\author{J{\"o}rg Brendle}
\thanks{First author partially supported by Grants-in-Aid for Scientific Research (C) 21540128
and (C) 24540126, Japan Society for the Promotion of Science, and by the Fields Institute for Research in Mathematical Sciences, Toronto, Canada}
\email{brendle@kurt.scitec.kobe-u.ac.jp}
\author{Dilip Raghavan}
\thanks{Second author partially supported by Grant-in-Aid for Scientific Research for JSPS
Fellows No.\ 23$\cdot$01017}
\address{Department of Mathematics \\
National University of Singapore\\
Singapore 119076}
\email{raghavan@math.nus.edu.sg}
\date{\today}
\subjclass[2010]{03E05, 03E17, 03E35, 03E65}
\keywords{maximal almost disjoint family, cardinal invariants}
\title{Bounding, splitting, and almost disjointness}
\def\polhk#1{\setbox0=\hbox{#1}{\ooalign{\hidewidth
    \lower1.5ex\hbox{`}\hidewidth\crcr\unhbox0}}}
\newtheorem{Theorem}{Theorem}
\newtheorem{Lemma}[Theorem]{Lemma}
\newtheorem{Cor}[Theorem]{Corollary}
\newtheorem{Obs}[Theorem]{Observation}
\newtheorem{Question}[Theorem]{Question}
\theoremstyle{definition}
\newtheorem{Def}[Theorem]{Definition}
\theoremstyle{remark}
\newtheorem{ssclaim}{Claim}[Theorem]
\newcommand{\forces}{\Vdash}
\newcommand{\restrict}{\upharpoonright}
\newcommand{\re}{{\upharpoonright}}
\newcommand{\forallbutfin}{{\forall}^{\infty}}
\newcommand{\existsinf}{{\exists}^{\infty}}
\renewcommand{\c}{\mathfrak{c}}
\renewcommand{\b}{\mathfrak{b}}
\newcommand{\s}{\mathfrak{s}}
\renewcommand{\a}{{\mathfrak{a}}}
\newcommand{\ac}{{\mathfrak{a}}_{closed}}
\renewcommand{\[}{\left[}
\renewcommand{\]}{\right]}
\renewcommand{\P}{\mathbb{P}}
\newcommand{\Q}{\mathbb{Q}}
\newcommand{\lc}{\left|}
\newcommand{\rc}{\right|}
\newcommand{\<}{\prec}
\newcommand\FIN{\mathrm{FIN}}
\newcommand\CH{\mathrm{CH}}
\newcommand{\BS}{{\omega}^{\omega}}
\DeclareMathOperator{\nor}{nor}
\DeclareMathOperator{\interior}{int}
\DeclareMathOperator{\dom}{dom}
\DeclareMathOperator{\succc}{succ}
\newcommand{\Pset}{\mathcal{P}}
\newcommand{\Mi}{\mathbb{M}}
\newcommand{\A}{{\mathscr{A}}}
\newcommand{\CC}{{\mathbb{C}}}
\newcommand{\GG}{{\mathcal{G}}}
\newcommand{\E}{{\mathcal{E}}}
\newcommand{\U}{{\mathcal{U}}}
\newcommand{\cube}{{\[\omega\]}^{\omega}}
\newcommand{\fin}{{{\[\omega\]}^{< \omega}}}
\newcommand{\I}{{\mathcal{I}}}
\newcommand{\F}{{\mathcal{F}}}
\newcommand{\FF}{{\mathbb{F}}}
\newcommand{\T}{{\mathcal{T}}}
\newcommand{\V}{{\mathbf{V}}}
\newcommand{\LLL}{{\mathbf{L}}}
\newcommand{\R}{\mathbb{R}}
\renewcommand{\aa}{{\mathfrak a}}
\newcommand{\bb}{{\mathfrak b}}
\newcommand{\cc}{{\mathfrak c}}
\newcommand{\dd}{{\mathfrak d}}
\newcommand{\hh}{{\mathfrak h}}
\newcommand{\rr}{{\mathfrak r}}
\renewcommand{\ss}{{\mathfrak s}}
\newcommand{\sss}{{\mathfrak s}}
\newcommand{\VVV}{{\mathbf V}}
\newcommand{\LL}{{\mathbb L}}
\newcommand{\MM}{{\mathbb M}}
\newcommand{\HHH}{{\mathcal H}}
\newcommand{\sub}{\subseteq}
\newcommand{\sem}{\setminus}
\newcommand{\twoom}{2^\omega}
\newcommand{\omlom}{\omega^{<\omega}}
\newcommand{\omom}{\omega^\omega}
\newcommand{\omloms}{[\omega]^{<\omega}}
\newcommand{\omoms}{[\omega]^\omega}
\newcommand{\ha}{\,{}\hat{}\,}
\newcommand{\la}{\langle}
\newcommand{\ra}{\rangle}
\newcommand{\closed}{{\mathrm{closed}}}
\newcommand{\Borel}{{\mathrm{Borel}}}
\newcommand{\rk}{{\mathrm{rk}}}
\newcommand{\stem}{{\mathrm{stem}}}
\begin{document}
\begin{abstract}
We investigate some aspects of bounding, splitting, and almost disjointness.
In particular, we investigate the relationship between the bounding number, the
closed almost disjointness number, splitting number, and the existence of
certain kinds of splitting families.
\end{abstract}
\maketitle

\section{Introduction} \label{sec:intro}
	The closed (and Borel) almost disjointness number was recently
introduced by Brendle and Khomskii~\cite{BKta}, and has received a lot of
attention.
We study the connections between this number and the notions of bounding and
splitting in this paper.
We start with some basic definitions. 
Recall that two infinite subsets $a$ and $b$ of $\omega$ are \emph{almost
disjoint or a.d.\@} if $a \cap b$ is finite.
We say that a family $\A$ of infinite subsets of $\omega$ is \emph{almost
disjoint or a.d.\@} if its members are pairwise almost disjoint.
A \emph{Maximal Almost Disjoint family, or MAD family} is an infinite a.d.\ family with the property that $\forall b \in \cube \exists a \in \A \[\lc a \cap b \rc = {\aleph}_{0}\]$.
The cardinal invariant $\a$ is the least $\kappa$ such that there is a MAD family of size $\kappa$.
Recall that $\b$ is the least size of a subset of $\langle \BS, {\leq}^{\ast} \rangle$ that does not have an upper bound.
It is well-known that $\b \leq \a$.
For $x, a \in \Pset(\omega)$, $x$ \emph{splits} $a$ if $\lc x \cap a \rc = \lc
\left( \omega \setminus x \right) \cap a \rc = \omega$.
$\F \subset \Pset(\omega)$ is called a \emph{splitting family} if $\forall a \in
\cube \exists x \in \F\[x \ \text{splits} \ a\]$.
$\s$ is the least size of a splitting family.
$\F \subset \Pset(\omega)$ is called an \emph{$\omega$-splitting family} if for
any collection $\{{a}_{n}: n \in \omega\} \subset \cube$, there exists $x \in
\F$ such that $\forall n \in \omega \[x \ \text{splits} \ {a}_{n}\]$.
${\s}_{\omega}$ is the least size of an $\omega$-splitting family.

Brendle and Khomskii~\cite{BKta} studied the possible descriptive
complexities of MAD families in certain forcing extensions of $\LLL$.
This led them to consider the following cardinal invariant.
\begin{Def} \label{def:ac}
	$\ac$ is the least $\kappa$ such that there are $\kappa$ closed subsets
of $\cube$ whose union is a MAD family in $\cube$. 
\end{Def}
Obviously, $\ac \leq \a$. Brendle and Khomskii showed in \cite{BKta} that $\ac$
behaves differently from $\a$ by showing that $\ac = {\aleph}_{1} < {\aleph}_{2}
= \b$ holds in the Hechler model.
Heuristically, the difference between $\a$ and $\ac$ may be seen by considering
how a witness to $\ac = {\aleph}_{1}$ can be destroyed in a forcing extension.
If $\A = {\bigcup}_{\alpha < {\omega}_{1}}{{X}_{\alpha}}$ is a witness to $\ac =
{\omega}_{1}$, where the ${X}_{\alpha}$ are closed subsets of $\cube$ coded in
the ground model, then to destroy $\A$ it is necessary to add a set $b \in
\cube$ which is almost disjoint from every member of every ${X}_{\alpha}$
\emph{even after} these codes have been reinterpreted in the forcing extension.
Interpreting a ground model code in a forcing extension results in a larger set
of reals.
This makes increasing $\ac$ harder than increasing $\a$, and this fact was
exploited by Brendle and Khomskii in their above mentioned result.
  
In Sections \ref{sec:blessthanaclosed} and \ref{sec:cccproof} we prove the
consistency of $\b < \ac$.
So taken together with the earlier result of Brendle and Khomskii, this
establishes the mutual independence of $\b$ and $\ac$.
Unsurprisingly, our proofs are closely modeled on the existing proofs of the
consistency of $\b < \a$. 
Historically there have been two seemingly distinct methods for producing a
model of $\b < \a$. 
In the first method, invented by Shelah in \cite{b<s}, the conditions consist of
a finite part followed by an infinite sequence of finite sets equipped with a
measure-like structure.
In the same paper, Shelah also used this method to produce the first consistency
proof of $\b < \s$.
In Section \ref{sec:blessthanaclosed}, we get a model of $\b < \ac$ using
Shelah's technique.  
In the second method, devised by Brendle in \cite{Br98}, an ultrafilter is
constructed as an ascending union of ${F}_{\sigma}$ filters, and then this
ultrafilter is diagonalized by the corresponding Mathias-Prikry forcing. 
One of the byproducts of the results in this paper is that these two techniques
are not so different after all. 
In Section \ref{sec:twostep} we show that Shelah's forcing from \cite{b<s} is
equivalent to a two step iteration of a countably closed forcing that adds an
ultrafilter which is a union of ${F}_{\sigma}$ filters from the ground model
succeeded by the Mathias-Prikry forcing for this generic ultrafilter.
Examining this proof one quickly realizes that for the Mathias-Prikry forcing
occurring in the second step of this iteration to have the right properties, it
is not necessary for the ultrafilter to be fully generic with respect to the
countably closed forcing occurring in the first step; it is sufficient for the
ultrafilter to meet a certain collection of $\c$ many dense sets.
With this realization, assuming $\CH$, it is possible to build a sufficiently
generic ultrafilter in the ground model itself.
In this way, we give a proof of the consistency of $\b < \ac$ by a finite
support iteration of Mathias-Prikry forcings in Section \ref{sec:cccproof} along
the lines of Brendle~\cite{Br98}.

In Section \ref{sec:club} we show that the existence of certain special types of
splitting families implies that $\ac = {\omega}_{1}$.
The existence of such special splitting families is closely related to the
statement ${\s}_{\omega} = {\omega}_{1}$.
It is unknown whether ${\s}_{\omega} = {\omega}_{1}$ implies that $\ac
= {\omega}_{1}$.
The result in Section \ref{sec:club} sheds some light on this, and moreover it
strengthens previous results of Raghavan and Shelah~\cite{RS12}, and Brendle and
Khomskii~\cite{BKta}. 

Finally in Section \ref{sec:clubandtail}, we separate the notions of club
splitting and tail splitting (see Definition \ref{def:clubandtail}). This
answers a question from \cite{GSta2}.     
\section{Consistency of ${\aleph}_{1} = \b < \ac$} \label{sec:blessthanaclosed}
In this section we show the consistency of $\b < \ac$ by a creature forcing.
The argument is similar to the one used by Shelah in \cite{b<s} and \cite{PIF}
to show the consistency of $\b < \a$, though we have to do some extra work to
make this argument work for $\ac$.
The notation and presentation in this section generally follow Abraham~\cite{Ab}.

Before plunging into the details, we make some remarks about the structure of the proof.
The final forcing will be a countable support (CS) iteration of proper forcings which does not add a dominating real.
At any stage, a specific witness to $\ac = {\omega}_{1}$, call it $\A$, is dealt with.
We first define a proper poset ${\P}_{0}$ which adds an unsplit real but does not add any dominating reals (and more; see Definition \ref{def:almost} and
following discussion).
The definition of ${\P}_{0}$ does not depend on $\A$, and it may or may not
destroy $\A$.
If it does, then we simply force with ${\P}_{0}$.
If it does not, we first add ${\omega}_{1}$ Cohen reals.
In the resulting extension we define a proper poset ${\P}_{1}$ which depends on
$\A$ and always destroys it.
Under the assumption that ${\P}_{0}$ (as defined in the extension) still does
not destroy $\A$, we prove that ${\P}_{1}$ does not add dominating reals (and
more), so that we may force with ${\P}_{1}$ to take care of $\A$.
	\begin{Def} \label{def:norm}
		$\FIN$ denotes $\fin \setminus \{ 0 \}$. Let $x \subset \omega$.
A function $\nor: {\[x\]}^{< \omega} \rightarrow \omega$ is a said to be a
\emph{norm on $x$} if
			\begin{enumerate}
				\item
					$\forall s \in {\[x\]}^{< \omega}
\[\nor(s) > 0 \implies \lc s \rc > 1\]$
				\item
					$\forall s, t \in {\[x\]}^{< \omega} \[s
\subset t \implies \nor(s) \leq \nor(t)\]$
				\item
					for any $s, {s}_{0}, {s}_{1} \in
{\[x\]}^{< \omega}$ and for any $n > 0$, if $\nor(s) \geq n$ and $s = {s}_{0}
\cup {s}_{1}$, then there exists $i \in 2$ such that $\nor({s}_{i}) \geq n - 1$.
			\end{enumerate}
	A \emph{creature} $c$ is a pair $\langle {s}_{c}, {\nor}_{c} \rangle$
such that ${s}_{c} \in \FIN$ and ${\nor}_{c}$ is a norm on ${s}_{c}$ such that
${\nor}_{c}({s}_{c}) > 0$. Given creatures $c$ and $d$, we write $c < d$ to mean
$\max({s}_{c}) < \min({s}_{d})$ and ${\nor}_{c}({s}_{c}) < {\nor}_{d}({s}_{d})$.

	A \emph{$0$-condition} $p$ is a pair $\langle {s}^{p}, \langle
{c}^{p}_{n}: n \in \omega \rangle \rangle$ such that
		\begin{enumerate}
			\item[(4)]
				${s}^{p} \in \fin$
			\item[(5)]
				for each $n \in \omega$, ${c}^{p}_{n}$ is a
creature and ${c}^{p}_{n} < {c}^{p}_{n + 1}$
			\item[(6)]
				$\forall m \in {s}^{p} \[m <
\min\left({s}_{{c}^{p}_{0}}\right)\]$
		\end{enumerate}	
	Henceforth, ${s}^{p}_{n}$ and ${\nor}^{p}_{n}$ will be used to denote
${s}_{{c}^{p}_{n}}$ and ${\nor}_{{c}^{p}_{n}}$ respectively. We may also omit
the superscript $p$ if it is clear from the context. For a $0$-condition $p$,
$\interior(p) = {\bigcup}_{n \in \omega}{{s}^{p}_{n}}$. Given $0$-conditions $p$
and $q$, $q \leq p$ means
		\begin{enumerate}
			\item[(7)]
				${s}^{q} \supset {s}^{p}$ and ${s}^{q} \setminus
{s}^{p} \subset \interior(p)$ 
			\item[(8)]
				Let ${n}_{0}$ be least such that $\forall m \geq
{n}_{0} \[{s}^{p}_{m} \cap {s}^{q} = 0\]$. There exists an interval partition
$\langle {i}_{n}: n \in \omega \rangle$ of $[{n}_{0}, \infty)$ (that is,
${i}_{0} = {n}_{0}$ and $\forall n \in \omega \[{i}_{n} < {i}_{n + 1}\]$) such
that $\forall n \in \omega \[{s}^{q}_{n} \subset {\bigcup}_{m \in [{i}_{n},
{i}_{n + 1})}{{s}^{p}_{m}}\]$.
			\item[(9)]
				for any $n \in \omega$, for any $t \subset
{s}^{q}_{n}$, if ${\nor}^{q}_{n}(t) > 0$, then there is $m \in \omega$ such that
${\nor}^{p}_{m}(t \cap {s}^{p}_{m}) > 0$. 
		\end{enumerate} 	
		For $0$-conditions $p$ and $q$, we say $q {\leq}_{0} p$ if $q
\leq p$ and ${s}^{p} = {s}^{q}$. For $n > 0$, $q {\leq}_{n} p$ if $q {\leq}_{0}
p$ and for all $m \leq n - 1$, ${c}^{q}_{m} = {c}^{p}_{m}$.  	
	\end{Def}
Observe that clause (8) is equivalent to saying that for each $n \in \omega$, ${s}^{q}_{n} \subset {\bigcup}_{m \in [{n}_{0}, \infty)}{{s}^{p}_{m}}$ and $\max\{m \in [{n}_{0}, \infty): {s}^{q}_{n} \cap {s}^{p}_{m} \neq 0 \} < \min\{m \in [{n}_{0}, \infty): {s}^{q}_{n + 1} \cap {s}^{p}_{m} \neq 0\}$. This is sometimes useful for checking clause (8). Also, it is easy to see that $\leq$
and ${\leq}_{n}$ are transitive for all $n$.
\begin{Lemma} \label{lem:fusion}
	Let $\langle {p}_{n}: n \in \omega \rangle$ be a sequence of
$0$-conditions and let $\langle {k}_{n}: n \in \omega\rangle$ be a sequence of
elements of $\omega \setminus \{0\}$ such that $\forall n \in \omega \[{k}_{n} <
{k}_{n + 1}\]$. Assume that ${p}_{n + 1} \; {\leq}_{{k}_{n}} \; {p}_{n}$. Define
$q$ as follows. ${s}^{q} = {s}^{{p}_{n}}$ for all $n$. For all $m \in [0,
{k}_{0})$, ${c}^{q}_{m} = {c}^{{p}_{0}}_{m}$. For each $m \in [{k}_{n}, {k}_{n +
1})$, ${c}^{q}_{m} = {c}^{{p}_{n + 1}}_{m}$. Then $q$ is a $0$-condition and for
each $n \in \omega$, $q \; {\leq}_{{k}_{n}} \; {p}_{n}$.
\end{Lemma}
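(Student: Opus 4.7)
The statement is a standard fusion lemma; I would verify directly that $q$ satisfies the clauses of Definition~\ref{def:norm}, namely (4)--(6) for being a $0$-condition, together with the equalities $s^q = s^{p_n}$ and $c^q_m = c^{p_n}_m$ (for $m < k_n$) and clauses (7)--(9) required for $q \leq_{k_n} p_n$. Two consequences of the hypothesis will be used throughout: telescoping identifications from $\leq_{k_j}$, and the observation that at each boundary $m = k_{j+1}$ the relation $p_{j+2} \leq_{k_{j+1}} p_{j+1}$ lets us view $s^q_{k_{j+1}-1}$ and $s^q_{k_{j+1}}$ as consecutive creatures of the same $0$-condition $p_{j+2}$.

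That $q$ is a $0$-condition reduces to clause~(5): within a block $[k_j,k_{j+1})$ both $c^q_m$ and $c^q_{m+1}$ come from $p_{j+1}$, so the ordering is inherited; at a boundary one has
$c^q_{k_{j+1}-1} = c^{p_{j+1}}_{k_{j+1}-1} = c^{p_{j+2}}_{k_{j+1}-1} < c^{p_{j+2}}_{k_{j+1}} = c^q_{k_{j+1}}$, using $p_{j+2} \leq_{k_{j+1}} p_{j+1}$ and the fact that $p_{j+2}$ is itself a $0$-condition. The equalities $s^q = s^{p_n}$ and $c^q_m = c^{p_n}_m$ for $m < k_n$ then fall out by straightforward telescoping, using that $\leq_k$ is transitive and that $\leq_{k'}$ strengthens $\leq_k$ whenever $k \leq k'$.

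Among clauses (7)--(9) of $q \leq p_n$, clause~(7) is immediate (with $n_0 = 0$, since $s^q = s^{p_n}$ and $s^{p_n} \cap \interior(p_n) = \emptyset$ by clause~(6) applied to $p_n$), and clause~(9) follows by transitivity of $\leq$: for $m \geq k_n$ write $c^q_m = c^{p_{j+1}}_m$ and invoke clause~(9) for $p_{j+1} \leq p_n$, while for $m < k_n$ the creatures even coincide. The main obstacle is clause~(8), where a single interval partition $\la i_m : m \in \omega \ra$ of $[0,\infty)$ must satisfy $s^q_m \subseteq \bigcup_{r \in [i_m, i_{m+1})} s^{p_n}_r$. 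I would set $i_m = m$ for $m < k_n$ and $i_m = \min\{r : s^q_m \cap s^{p_n}_r \neq \emptyset\}$ for $m \geq k_n$; this minimum exists because $s^q_m \subseteq \interior(p_n)$ (via clause~(8) applied to $p_{j+1} \leq p_n$). The nontrivial content is strict monotonicity $i_m < i_{m+1}$, i.e., that no $s^{p_n}_r$ is shared between $s^q_m$ and $s^q_{m+1}$. Within a single block $[k_j, k_{j+1})$ this is clause~(8) for $p_{j+1} \leq p_n$. At a boundary $m = k_{j+1} - 1$, I would use $p_{j+2} \leq_{k_{j+1}} p_{j+1}$ to rewrite $s^q_m = s^{p_{j+2}}_{k_{j+1}-1}$ so that $s^q_m$ and $s^q_{m+1} = s^{p_{j+2}}_{k_{j+1}}$ become consecutive creatures of $p_{j+2}$, and then apply clause~(8) for $p_{j+2} \leq p_n$; the initial boundary $m = k_n - 1$ is handled identically using $p_{n+1} \leq_{k_n} p_n$. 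This boundary step is precisely where the full strength of $p_{j+1} \leq_{k_j} p_j$, rather than merely $p_{j+1} \leq p_j$, is indispensable.
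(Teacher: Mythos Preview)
Your proposal is correct and follows essentially the same approach as the paper. The paper's proof is terser: for clause~(8) it simply observes that for every $m$ there is some $p_l \leq p_n$ with $c^q_m = c^{p_l}_m$ and $c^q_{m+1} = c^{p_l}_{m+1}$ (and then implicitly appeals to the equivalent reformulation of clause~(8) stated right after Definition~\ref{def:norm}), which is precisely the content of your block/boundary case analysis; your version just makes the interval partition explicit.
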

\begin{proof}
	First note that for any $n$, ${c}^{q}_{{k}_{n} - 1} =
{c}^{{p}_{n}}_{{k}_{n} - 1}$. So since ${p}_{n + 1} \; {\leq}_{{k}_{n}} \;
{p}_{n}$, ${c}^{q}_{{k}_{n} - 1} = {c}^{{p}_{n}}_{{k}_{n} - 1} = {c}^{{p}_{n +
1}}_{{k}_{n} - 1} < {c}^{{p}_{n + 1}}_{{k}_{n}} = {c}^{q}_{{k}_{n}}$. It follows
that for all $m$, ${c}^{q}_{m} < {c}^{q}_{m + 1}$, and so $q$ is a
$0$-condition.

	To check that $q \; {\leq}_{{k}_{n}} \; {p}_{n}$, note that ${s}^{q} =
{s}^{{p}_{n}}$, and that for all $m \in [0, {k}_{n})$, ${c}^{q}_{m} =
{c}^{{p}_{n}}_{m}$. So it is enough to check clauses (8) and (9) of Definition
\ref{def:norm}. For clause (9), simply note that for any $m \in [{k}_{n},
\infty)$, there is a $l > n$ such that ${c}^{q}_{m} = {c}^{{p}_{l}}_{m}$ and
that ${p}_{l} \leq {p}_{n}$. For clause (8) simply note that for any $m \in
\omega$, there is a ${p}_{l} \leq {p}_{n}$ such that ${c}^{q}_{m} =
{c}^{{p}_{l}}_{m}$ and ${c}^{q}_{m + 1} = {c}^{{p}_{l}}_{m + 1}$.  
\end{proof}
Fix $\langle {X}_{\alpha}: \alpha < {\omega}_{1}\rangle$ such that
	\begin{enumerate}
		\item
			${X}_{\alpha}$ is a non-empty closed subset of $\cube$
		\item
			$\A = {\bigcup}_{\alpha < {\omega}_{1}}{{X}_{\alpha}}$
is a MAD family.
	\end{enumerate}
We will be working with forcing extensions of the model in which the codes for
the ${X}_{\alpha}$ live. We adopt the standing convention that when we write
either ``${X}_{\alpha}$'' or ``$\A$'' while working inside such a model we mean
the set that is gotten by interpreting the codes in that model. For each $\alpha
< {\omega}_{1}$, let ${Y}_{\alpha}$ be the closure of ${X}_{\alpha}$ in
$\Pset(\omega)$. Note that ${Y}_{\alpha}$ is compact and that ${Y}_{\alpha}
\setminus {X}_{\alpha} \subset \fin$.
\begin{Def} \label{def:filter}
	Suppose $p$ is a $0$-condition. Define ${A}_{p} = \{s \in \fin: \exists
n \in \omega \[{\nor}_{n}(s \cap {s}_{n}) > 0\]\}$. Let ${\F}_{p}$ be the filter
on $\omega$ generated by the set
		\begin{align*}
			{C}_{p} = \{\omega \setminus a: a \subset \omega \wedge
\neg \exists s \in {A}_{p} \[s \subset a\]\}
		\end{align*}
\end{Def}
All filters on $\omega$ are assumed to contain the Fr{\'e}chet filter. Note that
${C}_{p}$ is a closed subset of $\Pset(\omega)$ and so ${\F}_{p}$ is
${F}_{\sigma}$ in $\Pset(\omega)$. Note also that for any $i \in \omega$, if
$\omega \setminus i \subset {a}_{0} \cup \dotsb \cup {a}_{k}$ and $n \in \omega$
is such that $i \cap {s}_{n} = 0$ and ${\nor}_{n}({s}_{n}) > k + 1$, then for
some $0 \leq l \leq k$ ${\nor}_{n}({a}_{l} \cap {s}_{n}) > 0$, whence $\omega
\setminus {a}_{l} \notin {C}_{p}$. It follows that ${\F}_{p}$ is a proper
filter. Note that for any $s \in {A}_{p}$, $s \cap \interior(p) \neq 0$, and so
$\interior(p) \in {\F}_{p}$. 

Consider the forcing extension of $\V$ obtained by adding ${\omega}_{1}$ Cohen
reals. For each $\delta \leq {\omega}_{1}$, let ${\V}_{\delta}$ denote the
extension by the first $\delta$ many of these. We assume that $\A$ remains MAD
in ${\V}_{{\omega}_{1}}$.
\begin{Lemma} \label{lem:p}
	In ${\V}_{{\omega}_{1}}$, let $\F$ be any ${F}_{\sigma}$ filter and
suppose that $\GG$, the filter generated by $\F \cup \F(\A)$, is a proper
filter. Then $\GG$ is ${P}^{+}$.
\end{Lemma}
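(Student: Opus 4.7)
The plan is to combine the Mazur representation of ${F}_{\sigma}$ filters by lower semicontinuous submeasures with a diagonalization argument that exploits the Cohen structure of ${\V}_{{\omega}_{1}}$ together with the MAD-ness of $\A$.

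By Mazur's theorem, represent $\F$ by a lower semicontinuous submeasure $\phi$: $F \in \F$ iff $\phi(\omega \setminus F) < \infty$. Then $Y$ is $\F$-positive iff $\phi(Y) = \infty$, and $Y$ is $\GG$-positive iff $\phi(Y \setminus \bigcup {\A}_{0}) = \infty$ for every finite ${\A}_{0} \subseteq \A$. Properness of $\GG$ immediately yields $\phi(F \setminus \bigcup {\A}_{0}) = \infty$ for every $F \in \F$ and finite ${\A}_{0}$ (otherwise $F \setminus \bigcup {\A}_{0}$ would be $\F$-small, hence covered mod finite by the complement of some $F' \in \F$, contradicting properness). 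Let $\langle {X}_{n}: n \in \omega \rangle$ be a decreasing sequence of $\GG$-positive sets. The basic construction, using only $\phi$-infiniteness of the ${X}_{n}$, is to pick ${s}_{k} \subseteq {X}_{k}$ finite inductively with $\max {s}_{k} < \min {s}_{k+1}$ and $\phi({s}_{k}) > k$; setting $X = {\bigcup}_{k} {s}_{k}$ yields a pseudo-intersection with $\phi(X) = \infty$, i.e., an $\F$-positive pseudo-intersection.

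The critical step is upgrading this to $\GG$-positivity, that is, arranging $\phi(X \setminus \bigcup {\A}_{0}) = \infty$ for \emph{every} finite ${\A}_{0} \subseteq \A$, despite $|\A| = {\aleph}_{1}$. Fix $\delta < {\omega}_{1}$ with $\langle {X}_{n} \rangle$ and $\phi$ in ${\V}_{\delta}$, and perform the construction in ${\V}_{\delta + 1} = {\V}_{\delta}[{c}_{\delta}]$, using the next Cohen real ${c}_{\delta}$ to drive the diagonalization. Finite ${\A}_{0}$ whose members all lie in ${\V}_{\delta + 1}$ are handled by meeting dense sets in Cohen forcing: for each such ${\A}_{0}$ and each $l \in \omega$, $\GG$-positivity of the ${X}_{n}$ guarantees that the set of conditions forcing ``some ${s}_{k}$ satisfies $\phi({s}_{k} \setminus \bigcup {\A}_{0}) > l$'' is dense, and suitable bookkeeping with ${c}_{\delta}$ meets all these requirements simultaneously (using $\CH$ in $\V$ to keep the cardinality controlled). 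Finite ${\A}_{0}$ with some member added after stage $\delta + 1$ are handled by the fact that those members are Cohen-generic over ${\V}_{\delta + 1}$, which contains $X$ and $\phi$, so $\phi(X \setminus a) = \infty$ for such $a$ follows automatically from lower semicontinuity of $\phi$ together with Cohen genericity.

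The \textbf{main obstacle} is rigorously linking Cohen genericity of later members of $\A$ to $\phi$-infiniteness on complements, and diagonalizing against the uncountably many ${\A}_{0} \subseteq {\V}_{\delta + 1}$ within a single Cohen extension. The former uses crucially that $\A$ remains MAD in ${\V}_{{\omega}_{1}}$, which constrains new $a \in \A$ to lie in the closed sets ${X}_{\alpha}$ coded in $\V$ and so permits uniform $\phi$-estimates; the latter relies on a careful fusion-style bookkeeping that interleaves the construction of $X$ with diagonalization against a cofinal countable subfamily of $[\A \cap {\V}_{\delta + 1}]^{< \omega}$ chosen via reflection.
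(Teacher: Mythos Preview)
Your proposal has two genuine gaps, and together they miss the key idea of the paper's argument.

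First, the dichotomy you set up is false. You split finite $\A_0 \subset \A$ into those contained in $V_{\delta+1}$ and those with a member ``added after stage $\delta+1$,'' and you assert that any such later member is Cohen-generic over $V_{\delta+1}$. But elements of $\A$ are simply points of the closed sets $X_\alpha$, whose codes live in the ground model $V$; an $a \in X_\alpha$ that happens not to lie in $V_{\delta+1}$ is an arbitrary real in $V_{\omega_1}\setminus V_{\delta+1}$, not a Cohen real over $V_{\delta+1}$. So the claim that $\phi(X\setminus a)=\infty$ ``follows automatically from lower semicontinuity of $\phi$ together with Cohen genericity'' has no basis. Even the handling of the first half of the dichotomy is incomplete: there are uncountably many finite $\A_0 \subset \A\cap V_{\delta+1}$, and the suggestion to ``reflect'' to a cofinal countable subfamily does not make sense without further argument --- there is no such cofinal countable family, and no reflection principle is available here.

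Second, you are missing the mechanism that actually makes the uncountable diagonalization go through. The paper does not try to sort elements of $\A$ by the stage at which they appear. Instead it uses that each $X_\alpha$ has compact closure $Y_\alpha$ in $\mathcal{P}(\omega)$ and that $\F$ is a countable union of compact sets $\T_n$. For each finite tuple $\alpha_0,\ldots,\alpha_k<\omega_1$ and $n,m,l\in\omega$, compactness of $Y_{\alpha_0}\times\cdots\times Y_{\alpha_k}\times \T_n$ gives a single finite $s\subset b_m\setminus l$ that works uniformly for all $(a_0,\ldots,a_k,c)$ in this product. The resulting dense sets are definable from $\langle b_n\rangle$, the code for $\langle \T_n\rangle$, and the codes for the $Y_{\alpha_i}$ --- all of which lie in $V_\delta$ (the latter because the $X_\alpha$ are coded in $V$ itself). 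Hence every one of these (uncountably many) dense sets is an element of $V_\delta$, and a single $(V_\delta,\P)$-generic pseudo-intersection, which exists in $V_{\omega_1}$ because the later Cohen reals are truly generic over $V_\delta$, meets all of them at once. This compactness-plus-absoluteness step is the heart of the proof, and your proposal does not contain it.
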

\begin{proof}
	Work in ${\V}_{{\omega}_{1}}$. Fix $\langle {b}_{n}: n \in
\omega\rangle$ such that ${b}_{n + 1} \subset {b}_{n}$ and each ${b}_{n} \in
{\GG}^{+}$. Write $\F = {\bigcup}_{n \in \omega}{{\T}_{n}}$, where each
${\T}_{n}$ is a compact subset of $\Pset(\omega)$. Fix $\delta < {\omega}_{1}$
such that $\langle {b}_{n}: n \in \omega\rangle \in {\V}_{\delta}$ and (the code
for) $\langle {\T}_{n}: n \in \omega \rangle \in {\V}_{\delta}$. In
${\V}_{\delta}$, observe that for any ${\alpha}_{0}, \dotsc, {\alpha}_{k} <
{\omega}_{1}$, any $n \in \omega$, any $\left( {a}_{0}, \dotsc {a}_{k}\right)
\in {Y}_{{\alpha}_{0}} \times \dotsb \times {Y}_{{\alpha}_{k}}$, any $c \in
{\T}_{n}$, and any $m \in \omega$, ${b}_{m} \cap c \cap (\omega \setminus
{a}_{0}) \cap \dotsb \cap (\omega \setminus {a}_{k})$ is infinite. Therefore, by
a standard compactness argument, for each ${\alpha}_{0}, \dotsc, {\alpha}_{k} <
{\omega}_{1}$, $n, m, l \in \omega$, there is a finite set $s \subset {b}_{m}
\setminus l$ such that
	\begin{align*}
		\forall \left( {a}_{0}, \dotsc {a}_{k}\right) \in
{Y}_{{\alpha}_{0}} \times \dotsb \times {Y}_{{\alpha}_{k}} \forall c \in
{\T}_{n} \[s \cap c \cap (\omega \setminus {a}_{0}) \cap \dotsb \cap (\omega
\setminus {a}_{k}) \neq 0\]. \tag{$\ast$}  
	\end{align*}   
Note that $(\ast)$ is absolute between ${\V}_{\delta}$ and
${\V}_{{\omega}_{1}}$. Still in ${\V}_{\delta}$, consider the natural poset $\P$
for adding a pseudo-intersection to $\langle {b}_{n}: n \in \omega\rangle$ using
finite conditions. $\P$ is forcing equivalent to Cohen forcing. So in
${\V}_{{\omega}_{1}}$, there is a set $b$ which is $({\V}_{\delta}, \P)$
generic. Clearly, $\forall n \in \omega\[b \; {\subset}^{\ast} \; {b}_{n}\]$.
Also, by genericity, for each ${\alpha}_{0}, \dotsc, {\alpha}_{k} <
{\omega}_{1}$, $n, l \in \omega$, there is $s \subset b \setminus l$ such that
$(\ast)$ holds. Thus $b \in {\GG}^{+}$.   
\end{proof}
\begin{Def} \label{def:utree}
	For an ultrafilter $\U$, a $\U$-tree is a tree $T \subset {\omega}^{<
\omega}$ such that $\forall s \in T \[{\succc}_{T}(s) \in \U\]$ and $\forall f
\in [T] \forall n \in \omega \[f(n) < f(n + 1)\]$. Thus each $f \in [T]$
determines an element of $\cube$ in a natural way. We will often confuse these
below.
\end{Def}
\begin{Lemma} \label{lem:ad}
	In ${\V}_{{\omega}_{1}}$, suppose that $\F$ is a ${F}_{\sigma}$ filter
such that $\GG$, the filter generated by $\F \cup \F(\A)$, is proper. Suppose $b
\in {\GG}^{+}$. Then for each ${\alpha}_{0}, \dotsc, {\alpha}_{k} <
{\omega}_{1}$, there is a $c \in {\[b\]}^{\omega}$ such that $c \in {\GG}^{+}$
and $\forall ({a}_{0}, \dotsc, {a}_{k}) \in {X}_{{\alpha}_{0}} \times \dotsb
\times {X}_{{\alpha}_{k}} \[\lc ({a}_{0} \cup \dotsb \cup {a}_{k}) \cap c \rc <
\omega \]$.
\end{Lemma}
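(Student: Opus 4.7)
The plan is to follow the template of Lemma~\ref{lem:p}'s proof. Let $Y = Y_{\alpha_0} \cup \cdots \cup Y_{\alpha_k}$, compact in $\Pset(\omega)$ as a finite union of compacts. Write $\F = \bigcup_{n} \T_{n}$ and fix $\delta < \omega_{1}$ large enough that $b$ and $\langle \T_{n} : n \in \omega \rangle$ lie in $\V_\delta$; since the $\alpha_{i}$'s are countable ordinals, $Y \in \V_\delta$ as well. Observe that the filter generated by $\F \cup \F(\A) \cup \{\omega \setminus d : d \in Y\}$ is proper: because $Y \cap \cube \subset \A$ and $Y \setminus \cube \subset \fin$, any finite intersection of generators reduces modulo a finite set to an element of the proper filter $\GG$, for which $b$ is positive.

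The analog of the compactness claim $(\ast)$ from the proof of Lemma~\ref{lem:p}, established in $\V_\delta$, is the following: for every $\beta_{0}, \ldots, \beta_{j} < \omega_{1}$ and every $n, l \in \omega$, there is a finite $s \subset b \setminus l$ such that for every $F \in \T_{n}$, every $\vec e \in Y_{\beta_{0}} \times \cdots \times Y_{\beta_{j}}$, and every $\vec d \in Y_{\alpha_{0}} \times \cdots \times Y_{\alpha_{k}}$, the set $s \cap F \cap (\omega \setminus e_{0}) \cap \cdots \cap (\omega \setminus e_{j}) \cap (\omega \setminus d_{0}) \cap \cdots \cap (\omega \setminus d_{k})$ is nonempty. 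This is a standard compactness argument, and the resulting statement is $\Pi^{0}_{1}$ in the $\V_\delta$-codes of the compact parameters, hence absolute to $\V_{\omega_{1}}$. In $\V_\delta$ define a $\sigma$-centered Mathias-like poset $\P$ whose conditions are pairs $(s_{p}, K_{p})$ with $s_{p} \in \fin$, $s_{p} \subset b$, and $K_{p} \in [Y]^{<\omega}$, ordered by $q \leq p$ iff $s_{q} \supseteq s_{p}$, $K_{q} \supseteq K_{p}$, and $s_{q} \setminus s_{p} \subset b \setminus \bigcup K_{p}$. Dense sets of the form $\{p : d \in K_{p}\}$ for $d \in Y$ force $|c \cap d| < \omega$, while the compactness claim combined with genericity and absoluteness forces $c \in \GG^{+}$.

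The principal obstacle is that $\P$ has cardinality $\mathfrak{c}^{\V_\delta}$ rather than being countable, so Cohen-equivalence in $\V_\delta$ does not follow directly as in Lemma~\ref{lem:p}'s proof. I expect to handle this by exploiting the finitely branching tree structure on $Y$ afforded by the projections $\pi_{N} : Y \to \Pset(N)$ (each $\pi_{N}(Y)$ is finite): restrict the avoidance commitments $K_{p}$ to finite subsets of the countable tree $T = \bigcup_{N} \pi_{N}(Y)$ together with a fixed system of canonical representatives $\{a_{t} : t \in T\} \subset Y$, which yields a countable Cohen-equivalent subforcing in which a $\V_\delta$-generic exists in $\V_{\omega_{1}}$. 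The remaining delicate step is to lift almost disjointness of $c$ from the countable family of canonical representatives to almost disjointness from every $a \in X_{\alpha_{0}} \cup \cdots \cup X_{\alpha_{k}}$; this lift rests on the finite branching of $T$ together with the $\Pi^{0}_{1}$ character of the dense sets supplied by the compactness claim, and is the technical heart of the argument.
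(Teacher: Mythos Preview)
Your approach diverges substantially from the paper's, and the step you flag as ``the technical heart'' is where it breaks down. The paper does not build $c$ by a Cohen-style construction in ${\V}_{\delta}$. Instead it lets $\E$ be the filter generated by $\GG \cup \{b\}$ and $\I = {\E}^{\ast}$; Lemma~\ref{lem:p} guarantees that $\Pset(\omega)/\I$ adds no reals and produces a P-point $\U \supset \E$. Blass's dichotomy for analytic sets and $\U$-trees, applied to each $\I({X}_{{\alpha}_{i}})$, then yields ${c}_{i} \in \U \cap {[b]}^{\omega}$ almost disjoint from every member of ${X}_{{\alpha}_{i}}$, and $c = \bigcap_{i\leq k} {c}_{i} \in \U$ is the desired set. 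Since no reals were added, $c \in {\V}_{{\omega}_{1}}$.

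The gap in your proposal is precisely the lift you defer. Almost disjointness of $c$ from a countable family $\{{a}_{t} : t \in T\}$ dense in $Y$ does \emph{not} entail almost disjointness from every $a \in Y$: for $a \in Y$ not among the representatives, knowing $|c \cap {a}_{{t}_{n}}| < \omega$ for each $n$ (where ${t}_{n} = a \cap n$) gives no bound on $|c \cap a|$, because ${a}_{{t}_{n}}$ and $a$ may diverge arbitrarily above level $n$. Neither finite branching of $T$ nor the $\Pi^{0}_{1}$ character of your compactness claim helps here: the latter is designed to certify $c \in {\GG}^{+}$, not to enforce uniform almost disjointness across an uncountable compact family. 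The dense sets $\{p : d \in {K}_{p}\}$ you need for almost disjointness are genuinely uncountably many, and there is no evident way to collapse them to countably many $\Pi^{0}_{1}$ conditions that a Cohen-equivalent subforcing could meet. This obstacle is exactly what drives the paper to the P-point and Blass's theorem rather than a direct finite-condition forcing.
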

\begin{proof}
	Let $\E$ be the filter generated by $\GG \cup \{b\}$, and let $\I$ be
${\E}^{*}$, the dual ideal. Consider the forcing with $\Pset(\omega) / \I$. By
Lemma \ref{lem:p}, this forcing does not add any reals and adds a P-point $\U
\supset \E$. Work in ${\V}^{\Pset(\omega) / \I}_{{\omega}_{1}}$. Fix $0 \leq i
\leq k$ and let $\I({X}_{{\alpha}_{i}})$ be the ideal generated by
${X}_{{\alpha}_{i}}$. This is analytic. By a theorem of Blass~\cite{Bl}, there
is a $\U$-tree $T$ such that either $[T] \subset \I({X}_{{\alpha}_{i}})$ or $[T]
\cap \I({X}_{{\alpha}_{i}}) = 0$. As $\U$ is a P-point, without loss of
generality, there is a set ${c}_{i} \in {\[b\]}^{\omega} \cap \U$ such that
$\forall s \in T \[ {\succc}_{T}(s) {=}^{*} {c}_{i} \]$. We claim that $\forall
a \in {X}_{{\alpha}_{i}} \[ \lc a \cap {c}_{i} \rc < \omega\]$. Suppose not.
Then it is possible to choose $f \in [T]$ such that $f \in
\I({X}_{{\alpha}_{i}})$. On the other hand, ${c}_{i} \in {\I}^{+}(\A)$. As
$\Pset(\omega) / \I$ adds no new reals, $\A$ is 
MAD in ${\V}^{\Pset(\omega) / \I}_{{\omega}_{1}}$, and so $\existsinf a \in \A
\[\lc a \cap {c}_{i} \rc = \omega\]$. But then it is possible to choose $f \in
[T]$ such that $\existsinf a \in \A \[\lc a \cap f \rc = \omega\]$, whence $f
\notin \I({X}_{{\alpha}_{i}})$. This contradicts the choice of $T$. Now, put $c
= {\bigcap}_{0 \leq i \leq k}{{c}_{i}}$ . $c \in {\[b\]}^{\omega} \cap \U$.
Therefore, $c \in {\GG}^{+}$. Also, it is clear that $\forall ({a}_{0}, \dotsc,
{a}_{k}) \in {X}_{{\alpha}_{0}} \times \dotsb \times {X}_{{\alpha}_{k}} \[\lc
({a}_{0} \cup \dotsb \cup {a}_{k}) \cap c \rc < \omega \]$. Since $\Pset(\omega)
/ \I$ did not add any reals, $c \in {\V}_{{\omega}_{1}}$, and we are done. 
\end{proof}
\begin{Def} \label{def:1condition}
	A $0$-condition $p$ is said to be a \emph{$1$-condition} if for each $a
\in \I(\A)$ and for each $k \in \omega$, there is $n \in \omega$ such that
${\nor}_{n}({s}_{n} \setminus a) \geq k$.
\end{Def}
The next lemma is the major new ingredient in the proof.
Most of the extra work needed to deal with $\ac$ rather than $\a$ is contained
in it.
\begin{Lemma} \label{lem:main}
	Work in ${\V}_{{\omega}_{1}}$. Let $p$ be a $0$-condition and let $c
\subset \omega$. Then the following are equivalent:
		\begin{enumerate}
			\item
				For every ${\alpha}_{0}, \dotsc, {\alpha}_{k} <
{\omega}_{1}$, there exists a $1$-condition $q$ such that $q \; {\leq}_{0} \;
p$, $\forall ({a}_{0}, \dotsc, {a}_{k}) \in {X}_{{\alpha}_{0}} \times \dotsb
\times {X}_{{\alpha}_{k}}\[\lc \interior(q) \cap ({a}_{0} \cup \dotsb \cup
{a}_{k}) \rc < \omega\]$, and $\interior(q) \subset c$.
			\item
				The filter generated by ${\F}_{p} \cup \F(\A)
\cup \{c\}$ is proper.
		\end{enumerate}
\end{Lemma}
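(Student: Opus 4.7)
The plan is to prove each implication separately, with $(1) \Rightarrow (2)$ being essentially formal and $(2) \Rightarrow (1)$ carrying the main technical content.

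For $(1) \Rightarrow (2)$, I first observe that $q \leq p$ implies ${\F}_{q} \supset {\F}_{p}$. Indeed, if $\omega \setminus a \in {\F}_{p}$ then no $s \in {A}_{p}$ is $\subset a$; if some $s' \in {A}_{q}$ were $\subset a$, choose $n$ with ${\nor}^{q}_{n}(s' \cap {s}^{q}_{n}) > 0$ and set $t := s' \cap {s}^{q}_{n}$. Clause~(9) of Definition~\ref{def:norm} provides $m$ with ${\nor}^{p}_{m}(t \cap {s}^{p}_{m}) > 0$, so $u := t \cap {s}^{p}_{m} \in {A}_{p}$ is a subset of $a$, a contradiction. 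Granted this, to verify that the filter generated by ${\F}_{p} \cup \F(\A) \cup \{c\}$ is proper it suffices to check that $f \cap c \setminus ({b}_{0} \cup \cdots \cup {b}_{l})$ is infinite for any $f \in {\F}_{p}$ and ${b}_{0}, \dotsc, {b}_{l} \in \A$. Picking ${\beta}_{i} < {\omega}_{1}$ with ${b}_{i} \in {X}_{{\beta}_{i}}$ and applying~(1) to $({\beta}_{0}, \dotsc, {\beta}_{l})$ yields $q \; {\leq}_{0} \; p$ with $\interior(q) \subset c$ and $\interior(q) \cap ({b}_{0} \cup \cdots \cup {b}_{l})$ finite; since $\interior(q) \in {\F}_{q} \supset {\F}_{p}$, the set $f \cap \interior(q)$ is infinite, and the claim follows.

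For $(2) \Rightarrow (1)$, fix ${\alpha}_{0}, \dotsc, {\alpha}_{k} < {\omega}_{1}$. Apply Lemma~\ref{lem:ad} to $c$ and this tuple to extract $c' \in {\[c\]}^{\omega} \cap {\GG}^{+}$ with $c' \cap ({a}_{0} \cup \cdots \cup {a}_{k})$ finite for every $({a}_{0}, \dotsc, {a}_{k}) \in {X}_{{\alpha}_{0}} \times \cdots \times {X}_{{\alpha}_{k}}$. The creatures of $q$ will be chosen along an increasing sequence $({m}_{n})$: take ${s}^{q}_{n}$ to be a suitable subset of $c' \cap {s}^{p}_{{m}_{n}}$ and ${\nor}^{q}_{n}$ to be ${\nor}^{p}_{{m}_{n}}$ restricted to ${\[{s}^{q}_{n}\]}^{< \omega}$. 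Setting ${s}^{q} := {s}^{p}$, clauses~(8) and~(9) of Definition~\ref{def:norm} hold automatically from ${m}_{n} < {m}_{n + 1}$ and the very definition of ${\nor}^{q}_{n}$, while $\interior(q) \subset c' \subset c$ combined with the almost disjointness of $c'$ from the ${X}_{{\alpha}_{i}}$-products delivers the remaining clauses of~(1).

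\emph{Main obstacle.} Two properties must be arranged in concert. First, ${c}^{q}_{n} < {c}^{q}_{n + 1}$ demands ${\nor}^{p}_{{m}_{n}}(c' \cap {s}^{p}_{{m}_{n}}) \to \infty$, but $c' \in {\F}_{p}^{+}$ alone only yields positive norm along infinitely many creatures of $p$ (by the observation after Definition~\ref{def:filter}), so a refinement of $c'$ is needed. I would introduce the ``norm $\geq K$'' variants ${\F}_{p}^{(K)}$ of ${\F}_{p}$, whose generators are complements of sets $a$ satisfying ${\nor}^{p}_{n}(a \cap {s}^{p}_{n}) \leq K$ for every $n$, and iteratively thin $c'$ by combining the ${P}^{+}$-property of $\GG$ (Lemma~\ref{lem:p}) with repeated applications of Lemma~\ref{lem:ad} to ensure $c' \in {({\F}_{p}^{(K)})}^{+}$ for every $K$. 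Second, $q$ must be a $1$-condition, which is a universal demand over $\I(\A)$. The verification proceeds by contrapositive through clause~(3) of Definition~\ref{def:norm}: if ${\nor}^{q}_{n}({s}^{q}_{n} \setminus a) < j$ for all $n$ and some fixed $a \in \I(\A)$, $j \in \omega$, the growing norms of the ${s}^{q}_{n}$ force ${\nor}^{p}_{{m}_{n}}({s}^{q}_{n} \cap a) \to \infty$; iterated splits along a finite witness $a \; {\subset}^{\ast} \; {b}_{0} \cup \cdots \cup {b}_{\ell}$ with ${b}_{i} \in \A$ then localize the blow-up to a single $b \in \A$. The principal technical difficulty is arranging the construction of $c'$ so that this localization contradicts the a.d.\@ demands built into $c'$; the bookkeeping that simultaneously services the norm boost and the $\I(\A)$-universal quantifier---leveraging the closed structure of the ${X}_{\alpha}$ and the absoluteness arguments underlying Lemma~\ref{lem:p}---is the delicate point.
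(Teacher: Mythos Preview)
Your $(1)\Rightarrow(2)$ is essentially the paper's argument (with the minor slip that you should take $\omega\setminus a$ in ${C}_{p}$ rather than in ${\F}_{p}$, but the filter inclusion ${\F}_{p}\subset{\F}_{q}$ is correct and this direction goes through).

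The gap is in $(2)\Rightarrow(1)$. Your plan is to produce a single set $c'$ and then let ${s}^{q}_{n}\subset c'\cap{s}^{p}_{{m}_{n}}$ with ${\nor}^{q}_{n}$ the restriction of ${\nor}^{p}_{{m}_{n}}$. You correctly identify that the difficulty is making $q$ a $1$-condition, and you propose to verify this by contrapositive: if ${\nor}^{q}_{n}({s}^{q}_{n}\setminus a)$ stays bounded for some $a\in\I(\A)$, then by repeated splitting some fixed $b\in\A$ has ${\nor}^{p}_{{m}_{n}}(c'\cap{s}^{p}_{{m}_{n}}\cap b)\to\infty$, so $c'\cap b$ is infinite. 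But this is a contradiction only if $c'$ was arranged to be almost disjoint from \emph{that} $b$, and $b$ could be any element of $\A$. Since $\A$ is MAD, no single $c'\in\cube$ can be almost disjoint from all of $\A$, and Lemma~\ref{lem:ad} only buys you finitely many ${X}_{{\alpha}_{i}}$'s at a time. So the ``bookkeeping'' you allude to cannot close: you would need $c'$ to anticipate all $\omega_1$ many potential $b$'s, which is impossible.

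The paper resolves this by two moves you are missing. First, it does \emph{not} restrict ${s}^{q}_{n}$ to a single creature of $p$; instead it introduces a new norm $\nor$ on $\omega$ \emph{induced by} ${A}_{p}$ (defined so that $\nor(s)\geq 1$ iff $s\in{A}_{p}$, with the usual halving clause for higher values), and proves that every $b\in{\GG}^{+}$ contains finite sets of arbitrarily high $\nor$-value. This lets ${s}^{q}_{n}$ span several ${s}^{p}_{m}$'s while still satisfying clause~(9). Second, and more fundamentally, the paper exploits the fact that we work in ${\V}_{{\omega}_{1}}$: once $d\subset\interior(p)\cap c$ from Lemma~\ref{lem:ad} and $\nor$ are fixed in some ${\V}_{\delta}$, one defines a Cohen-like poset $\P$ of finite approximations to $\langle{s}^{q}_{n}:n\in\omega\rangle$, checks by compactness of the ${Y}_{\beta}$'s that for each tuple ${\beta}_{0},\dotsc,{\beta}_{l}$ and each $n$ there is a dense set in $\P$ adding some ${s}^{q}_{i}$ with a subset of $\nor$-value $\geq n$ disjoint from the given $a_j$'s, and then takes $q$ to be built from a $({\V}_{\delta},\P)$-generic function available in ${\V}_{{\omega}_{1}}$. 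It is this Cohen genericity that lets $q$ meet the $\omega_1$ many requirements coming from $\I(\A)$; no single real $c'$ can do that job.
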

\begin{proof}
	Assume (1), and suppose for a contradiction that there exist ${b}_{0},
\dotsc, {b}_{l} \in {C}_{p}$, ${\alpha}_{0}, \dotsc, {\alpha}_{k} <
{\omega}_{1}$, $({a}_{0}, \dotsc, {a}_{k}) \in {X}_{{\alpha}_{0}} \times \dotsb
\times {X}_{{\alpha}_{k}}$, and $i \in \omega$ such that $c \cap \interior(p)
\cap {b}_{0} \cap \dotsb \cap {b}_{l} \cap (\omega \setminus {a}_{0}) \cap
\dotsb \cap (\omega \setminus {a}_{k}) \subset i$. Applying (1), find $q \;
{\leq}_{0} \; p$ such that $\interior(q) \subset c$, and $\interior(q) \cap
({a}_{0} \cup \dotsb \cup {a}_{k})$ is finite. Find $n \in \omega$ such that
${\nor}^{q}_{n}({s}^{q}_{n}) > l + 1$, $i \cap {s}^{q}_{n} = 0$, and $({a}_{0}
\cup \dotsb \cup {a}_{k}) \cap {s}^{q}_{n} = 0$. Since ${s}^{q}_{n} \subset
\interior(p) \cap c$, it follows that ${s}^{q}_{n} \subset (\omega \setminus
{b}_{0}) \cup \dotsb \cup(\omega \setminus {b}_{l})$. But then, for some $0 \leq
j \leq l$, ${\nor}^{q}_{n}((\omega \setminus {b}_{j}) \cap {s}^{q}_{n}) > 0$. So
there must be $m \in \
omega$ such that ${\nor}^{p}_{m}({s}^{p}_{m} \cap (\omega \setminus {b}_{j})
\cap {s}^{q}_{n}) > 0$, whence $(\omega \setminus {b}_{j}) \cap {s}^{q}_{n} \in
{A}_{p}$. This, however, means that ${b}_{j} \notin {C}_{p}$, a contradiction.

	Next, suppose that ${\F}_{p} \cup \F(\A) \cup \{c\}$ generates a proper
filter. We will prove (1). Let $\GG$ denote the filter generated by ${\F}_{p}
\cup \F(\A) \cup \{c\}$. First notice the following things about ${A}_{p}$. If
$s \in {A}_{p}$, then $\lc s \rc > 1$. Next, if $s \subset t$, and $s \in
{A}_{p}$, then $t \in {A}_{p}$. Finally, if $b \in {\GG}^{+}$, then $\exists s
\in {A}_{p} \[s \subset b\]$. Now, we define \emph{the norm induced by
${A}_{p}$}, $\nor: \fin \rightarrow \omega$ by the following clauses:
	\begin{itemize}
		\item
			$\nor(s) \geq 0$, for every $s \in \fin$
		\item
			$\nor(s) \geq 1$ iff $s \in {A}_{p}$
		\item
			for $n > 1$, $\nor(s) \geq n$ iff for every ${s}_{0},
{s}_{1}$ such that $s = {s}_{0} \cup {s}_{1}$, there is $i \in 2$ such that
$\nor({s}_{i}) \geq n - 1$
		\item
			$\nor(s) = \max\{n \in \omega: \nor(s) \geq n\}$.
	\end{itemize}  
It is easy to check that $\nor$ is well defined and is a norm on $\omega$. Next,
we check by induction on $n \in \omega$ that for any $b \in {\GG}^{+}$, $\exists
s \subset b \[\nor(s) \geq n\]$. If $n = 0$, then there is nothing to prove. For
$n = 1$, use the previous observation that $\exists s \in {A}_{p} \[s \subset
b\]$. Suppose that $n > 1$ and that the claim is true for $n - 1$. Suppose for a
contradiction that it fails for $n$. In particular, for every $k \in \omega$,
$\nor(b \cap k) \not\geq n$, and so there exist ${b}^{k}_{0}, {b}^{k}_{1}$ such
that $b \cap k = {b}^{k}_{0} \cup {b}^{k}_{1}$, and neither ${b}^{k}_{0}$ nor
${b}^{k}_{1}$ contains a set $s$ such that $\nor(s) \geq n - 1$. By a standard
K\"onig's Lemma argument, this gives us ${b}_{0}, {b}_{1}$ such that $b =
{b}_{0} \cup {b}_{1}$ and neither ${b}_{0}$ nor ${b}_{1}$ contains a set $s$
with $\nor(s) \geq n - 1$. However, either ${b}_{0}$ or ${b}_{1}$ is in
${\GG}^{+}$, which contradicts the induction hypothesis.

	Now, fix ${\alpha}_{0}, \dotsc, {\alpha}_{k} < {\omega}_{1}$. As
${\F}_{p}$ is a ${F}_{\sigma}$ filter and as $\interior(p) \cap c$ is positive
for the filter generated by ${\F}_{p} \cup \F(\A)$, Lemma \ref{lem:ad} applies
and implies that there is a set $d \in {\[\interior(p) \cap c \]}^{\omega}$
which is positive for the filter generated by ${\F}_{p} \cup \F(\A)$, and
$\forall ({a}_{0}, \dotsc, {a}_{k}) \in {X}_{{\alpha}_{0}} \times \dotsb \times
{X}_{{\alpha}_{k}} \[\lc ({a}_{0} \cup \dotsb \cup {a}_{k}) \cap d \rc < \omega
\]$. Of course, $d \in {\GG}^{+}$. Therefore, for any $a \in \I(\A)$, and for
any $n \in \omega$, there is a $s \subset d$ such that $\nor(s) \geq n$ and $a
\cap s = 0$. Choose $\delta < {\omega}_{1}$ such that $p, c, d$, and $\nor$ are
in ${\V}_{\delta}$. Now, work in ${\V}_{\delta}$. Define a poset $\P$ as
follows. For $s \in {\[d\]}^{< \omega} \setminus \{0\}$, let ${m}_{s}$ denote
$\min\{m \in \omega: s \cap {s}^{p}_{m} \neq 0\}$ and let ${m}^{s}$ denote
$\max\{m \in \omega: s \
cap {s}^{p}_{m} \neq 0\}$. $\P$ consists of all $\sigma : \dom(\sigma)
\rightarrow {\[d\]}^{<\omega}$ such that:
	\begin{itemize}
		\item
			$\dom(\sigma) \in \omega$ and for each $i <
\dom(\sigma)$, $\nor(\sigma(i)) > 0$
		\item
			for any $i < i + 1 < \dom(\sigma)$, $\langle \sigma(i),
\nor \restrict \sigma(i) \rangle < \langle \sigma(i + 1), \nor \restrict
\sigma(i + 1) \rangle$ and also ${m}^{\sigma(i)} < {m}_{\sigma(i + 1)}$.
	\end{itemize}
For $\sigma, \tau \in \P$, $\tau \leq \sigma$ iff $\tau \supset \sigma$. Fix
${\beta}_{0}, \dotsc, {\beta}_{l} < {\omega}_{1}$, $n, m \in \omega$. For any
$({a}_{0}, \dotsc, {a}_{l}) \in {Y}_{{\beta}_{0}} \times  \dotsb \times
{Y}_{{\beta}_{l}}$, there is a $s \subset d \setminus m$ such that $s \cap
({a}_{0} \cup \dotsb \cup {a}_{l}) = 0$ and $\nor(s) \geq n$. Again, by a
compactness argument, there is a set $s \subset d \setminus m$ such that
	\begin{align*}
		\forall ({a}_{0}, \dotsc, {a}_{l}) \in {Y}_{{\beta}_{0}} \times
\dotsb \times {Y}_{{\beta}_{l}} \exists t \subset s \[({a}_{0} \cup \dotsb \cup
{a}_{l}) \cap t = 0 \wedge \nor(t) \geq n\] \tag{$\ast$}.
	\end{align*}
Note that ($\ast$) is absolute between ${\V}_{\delta}$ and
${\V}_{{\omega}_{1}}$. Now, for each ${\beta}_{0}, \dotsc, {\beta}_{l} <
{\omega}_{1}$ and $n \in \omega$, 
	\begin{align*}
		\left\{\tau \in \P: \exists i < \dom(\tau) \[\tau(i) \
\text{satisfies} \ (\ast) \ \text{with respect to} \ {\beta}_{0}, \dotsc,
{\beta}_{l}, n \]\right\}
	\end{align*}
is dense in $\P$. Since $\P$ is forcing equivalent to Cohen forcing, there is a
function $f: \omega \rightarrow {\[d\]}^{< \omega}$ in ${\V}_{{\omega}_{1}}$
which is $({\V}_{\delta}, \P)$-generic. For each $i \in \omega$, put
${c}^{q}_{i} = \langle f(i), \nor \restrict f(i) \rangle$. Put $q = \langle
{s}^{p}, \langle {c}^{q}_{i}: i \in \omega \rangle \rangle$. It is clear that
$q$ is a $0$-condition and that $q \; {\leq}_{0} \; p$. It is also clear that
$\interior(q) \subset c$. By genericity of $f$, for each ${\beta}_{0}, \dotsc,
{\beta}_{l} < {\omega}_{1}$ and $n \in \omega$, there is $i \in \omega$ such
that ${s}^{q}_{i}$ satisfies ($\ast$) with respect to ${\beta}_{0}, \dotsc,
{\beta}_{l}$ and $n$. It follows that $q$ is a $1$-condition, and we are done.
\end{proof}
\begin{Cor} \label{cor:ad}
	There are $1$-conditions. Moreover, given any $1$-condition $p$ and
${\alpha}_{0}, \dotsc, {\alpha}_{k} < {\omega}_{1}$, there is a $1$-condition $q
\leq p$ such that $\forall ({a}_{0}, \dotsc, {a}_{k}) \in {X}_{{\alpha}_{0}}
\times \dotsb \times {X}_{{\alpha}_{k}} \[\lc ({a}_{0} \cup \dotsb \cup {a}_{k})
\cap \interior(q) \rc < \omega\]$. 
\end{Cor}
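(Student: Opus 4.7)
The plan is to deduce both assertions of the corollary from Lemma~\ref{lem:main}, applied once to a carefully built initial $0$-condition (for the existence part) and once to the given $1$-condition treated as a $0$-condition (for the extension part).

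For the extension part, I would apply Lemma~\ref{lem:main} to the given $1$-condition $p$ with $c = \omega$; the resulting $q \leq_0 p$ is then a $1$-condition with $q \leq p$ and the required disjointness from $X_{\alpha_0} \times \dotsb \times X_{\alpha_k}$. The only thing needing verification is hypothesis~(2), namely that $\F_p \cup \F(\A)$ is proper, and this is precisely where the defining property of $1$-conditions is used. Suppose instead we had witnesses $a_0, \dotsc, a_l$ (each with $\omega \setminus a_j \in C_p$), $A_1, \dotsc, A_m \in \A$, and $i \in \omega$ satisfying $\omega \setminus i \subseteq a_0 \cup \dotsb \cup a_l \cup A_1 \cup \dotsb \cup A_m$. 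Letting $a := A_1 \cup \dotsb \cup A_m \in \I(\A)$, the $1$-condition property supplies $n$ with $s_n^p \cap i = \emptyset$ and $\nor_n^p(s_n^p \setminus a)$ so large that iterating clause~(3) of Definition~\ref{def:norm} through the cover $s_n^p \setminus a \subseteq a_0 \cup \dotsb \cup a_l$ produces a piece of positive norm inside some $a_j \cap s_n^p$. Monotonicity then forces $\nor_n^p(a_j \cap s_n^p) > 0$, contradicting $\omega \setminus a_j \in C_p$.

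For the existence of $1$-conditions, I will construct a $0$-condition $p_0$ with $\F_{p_0} \cup \F(\A)$ proper, and then Lemma~\ref{lem:main} with $c = \omega$ and any single $\alpha_0 < \omega_1$ yields a $1$-condition $q \leq_0 p_0$. To build $p_0$, I take $s^{p_0} = \emptyset$ and creatures $c_n^{p_0} = \langle t_n, \nor_n\rangle$ with pairwise disjoint $t_n \subseteq \omega$ of size at least $2^{n+1}$, $\max t_n < \min t_{n+1}$, and the logarithmic norm $\nor_n(s) = \lfloor \log_2 |s|\rfloor$. Properness of $\F_{p_0} \cup \F(\A)$ reduces, by exactly the same partition-norm argument as in the extension step, to the requirement that $|t_n \setminus \bigcup F|$ be unbounded in $n$ for every finite $F \subseteq \A$. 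To arrange this I would enumerate a countable subfamily $\{A_m\}_{m < \omega} \subseteq \A$ and place each $t_n$ inside $A_n \setminus \bigcup_{i < n} A_i$ (a cofinite piece of $A_n$ by almost disjointness), sizing the $t_n$'s rapidly enough to dominate the finite intersections $|A_n \cap A|$ with members of $\A$ outside the chosen enumeration.

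The hardest point is this existence step when $|\A|$ is uncountable, since no countable enumeration of $\A$ can directly certify unboundedness of $|t_n \setminus \bigcup F|$ for $F$ containing elements outside $\{A_m\}_{m<\omega}$. The a.d.\ bound $|A_n \cap A_\beta| < \omega$ for each $\beta$ is finite but possibly unbounded in $n$; handling this with a suitably rapid growth of $|t_n|$ is the crux of the construction. Once the properness of $\F_{p_0} \cup \F(\A)$ is established, both halves of the corollary follow directly from Lemma~\ref{lem:main}.
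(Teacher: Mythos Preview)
Your treatment of the second statement (the extension part) is correct and matches the paper's argument: once $p$ is a $1$-condition, the filter generated by $\F_p \cup \F(\A)$ is proper, so Lemma~\ref{lem:main} applied with $c=\omega$ gives the required $q \leq_0 p$. Your verification of properness via the norm-splitting property is exactly the intended mechanism.

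The existence part, however, has a genuine gap that you yourself flag but do not close. Your plan is to place $t_n \subset A_n \setminus \bigcup_{i<n} A_i$ for a fixed countable enumeration $\{A_m\}_{m<\omega} \subset \A$ and then appeal to ``suitably rapid growth'' of $|t_n|$ to guarantee $|t_n \setminus \bigcup F| \to \infty$ for every finite $F \subset \A$. This cannot work as stated: for $A_\beta \in \A$ outside your enumeration, you only know $|t_n \cap A_\beta| \leq |A_n \cap A_\beta| < \omega$, and the sequence $n \mapsto |A_n \cap A_\beta|$ is completely unconstrained by your choices. No growth rate of $|t_n|$ fixed in advance can dominate all of the (uncountably many) sequences $\langle |A_n \cap A_\beta| : n \in \omega \rangle$ simultaneously. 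So there is no reason $\F_{p_0} \cup \F(\A)$ should be proper, and the reduction to Lemma~\ref{lem:main} never gets off the ground.

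The paper avoids this obstacle by not trying to build such a $p_0$ at all. Instead it recycles the \emph{proof} (not the statement) of Lemma~\ref{lem:main}: take $A = [\omega]^{\geq 2}$, let $\nor$ be the induced norm, set $d = \omega$, and form the Cohen-like poset $\P$ of finite sequences of creatures from that proof. The compactness argument $(\ast)$ there goes through because any finite union of members of $\A$ has infinite complement, and a $(\V_\delta, \P)$-generic sequence (available because we are in $\V_{\omega_1}$) directly produces a $1$-condition. The Cohen reals are what let you meet the uncountably many density requirements indexed by tuples $\bar\alpha \in \omega_1^{<\omega}$; your ground-model construction has no substitute for this.
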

\begin{proof}
	For the second statement, note that if $p$ is a $1$-condition, then the
filter generated by ${\F}_{p} \cup \F(\A)$ is proper. Now, apply Lemma
\ref{lem:main}. 

	The first statement is a corollary of the proof of Lemma \ref{lem:main}.
For example, let $A = {\[\omega\]}^{\geq2}$, and let $\nor$, the norm induced by
$A$, be defined as in the proof of Lemma \ref{lem:main}. Let $\P$ be defined
(with $d = \omega$) as in the proof of Lemma \ref{lem:main}, leaving out any
mention about ${m}^{\sigma(i)}$ and ${m}_{\sigma(i + 1)}$, which are irrelevant
here. Then an appropriate generic for $\P$ yields a $1$-condition.  
\end{proof}	
From this point on the argument is fairly standard, and follows Shelah
\cite{PIF}.
\begin{Def} \label{def:assorted}
	${\P}_{0} = \{p: p \ \text{is a} \ 0\text{-condition}\}$. ${\P}_{1} =
\{p: p \ \text{is a} \ 1\text{-condition}\}$. The ordering on both ${\P}_{0}$
and ${\P}_{1}$ is $\leq$.

	Fix $p \in {\P}_{0}$. Suppose $t \in {\[\interior(p)\]}^{< \omega}$.
Define ${m}^{p}_{t} = \max\{m \in \omega: {s}^{p}_{m} \cap t \neq 0\}$, with the
convention that ${m}^{p}_{t} = -1$ when $t = 0$. For $t \in
{\[\interior(p)\]}^{< \omega}$ and $n > {m}^{p}_{t}$, $p(t, n)$ is the
$0$-condition defined as follows. ${s}^{p(t, n)} = {s}^{p} \cup t$, and for all
$i \in \omega$, ${c}^{p(t, n)}_{i} = {c}^{p}_{i + n}$. It is clear that $p(t, n)
\leq p$.
\end{Def}
The poset ${\P}_{0}$ is proper and does not add dominating reals. Consult either
\cite{PIF} or \cite{Ab} for a proof of this.
We will work towards showing that ${\P}_{1}$ is proper.
We first make some basic observations about the above definitions. Fix $p \in
{\P}_{0}$ and suppose $q \; {\leq}_{0} \; p$. Suppose $t \in
{\[\interior(q)\]}^{< \omega}$. Then ${m}^{q}_{t} \leq {m}^{p}_{t}$. Moreover,
if $k > {m}^{p}_{t}$, then $q(t, k) \; {\leq}_{0} \; p(t, k)$. Also, suppose
that $p, q \in {\P}_{0}$ with $q \; {\leq}_{0} \; p$. Suppose $t \in
{\[\interior(q)\]}^{< \omega}$ and suppose that $k > {m}^{q}_{t}$ and that $l >
{m}^{p}_{t}$. If for each $m \geq k$, ${s}^{q}_{m} \subset {\bigcup}_{j \in [l,
\infty)}{{s}^{p}_{j}}$, then $q(t, k) \; {\leq}_{0} \; p(t, l)$. To avoid
unnecessary repetitions, all conditions belong to ${\P}_{1}$ from this point on
unless specified. Also, unless specified, we are working inside
${\V}_{{\omega}_{1}}$.
\begin{Lemma} \label{lem:process}
	Let $\mathring{x} \in {\V}^{{\P}_{1}}_{{\omega}_{1}}$ such that
${\forces}_{1}{\mathring{x} \in {\V}_{{\omega}_{1}}}$. Fix $p$, $k \in \omega
\setminus \{0\}$, and $t \subset {\bigcup}_{m \in [0, k)}{{s}^{p}_{m}}$. Then
there is $\bar{p} \; {\leq}_{k} \; p$ such that for any $q \; {\leq}_{k} \;
\bar{p}$, if there exists $r \leq q$ such that ${s}^{r} \setminus {s}^{p} = t$
and ${r}{\forces}_{1}{\mathring{x} = x}$, then ${q(t, k)} {\forces}_{1}
{\mathring{x} = x}$.
\end{Lemma}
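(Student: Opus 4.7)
The plan is to reduce Lemma \ref{lem:process} to a pure decision property for $\bar{p}(t,k)$ and prove the latter by a fusion argument based on the halving property of Definition \ref{def:norm}(3).

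The key claim I aim for is: there exists $\bar{p} \leq_k p$ such that, writing $p^* := \bar{p}(t,k)$, either $p^* \forces_1 \mathring{x} = x$ for some $x$, or no $\leq_0$-extension of $p^*$ forces any value for $\mathring{x}$. Granting this, the conclusion of Lemma \ref{lem:process} follows: suppose $q \leq_k \bar{p}$ and $r \leq q$ with $s^r \setminus s^p = t$ and $r \forces_1 \mathring{x} = x$. Since $s^q = s^p$ and $t \subset \bigcup_{m < k} s^p_m$, the threshold $n_0$ in clause (8) of Definition \ref{def:norm} witnessing $r \leq q$ satisfies $n_0 \leq k$, and the associated interval partition $\langle i_n \rangle$ has $i_n \to \infty$. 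Choose $n^*$ with $i_{n^*} \geq k$ and set $r^* := r(\emptyset, n^*)$. Then every creature of $r^*$ lies in $\bigcup_{m \geq k} s^q_m = \interior(q(t,k))$, and the shifted partition $\langle i_{n^* + n} - k \rangle$ witnesses $r^* \leq_0 q(t,k)$. Since $q(t,k) \leq_0 p^*$, we have $r^* \leq_0 p^*$; in particular the second alternative is ruled out, so $p^* \forces_1 \mathring{x} = x_0$ for some $x_0$, and the compatibility of $r^*$ and $p^*$ forces $x_0 = x$. Hence $q(t,k) \leq_0 p^* \forces_1 \mathring{x} = x$.

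To produce such $\bar{p}$, I construct a fusion sequence $p = p_0 \geq_k p_1 \geq_{k+1} p_2 \geq_{k+2} \cdots$ of $1$-conditions and assemble its limit via Lemma \ref{lem:fusion}. The argument is a standard halving: at stage $n$, inspect the creature $c^{p_n}_{k+n}$ and ask whether some sub-creature, together with appropriate sub-creatures of $c^{p_n}_{k+n+1}, c^{p_n}_{k+n+2}, \ldots$, can be assembled into a $\leq_0$-extension of $p_n(t,k)$ forcing $\mathring{x}$ to some value. Clause (3) of Definition \ref{def:norm} guarantees a sub-creature of norm at least $\nor^{p_n}_{k+n}(s^{p_n}_{k+n}) - 1$ that either supports such a deciding extension (in which case we stabilize at a deciding condition) or blocks any deciding extension through this index. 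Pre-inflating the norms of $p$ via Corollary \ref{cor:ad}, so that the $(k+n)$-th creature has norm at least $n+2$, absorbs the unit norm losses incurred by halving.

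The main technical obstacle is maintaining the $1$-condition property throughout the fusion. Halving shrinks creatures, which threatens the requirement of Definition \ref{def:1condition} that for every $a \in \I(\A)$ and every $k'$ there be a creature of norm $\geq k'$ disjoint from $a$. To address this, I interleave halving steps with applications of Lemma \ref{lem:main}, which produces $\leq_0$-refinements that are $1$-conditions almost avoiding any prescribed finite family $X_{\alpha_0}, \ldots, X_{\alpha_j}$. The crucial observation is that any further $\leq_0$-refinement preserves the ``no deciding extension'' information already installed by earlier halving steps, since every $\leq_0$-extension of the refined condition is also a $\leq_0$-extension of the original. A careful bookkeeping that alternates halving steps with $1$-condition-maintenance steps, together with the initial norm inflation, yields a fusion limit $\bar{p}$ that simultaneously satisfies both requirements.
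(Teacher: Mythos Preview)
Your key claim (either $\bar p(t,k)$ decides $\mathring x$, or no $\leq_0$-extension of it does) together with your reduction paragraph is exactly what the paper proves, and your reduction argument is essentially the paper's verification. But your proposed construction of $\bar p$ is massively over-engineered and has a real gap. The paper obtains the key claim in a single step: ask whether there exists any $\bar q \leq_0 p(t,k)$ in $\P_1$ deciding $\mathring x$. If yes, graft $\bar q$ onto $p$ above level $k$ (set $s^{\bar p}=s^p$, $c^{\bar p}_m=c^p_m$ for $m<k$, $c^{\bar p}_m=c^{\bar q}_{m-k}$ for $m\geq k$); since $\bar q\in\P_1$, the graft $\bar p$ is automatically a $1$-condition, and $\bar p(t,k)=\bar q$ decides. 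If no, take $\bar p=p$; then $p^\ast=p(t,k)$ and by assumption no $\leq_0$-extension of $p^\ast$ decides. That is the entire argument. No fusion, no halving, no Cohen reals are needed for this lemma.

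Your fusion scheme, by contrast, does not clearly achieve the dichotomy and has an unresolved technical problem. The dichotomy ``some $\leq_0$-extension decides versus none does'' is a single yes/no question about $p(t,k)$; nothing is gained by asking it repeatedly along a fusion, and the halving property of Definition~\ref{def:norm}(3) is irrelevant here since you are not trying to homogenize a colouring of sub-creatures but merely to locate (or rule out) one deciding extension. More seriously, your plan to ``interleave halving steps with applications of Lemma~\ref{lem:main}'' cannot work as stated: Lemma~\ref{lem:main} produces $\leq_0$-refinements, which in general alter \emph{all} creatures including those with index $<k+n$ already frozen by the fusion, so the sequence $p_0 \geq_k p_1 \geq_{k+1} p_2 \geq_{k+2}\cdots$ collapses. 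In the paper, maintaining the $1$-condition through a genuine fusion is precisely the difficulty addressed in Lemmas~\ref{lem:process3}--\ref{lem:process5}, and it requires the Cohen-real genericity trick rather than a direct interleaving. That machinery is not needed for the present lemma because the one-step graft already lands in $\P_1$.
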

\begin{proof}
	$\bar{p}$ is gotten as follows. First suppose that there is a $\bar{q}
\; {\leq}_{0} \; p(t, k)$ and $x \in {\V}_{{\omega}_{1}}$ such that ${\bar{q}}
\; {\forces}_{1} \; {\mathring{x} = x}$. We may assume that
${\nor}^{\bar{q}}_{0}\left({s}^{\bar{q}}_{0} \right) > {\nor}^{p}_{k -
1}\left({s}^{p}_{k - 1}\right)$. Now define $\bar{p}$, by ${s}^{\bar{p}} =
{s}^{p}$, ${c}^{\bar{p}}_{m} = {c}^{p}_{m}$, for $m < k$, and ${c}^{\bar{p}}_{m}
= {c}^{\bar{q}}_{m - k}$, for $m \geq k$. If there is no such $\bar{q}$, then
simply set $\bar{p} = p$. In either case, it is clear that $ \bar{p} \;
{\leq}_{k} \; p$.

	Now, fix $q \; {\leq}_{k} \; \bar{p}$. Note that if the first case
happens above, then $q(t, k) \;  {\leq}_{0} \; \bar{q}$, and so ${q(t, k)} \; 
{\forces}_{1} \; {\mathring{x} = x}$. Suppose $r \leq q$ such that ${s}^{r}
\setminus {s}^{p} = t$ and $y \in {\V}_{{\omega}_{1}}$ such that ${r} \;
{\forces}_{1} \; {\mathring{x} = y}$. First, we claim that the first case must
have happened above. Suppose not. Then $\bar{p} = p$. We may assume that
${s}^{r}_{0} \subset {\bigcup}_{m \in [k, \infty)}{{s}^{q}_{m}}$. But then $r \;
{\leq}_{0} \; p(t,k)$, which contradicts the supposition that the first case did
not occur. So the first case occurs, and therefore, ${q(t, k)} \;  {\forces}_{1}
\; {\mathring{x} = x}$. Again, we may assume that ${s}^{r}_{0} \subset
{\bigcup}_{m \in [k, \infty)}{{s}^{q}_{m}}$. But then $r \; {\leq}_{0} \; q(t,
k)$, whence $x = y$.
\end{proof}
\begin{Lemma} \label{lem:process2}
	Let $\mathring{x} \in {\V}^{{\P}_{1}}_{{\omega}_{1}}$ such that
${\forces}_{1}{\mathring{x} \in {\V}_{{\omega}_{1}}}$. Fix $p$, $k \in \omega
\setminus \{0\}$. There exists $\bar{p} \; {\leq}_{k} \; p$ such that
	\begin{align*}
		& \text{for any} \ q \; {\leq}_{k} \; \bar{p} \ \text{and for any} \ t \subset {\bigcup}_{m \in [0, k)}{{s}^{p}_{m}}, \ \text{if there exists} \ r \; \leq \; q \ \text{and} \tag{${\dagger}_{1}$} \\ & x \in {\V}_{{\omega}_{1}} \ \text{such that} \ {s}^{r} \setminus {s}^{p} = t \ \text{and} \ {r} \; {\forces}_{1} \; {\mathring{x} = x}, \ \text{then} \ {q(t,k)} \; {\forces}_{1} \; {\mathring{x} = x}.  
	\end{align*}
\end{Lemma}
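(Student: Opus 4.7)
The plan is to reduce Lemma \ref{lem:process2} to a finite iteration of Lemma \ref{lem:process}. The essential observation is that the set $\bigcup_{m \in [0,k)}{{s}^{p}_{m}}$ is finite, so the collection of subsets $t$ appearing in $({\dagger}_{1})$ is finite. Enumerate these subsets as $\langle {t}_{0}, \dotsc, {t}_{N-1}\rangle$. I will build a finite descending sequence $p = {p}_{0} \; {\geq}_{k} \; {p}_{1} \; {\geq}_{k} \; \dotsb \; {\geq}_{k} \; {p}_{N}$ by repeatedly invoking Lemma \ref{lem:process}, where at step $i$ the condition ${p}_{i+1} \; {\leq}_{k} \; {p}_{i}$ is obtained by applying Lemma \ref{lem:process} with $p$ replaced by ${p}_{i}$ and with the subset ${t}_{i}$. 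The final $\bar{p} := {p}_{N}$ will then be shown to satisfy $({\dagger}_{1})$ for every $t$ simultaneously.

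The key point that makes the iteration work is that ${\leq}_{k}$ is transitive and preserves the relevant initial data: since ${p}_{i+1} \; {\leq}_{0} \; {p}_{i}$ one has ${s}^{{p}_{i}} = {s}^{p}$, and since ${p}_{i+1} \; {\leq}_{k} \; {p}_{i}$ one has ${c}^{{p}_{i}}_{m} = {c}^{p}_{m}$ for all $m < k$. Consequently $\bigcup_{m \in [0,k)}{{s}^{{p}_{i}}_{m}} = \bigcup_{m \in [0,k)}{{s}^{p}_{m}}$ for every $i$, so the same enumeration ${t}_{0}, \dotsc, {t}_{N-1}$ works throughout, and for any $q \; {\leq}_{k} \; \bar{p}$ we automatically have $q \; {\leq}_{k} \; {p}_{i+1}$ for every $i < N$. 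Moreover, for any $r \leq q$, the statement $s^{r} \setminus s^{p} = {t}_{i}$ is the same as $s^{r} \setminus s^{{p}_{i}} = {t}_{i}$, and the condition $q({t}_{i}, k)$ is defined purely from $q$ and ${t}_{i}$, not from which ${p}_{i}$ we remember. Therefore the conclusion of Lemma \ref{lem:process} applied at stage $i$ transfers verbatim to $q$.

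Given these observations, the verification of $({\dagger}_{1})$ is immediate: fix any $q \; {\leq}_{k} \; \bar{p}$ and any $t \subset \bigcup_{m \in [0,k)}{{s}^{p}_{m}}$, and suppose $r \leq q$ and $x \in {\V}_{{\omega}_{1}}$ satisfy ${s}^{r} \setminus {s}^{p} = t$ and $r \; {\forces}_{1} \; \mathring{x} = x$. Then $t = {t}_{i}$ for some $i < N$, and since $q \; {\leq}_{k} \; {p}_{i+1}$, the conclusion of Lemma \ref{lem:process} at stage $i$ gives $q(t,k) \; {\forces}_{1} \; \mathring{x} = x$, as required. I do not anticipate any real obstacle here; the entire content is bookkeeping around the fact that $\bigcup_{m \in [0,k)}{{s}^{p}_{m}}$ has only finitely many subsets, together with the transitivity of $\leq_{k}$ and the stability of the initial segment $\langle {c}^{p}_{m} : m < k\rangle$ under $\leq_{k}$-extensions.
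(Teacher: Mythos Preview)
Your proposal is correct and is essentially identical to the paper's own proof: both enumerate the finitely many subsets $t \subset \bigcup_{m<k} s^p_m$ and build a finite $\leq_k$-descending chain by applying Lemma~\ref{lem:process} once for each $t$, using transitivity of $\leq_k$ and the fact that $s^{p_i} = s^p$ and $c^{p_i}_m = c^p_m$ for $m<k$ throughout. The only difference is cosmetic indexing (the paper starts the chain at $p_{-1} = p$), and you spell out in more detail why the hypotheses of Lemma~\ref{lem:process} match up at each stage.
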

\begin{proof}
	Let ${t}_{0}, \dotsc, {t}_{l}$ enumerate all $t \subset {\bigcup}_{m \in
[0, k)}{{s}^{p}_{m}}$. Now construct a sequence $p = {p}_{-1} \; {}_{k}{\geq} \;
{p}_{0} \; {}_{k}{\geq} \; \dotsb \; {}_{k}{\geq} \; {p}_{l} = \bar{p}$ as
follows. For $-1 \leq i < l$, suppose ${p}_{i} \; {\leq}_{k} \; p$ is given.
Note that ${t}_{i + 1} \subset {\bigcup}_{m \in [0, k)}{{s}^{{p}_{i}}_{m}}$. So
apply Lemma \ref{lem:process} to find ${p}_{i + 1} \; {\leq}_{k} \; {p}_{i}$
such that for any $q \; {\leq}_{k} \; {p}_{i + 1}$, if there are $r \leq q$ and
$x \in {\V}_{{\omega}_{1}}$ such that ${s}^{r} \setminus {s}^{p} = {t}_{i + 1}$
and ${r} \; {\forces}_{1} \; {\mathring{x} = x}$, then ${q({t}_{i + 1}, k)} \;
{\forces}_{1} \; {\mathring{x} = x}$. It is clear that $\bar{p}$ is as needed.
\end{proof}
\begin{Lemma} \label{lem:process3}
	Fix $p \in {\P}_{1}$ and $\mathring{f} \in
{\V}^{{\P}_{1}}_{{\omega}_{1}}$ such that ${\forces}_{1}{\mathring{f} \in
{}^{\omega}{\left({\V}_{{\omega}_{1}}\right)}}$. Then there is a $\bar{p} \;
{\leq}_{0} \; p$ such that
	\begin{align*}
		& \text{for any} \ q \; {\leq}_{0} \; \bar{p}, \ \text{for any}
\ t \in {\[\interior(q)\]}^{< \omega}, \ \text{and for any} \ i \in \omega, \
\text{there is a} \tag{${\dagger}_{2}$} \\ & k > {m}^{q}_{t} \ \text{such that
if there is a} \ r \leq q \ \text{and} \ x \in {\V}_{{\omega}_{1}} \ \text{such
that} \ {s}^{r} \setminus {s}^{p} = t \\ & \text{and} \ {r} \; {\forces}_{1} \;
{\mathring{f}(i) = x}, \ \text{then} \ {q(t, k)} \; {\forces}_{1} \;
{\mathring{f}(i) = x}.  
	\end{align*}
\end{Lemma}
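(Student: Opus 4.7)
My plan is to carry out a countable fusion argument using Lemma \ref{lem:process2} as the basic step, handling each coordinate of $\mathring{f}$ cofinally often. Set $k_j = j + 1$, and fix an enumeration $\langle i_j : j \in \omega \rangle$ of $\omega$ in which every natural number appears infinitely often. I would construct inductively a decreasing sequence $p = p_0 \; {\geq}_{k_0} \; p_1 \; {\geq}_{k_1} \; p_2 \; {\geq}_{k_2} \; \cdots$ of $1$-conditions: at stage $j$, apply Lemma \ref{lem:process2} to $p_j$ at level $k_j$ with name $\mathring{x} = \mathring{f}(i_j)$ to obtain $p_{j+1} \; {\leq}_{k_j} \; p_j$ satisfying $({\dagger}_1)$. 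Let $\bar{p}$ be the fusion of $\langle p_j \rangle$ furnished by Lemma \ref{lem:fusion}; then $\bar{p} \; {\leq}_{k_j} \; p_j$ for every $j$, and hence (using $k_j < k_{j+1}$) also $\bar{p} \; {\leq}_{k_j} \; p_{j+1}$.

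To verify $({\dagger}_2)$, fix $q \; {\leq}_0 \; \bar{p}$, $t \in {[\interior(q)]}^{<\omega}$, and $i \in \omega$. Since $t$ is finite, choose $j$ with $i_j = i$ and $k_j$ exceeding both $m^q_t$ and $\max\{m : s^{\bar{p}}_m \cap t \neq \emptyset\}$; this is possible because each $i$ appears cofinally and $k_j \to \infty$. Because the first $k_j$ creatures of $\bar{p}$, $p_{j+1}$, and $p_j$ all coincide, $t \subset \bigcup_{m < k_j} s^{p_j}_m$. Now apply $({\dagger}_1)$ of stage $j$ with $q' = \bar{p}$: if some $r \leq \bar{p}$ with $s^r \setminus s^p = t$ forces $\mathring{f}(i) = x$, then $\bar{p}(t, k_j) \; {\forces}_1 \; \mathring{f}(i) = x$. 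Any $r \leq q$ from the hypothesis of $({\dagger}_2)$ is also $\leq \bar{p}$, so this applies. The key geometric observation is that $q \; {\leq}_0 \; \bar{p}$ implies $q(t, k_j) \leq \bar{p}(t, k_j)$: by clause (8) of $\leq$, an interval partition $\langle I_n : n \in \omega \rangle$ of $[0, \infty)$ with $I_0 = 0$ satisfies $s^q_n \subset \bigcup_{\mu \in [I_n, I_{n+1})} s^{\bar{p}}_\mu$, and strict monotonicity gives $I_{k_j + n} \geq k_j$ for every $n$, so shifting the partition by $k_j$ yields a valid witness for $q(t, k_j) \leq \bar{p}(t, k_j)$; clause (9) transfers similarly since positive-norm subsets of $s^q_{k_j + n}$ are supported on $s^{\bar{p}}_\mu$ with $\mu \geq k_j$. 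Therefore $q(t, k_j) \; {\forces}_1 \; \mathring{f}(i) = x$, and setting $k = k_j > m^q_t$ establishes $({\dagger}_2)$.

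The main obstacle I anticipate is ensuring that $\bar{p}$ is itself a $1$-condition, as required by the convention that all conditions lie in $\P_1$. Each $p_j$ is a $1$-condition inherited from Lemma \ref{lem:process2}, but the fusion combines creatures from disjoint ranges, and a witness $n$ to $\nor^{p_j}_n(s^{p_j}_n \setminus a) \geq k$ for $a \in \I(\A)$ may lie at an index $\geq k_j$ that is subsequently overwritten at a later stage. I would address this either by augmenting the inductive step with additional applications of Corollary \ref{cor:ad} so that the creatures introduced into $[k_j, k_{j+1})$ already witness sufficient norm control against an appropriately diagonalised list of test sets, or by verifying a posteriori that $\F_{\bar{p}} \cup \F(\A)$ remains a proper filter and then invoking Lemma \ref{lem:main} with $c = \omega$ to refine $\bar{p}$ to a $1$-condition $\bar{q} \; {\leq}_0 \; \bar{p}$; since $({\dagger}_2)$ is preserved downwards under $\leq_0$-extension, such a $\bar{q}$ is the desired condition.
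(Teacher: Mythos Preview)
Your verification of $(\dagger_2)$ from the fusion is essentially correct, and you have put your finger on the real obstacle: the fusion $\bar p$ of a $\leq_{k_j}$-decreasing sequence of $1$-conditions need not be a $1$-condition. Unfortunately, neither of your proposed repairs closes the gap.

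For the first repair, $\I(\A)$ is generated by $\omega_1$ closed sets, so no countable diagonalization against ``test sets'' can cover it; Corollary~\ref{cor:ad} only handles finitely many $X_\alpha$ per application. For the second repair, you give no argument that $\F_{\bar p}\cup\F(\A)$ is proper, and there is no obvious one: from $\bar p\leq p_j$ you only get $\F_{\bar p}\supset\F_{p_j}$, which points the wrong way, and the single creature $c^{p_{j+1}}_{k_j}$ that survives into $\bar p$ carries no information about $\I(\A)$ even though $p_{j+1}$ as a whole is a $1$-condition. Nothing in Lemma~\ref{lem:process2} prevents these surviving creatures from being almost contained in a fixed $a\in\A$, in which case $\interior(\bar p)\subset^* a$ and the filter is improper.

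The paper's device is to replace the linear fusion by a \emph{tree} of conditions: one builds $\Sigma:\omega^{<\omega}\to\P_1$ and $\Delta:\omega^{<\omega}\setminus\{0\}\to\omega\setminus\{0\}$ so that every branch $g$ yields a $0$-condition $q_g$ satisfying $(\dagger_2)$ by exactly your argument, and so that at each node $\sigma$ the values $\Delta(\sigma^\frown\langle j\rangle)$ are unbounded in $j$. One then places $\Sigma,\Delta$ in some $\V_\delta$ and takes $g\in\V_{\omega_1}$ to be $(\V_\delta,\omega^{<\omega})$-generic. For each tuple $\alpha_0,\dots,\alpha_k,n,m$, compactness of $Y_{\alpha_0}\times\cdots\times Y_{\alpha_k}$ yields a bound $j_0$ such that among the first $j_0$ creatures of $\Sigma(\sigma)$ one finds a subset of norm $\geq n$ disjoint from every $(a_0,\dots,a_k)$ and from $m$; extending $\sigma$ by any $j$ with $\Delta(\sigma^\frown\langle j\rangle)>j_0$ freezes that creature into $q_g$. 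Genericity then hits all such dense sets, giving $q_g\in\P_1$. The freedom to choose $\Delta$ arbitrarily large at each node---precisely what your fixed schedule $k_j=j+1$ lacks---is what makes this work.
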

\begin{proof}
Define functions $\Sigma: {\omega}^{< \omega} \rightarrow {\P}_{1}$  and
$\Delta: {\omega}^{< \omega} \setminus \{0\} \rightarrow \omega \setminus \{0\}$
with the following properties:
	\begin{enumerate}
		\item
			$\Sigma(0) = p$ and for each $\sigma \in {\omega}^{< \omega}$ and $j \in \omega$, $\Sigma({\sigma}^{\frown}{\langle j \rangle}) \;{\leq}_{\Delta({\sigma}^{\frown}{\langle j \rangle})} \; \Sigma(\sigma)$
		\item
			for each $\sigma \in {\omega}^{< \omega} \setminus \{0\}$, and for each $j \in \omega$, $\Delta({\sigma}^{\frown}{\langle j \rangle}) > \Delta(\sigma)$. Also, for each $\sigma \in {\omega}^{< \omega}$ and $k \in \omega$, there is a $j \in \omega$ such that $\Delta({\sigma}^{\frown}{\langle j \rangle}) > k$  
		\item
			for each $\sigma \in {\omega}^{< \omega}$, $j \in \omega$, $i < \Delta({\sigma}^{\frown}{\langle j \rangle})$, (${\dagger}_{1}$) holds with $\bar{p}$ as $\Sigma({\sigma}^{\frown}{\langle j \rangle})$, $k$ as $\Delta({\sigma}^{\frown}{\langle j \rangle})$, $p$ as $\Sigma(\sigma)$, and $\mathring{x}$ as $\mathring{f}(i)$. 
	\end{enumerate}
By Lemma \ref{lem:process2} it is possible to define such functions $\Sigma$ and
$\Delta$. Now, fix $g \in \BS$. The hypotheses of Lemma \ref{lem:fusion} are
satisfied when ${p}_{n}$ is taken to be $\Sigma(g \restrict n)$ and ${k}_{n}$ as
$\Delta(g \restrict n + 1)$. Let ${q}_{g}$ be the $0$-condition defined as in
Lemma \ref{lem:fusion}. By Lemma \ref{lem:fusion}, for each $n \in \omega$,
${q}_{g} \; {\leq}_{\Delta(g \restrict n + 1)} \; \Sigma(g \restrict n)$.
Suppose for a moment that there is $g \in \BS$ such that ${q}_{g} \in {\P}_{1}$.
We first check that setting ${q}_{g} = \bar{p}$ does the job. Suppose $q \;
{\leq}_{0} \; {q}_{g}$. Fix $t \in {\[\interior(q)\]}^{< \omega}$ and $i \in
\omega$. Find $n \in \omega$ such that $\Delta(g \restrict n + 1) >
\max\left\{{m}^{{q}_{g}}_{t}, i\right\}$. Observe that ${m}^{q}_{t} < \Delta(g
\restrict n + 1)$. Thus $t \subset {\bigcup}_{m \in [0, \Delta(g \restrict n +
1))}{{s}^{{q}^{g}}_{m}}$. As ${q}_{g} \; {\leq}_{\Delta(g \restrict n + 1)} \;
\Sigma(g \restrict n)$, $t \subset {\bigcup}_{m \in [0, \Delta(g \restrict n +
1))}{{s}^{\Sigma(g \restrict n)}_{m}}$. We know that (${\dagger}_{1}$) holds
with $\bar{p}$ as $\Sigma(g \restrict n + 1)$, $k$ as $\Delta(g \restrict n +
1)$, $p$ as $\Sigma(g \restrict n)$, and $\mathring{x}$ as $\mathring{f}(i)$.
Note that ${q}_{g} \; {\leq}_{\Delta(g \restrict n + 2)} \; \Sigma(g \restrict n
+ 1)$, and so ${q}_{g} \; {\leq}_{\Delta(g \restrict n + 1)} \; \Sigma(g
\restrict n + 1)$. Now, suppose there exists $r \leq q$ and $x \in
{\V}_{{\omega}_{1}}$ such that ${s}^{r} \setminus {s}^{p} = t$ and ${r} \;
{\forces}_{1} \; {\mathring{f}(i) = x}$. Note that ${s}^{p} = {s}^{\Sigma(g
\restrict n)}$ and that $q \leq {q}_{g}$. Therefore, $r \leq {q}_{g}$ and
${s}^{r} \setminus {s}^{\Sigma(g \restrict n)} = t$. Applying (${\dagger}_{1}$),
we conclude that ${q}_{g}(t, \Delta(g \restrict n + 1)) \; {\forces}_{1} \;
{\mathring{f}(i) = x}$. But since $q(t, \Delta(g \restrict n + 1)) \; {\leq}_{0}
\; {q}_{g}(t, \Delta(g \restrict n + 
1))$, $q(t, \Delta(g \restrict n + 1)) \; {\forces}_{1} \; {\mathring{f}(i) =
x}$, and we are done. 

	Therefore, it is enough to find $g \in \BS$ such that ${q}_{g} \in
{\P}_{1}$. Find $\delta < {\omega}_{1}$ such that $\Sigma, \Delta \in
{\V}_{\delta}$. Work in ${\V}_{\delta}$. View ${\omega}^{< \omega}$ as a forcing
poset with $\tau \leq \sigma$ iff $\tau \supset \sigma$. Fix $\sigma \in
{\omega}^{< \omega}$, ${\alpha}_{0}, \dotsc, {\alpha}_{k} < {\omega}_{1}$, and
$n, m \in \omega$. Then for each $({a}_{0}, \dotsc , {a}_{k}) \in
{Y}_{{\alpha}_{0}} \times \dotsb \times {Y}_{{\alpha}_{k}}$, there is $i \in
\omega$ and $t \subset {s}^{\Sigma(\sigma)}_{i}$ with
${\nor}^{\Sigma(\sigma)}_{i}(t) \geq n$ such that $t \cap (m \cup {a}_{0} \cup
\dotsb \cup {a}_{k}) = 0$. Again, by a compactness argument, there is $j \in
\omega$ such that
	\begin{align*}
		\forall ({a}_{0}, \dotsc, {a}_{k}) \in &{Y}_{{\alpha}_{0}} \times \dotsb \times {Y}_{{\alpha}_{k}} \tag{$\ast$} \\ & \exists i \leq j \exists t \subset {s}^{\Sigma(\sigma)}_{i}\[{\nor}^{\Sigma(\sigma)}_{i}(t) \geq n \wedge t \cap (m \cup {a}_{0} \cup \dotsb \cup {a}_{k}) = 0\] 
	\end{align*} 
Note that ($\ast$) is absolute between ${\V}_{\delta}$ and
${\V}_{{\omega}_{1}}$. It follows that for any ${\alpha}_{0}, \dotsc, {\alpha}_{k} < {\omega}_{1}$ and $n, m \in \omega$, the set
\begin{align*}
	\{\tau \in {\omega}^{< \omega} \setminus \{0\}: \Delta(\tau) - 1 \ \text{satisfies} \ (\ast) \ \text{with respect to} \ \tau \restrict \lc \tau \rc - 1, {\alpha}_{0}, \dotsc, {\alpha}_{k}, n, m\}
\end{align*}
is dense in ${\omega}^{< \omega}$. There is a $g \in {\V}_{{\omega}_{1}}$ which
is $({\V}_{\delta}, {\omega}^{< \omega})$-generic. By genericity, for each
${\alpha}_{0}, \dotsc, {\alpha}_{k} < {\omega}_{1}$, and $n, m \in \omega$,
there is a $l \in \omega$ such that $\Delta(g \restrict l + 1) - 1$ satisfies
($\ast$) with respect to $g \restrict l$, ${\alpha}_{0}, \dotsc, {\alpha}_{k},
n, m$. Since ${q}_{g} \; {\leq}_{\Delta(g \restrict l + 1)} \; \Sigma(g
\restrict l)$, it follows that ${q}_{g} \in {\P}_{1}$. 
\end{proof}
An easy corollary of Lemma \ref{lem:process3} is the properness of ${\P}_{1}$.
The details are left to the reader.
\begin{Cor} \label{cor:proper}
	${\P}_{1}$ is proper.
\end{Cor}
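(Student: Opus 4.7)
The plan is to combine the fusion machinery with Lemma \ref{lem:process3} in the standard way. Fix a sufficiently large regular cardinal $\lambda$ and a countable elementary submodel $M \prec H(\lambda)$ with ${\P}_{1}, p, \A, \langle {X}_{\alpha} : \alpha < {\omega}_{1}\rangle \in M$ and $p \in {\P}_{1} \cap M$; the goal is to construct $q \leq p$ that is $(M,{\P}_{1})$-generic. Enumerate inside $M$ the dense open subsets of ${\P}_{1}$ belonging to $M$ as $\langle {D}_{n} : n \in \omega\rangle$, choose for each ${D}_{n}$ a ${\P}_{1}$-name ${\mathring{d}}_{n} \in M$ for an element of ${D}_{n} \cap \mathring{G}$, and bundle them into a single name $\mathring{f} \in M$ for an element of ${}^{\omega}{\left({\V}_{{\omega}_{1}}\right)}$ by $\mathring{f}(n) = {\mathring{d}}_{n}$.

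By elementarity, $M$ hosts functions $\Sigma: {\omega}^{<\omega} \to {\P}_{1}$ and $\Delta: {\omega}^{<\omega} \setminus \{0\} \to \omega \setminus \{0\}$ satisfying properties (1)--(3) from the proof of Lemma \ref{lem:process3} with respect to $\mathring{f}$. Pick $\delta < {\omega}_{1}$ so that $\Sigma, \Delta, p, M \in {\V}_{\delta}$, and use that ${\V}_{{\omega}_{1}}$ is an ${\omega}_{1}$-Cohen extension of $\V$ to choose $g \in {\omega}^{\omega} \cap {\V}_{{\omega}_{1}}$ which is $({\V}_{\delta}, {\omega}^{<\omega})$-generic. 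Let $q = {q}_{g}$ be the fusion produced by Lemma \ref{lem:fusion} from the sequence $\Sigma(g \restrict n)$ with bounds $\Delta(g \restrict n+1)$, so $q \; {\leq}_{\Delta(g \restrict n+1)} \; \Sigma(g \restrict n)$ for every $n$.

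The main obstacle is verifying $q \in {\P}_{1}$; this is handled exactly as at the end of the proof of Lemma \ref{lem:process3}. For each ${\alpha}_{0}, \ldots, {\alpha}_{k} < {\omega}_{1}$ and $n, m \in \omega$, the set of $\tau \in {\omega}^{<\omega} \setminus \{0\}$ for which $\Delta(\tau) - 1$ witnesses the uniform compactness condition ($\ast$) from that proof with respect to $\tau \restrict \lc\tau\rc - 1, {\alpha}_{0}, \ldots, {\alpha}_{k}, n, m$ is dense in ${\omega}^{<\omega}$ and lies in ${\V}_{\delta}$ (every code for ${X}_{\alpha}$ sits in $\V \subseteq {\V}_{\delta}$). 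So $g$ meets each such set, and thus $q$ contains at some level a creature witnessing this uniform ($\ast$); hence $q$ is a $1$-condition.

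For $(M,{\P}_{1})$-genericity, fix $D = {D}_{n} \in M$ and $r \leq q$. Strengthen $r$ to $r' \leq r$ deciding ${\mathring{d}}_{n} = d$, and pick ${n}_{0}$ large enough that $n < \Delta(g \restrict {n}_{0}+1)$ and $t := {s}^{r'} \setminus {s}^{p} \subseteq \bigcup_{m < \Delta(g \restrict {n}_{0}+1)} {s}^{\Sigma(g \restrict {n}_{0})}_{m}$; this is possible since the finite set $t$ lies in $\interior(\Sigma(g \restrict {n}_{0}))$ and $\Delta$ is unbounded by (2). Property (3) gives (${\dagger}_{1}$) with $\bar{p} = \Sigma(g \restrict {n}_{0}+1)$, $k = \Delta(g \restrict {n}_{0}+1)$, $p = \Sigma(g \restrict {n}_{0})$, and $\mathring{x} = \mathring{f}(n)$; applying it with the trivial $q = \bar{p}$ and with $r'$ (which satisfies $r' \leq \bar{p}$ because $r' \leq {q}_{g} \leq \bar{p}$) witnessing the hypothesis yields $\Sigma(g \restrict {n}_{0}+1)(t, \Delta(g \restrict {n}_{0}+1)) \; {\forces}_{1} \; \mathring{f}(n) = d$. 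This forcing condition lies in $M$ (all its defining data do), so by elementarity $d \in M$, whence $d \in D \cap M$; and $r' \; {\forces}_{1} \; d \in \mathring{G}$ yields $r' \leq d$, completing the verification.
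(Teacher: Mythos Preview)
Your argument is correct and is precisely the elaboration the paper has in mind when it says properness is an ``easy corollary of Lemma~\ref{lem:process3}'' and leaves details to the reader: you reuse the functions $\Sigma,\Delta$ from that proof, chosen in $M$ by elementarity, and observe that the condition $\Sigma(g\restrict n_0+1)(t,\Delta(g\restrict n_0+1))$ deciding $\mathring f(n)$ lies in $M$, which is the key point.

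Two small remarks. First, asking for $M\in\V_\delta$ is both unnecessary and awkward (since $M$ is a countable subset of $H(\lambda)$, not a real); you only need $\Sigma,\Delta\in\V_\delta$, which holds since each is essentially a real and the Cohen iteration is c.c.c. Second, the final inference ``$r'\;{\forces}_1\;d\in\mathring G$ yields $r'\leq d$'' requires separativity, which has not been established for $\P_1$; but you only need that $r'$ and $d$ are compatible, which is immediate and suffices to show $D_n\cap M$ is predense below $q$.
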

We next work towards showing that if ${\P}_{0}$ does not destroy $\A$, then ${\P}_{1}$ does not add dominating reals, and more.
\begin{Def} \label{def:positive}
	Fix $\mathring{f} \in {\V}^{{\P}_{1}}_{{\omega}_{1}}$ such that
${\forces}_{1} \; \mathring{f} \in {}^{\omega}{\left({\V}_{{\omega}_{1}}
\right)}$. Let $p \in {\P}_{1}$ satisfy (${\dagger}_{2}$) of Lemma
\ref{lem:process3} with respect to $\mathring{f}$. For each $i \in \omega$, 
define 
	\begin{align*}
		B(p, \mathring{f}, i) = \left\{t \in {\[\interior(p)\]}^{< \omega}: \exists k > {m}^{p}_{t} \exists x \in {\V}_{{\omega}_{1}}\[p(t,k) \; {\forces}_{1} \; {\mathring{f}(i) = x}\]\right\}. 
	\end{align*}
\end{Def}
Note that if $\mathring{f}$ and $p$ are as in Definition \ref{def:positive}, and
if $q \; {\leq}_{0} \; p$, then $q$ also satisfies (${\dagger}_{2}$) with
respect to $\mathring{f}$ and that $B(q, \mathring{f}, i) =
{\[\interior(q)\]}^{< \omega} \cap B(p, \mathring{f}, i)$, for each $i \in
\omega$.
\begin{Lemma} \label{lem:process4}
	Let $\mathring{f}$ and $p$ be as in Definition \ref{def:positive}. Fix
$k \in \omega \setminus \{0\}$. There exists $\bar{p} \; {\leq}_{k} \; p$ such
that
	\begin{align*}
		\forall t \subset {\bigcup}_{m \in [0, k)}{{s}^{p}_{m}} & \forall i < k \forall m \geq k \tag{${\dagger}_{3}$} \\ & \forall u \subset {s}^{\bar{p}}_{m}  \[{\nor}^{\bar{p}}_{m}(u) > 0 \implies \exists v \subset u \[t \cup v \in B(\bar{p}, \mathring{f}, i)\]\].
	\end{align*}
\end{Lemma}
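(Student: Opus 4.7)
The plan is to build $\bar{p}$ by constructing each creature $c_m^{\bar{p}}$ separately for $m \geq k$, keeping $c_j^{\bar{p}} = c_j^p$ for $j < k$. Enumerate the finite set $\mathcal{T} = \{(t, i) : t \subset \bigcup_{l < k} s_l^p,\, i < k\}$ as $(t_1, i_1), \dotsc, (t_N, i_N)$. Since $s^{\bar{p}} = s^p$ and, by the remark following Definition \ref{def:positive}, $B(\bar{p}, \mathring{f}, i) = B(p, \mathring{f}, i) \cap {[\interior(\bar{p})]}^{<\omega}$, the conclusion $({\dagger}_{3})$ reduces to ensuring that for each $m \geq k$ and each $(t_j, i_j) \in \mathcal{T}$, every $\nor_m^{\bar{p}}$-positive $u \subset s_m^{\bar{p}}$ contains some $v \subset u$ with $t_j \cup v \in B(p, \mathring{f}, i_j)$.

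For each $m \geq k$ I will produce $c_m^{\bar{p}}$ as a subset of some sufficiently late creature $c_{l_m}^p$, with $l_k < l_{k+1} < \dotsb$ strictly increasing and chosen so that $\nor_{l_m}^p(s_{l_m}^p) \geq N + 1$; this is possible since creature norms in $p$ are strictly increasing and unbounded. The shrinking iterates the following \emph{single-pair lemma}: for any $s \subset s_l^p$ with $\nor_l^p(s) \geq n + 1$ and any fixed $(t, i) \in \mathcal{T}$, there is $s' \subset s$ with $\nor_l^p(s') \geq n$ such that every $\nor_l^p$-positive $u \subset s'$ contains some $v \subset u$ with $t \cup v \in B(p, \mathring{f}, i)$. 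Processing the $N$ pairs in turn loses at most $N$ units of norm, which the choice of $l_m$ absorbs. The resulting creature $c_m^{\bar{p}} = \langle s_m^{\bar{p}}, \nor_{l_m}^p \restrict s_m^{\bar{p}}\rangle$, together with the increasing choice of indices $l_k, l_{k+1}, \dotsc$, provides the interval partition required for $\bar{p} \; {\leq}_{k} \; p$ via clauses (8)--(9) of Definition \ref{def:norm}.

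The single-pair lemma itself is proved using the bigness condition (Definition \ref{def:norm}(3)). Supposing no suitable $s'$ exists, iterated bigness extracts a ``bad'' positive-norm $u^* \subset s$, meaning no $v \subset u^*$ has $t \cup v \in B(p, \mathring{f}, i)$. Form the auxiliary condition $q$ with stem $s^p \cup t$, first creature $\langle u^*, \nor_l^p \restrict u^*\rangle$, and subsequent creatures $c_{l+1}^p, c_{l+2}^p, \dotsc$; then $q \in \P_1$ because $p$ is, the $\I(\A)$-norm condition being witnessed by creatures of arbitrarily large index (as in Corollary \ref{cor:ad}). A density argument produces $r \leq q$ deciding $\mathring{f}(i) = x$, and $({\dagger}_{2})$ applied to $p$ yields $s^r \setminus s^p = t \cup v' \in B(p, \mathring{f}, i)$; the desired contradiction then follows, provided $v' \subset u^*$.

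The main obstacle is precisely arranging $v' \subset u^*$: a priori a decision extension of $q$ may draw new stem elements from any of $q$'s creatures, not just from the first one. Securing this locality calls for a secondary fusion along $q$'s creatures, one level deeper than Lemmas \ref{lem:process}--\ref{lem:process3}, which selects a decision extension whose added stem is confined to $u^*$. Once this locality is established the single-pair lemma, and hence $({\dagger}_{3})$, follows; the resulting $\bar{p}$ is a 1-condition by the choice $l_m \to \infty$ together with the 1-condition property of $p$.
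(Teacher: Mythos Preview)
Your proposal has two genuine gaps, and the approach differs fundamentally from the paper's.

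\textbf{The locality problem is fatal, not a detail.} You correctly identify that after building the auxiliary condition $q$ (stem $s^p\cup t$, first creature $u^*$, later creatures from $p$) and extending to some $r\leq q$ deciding $\mathring f(i)$, the new stem $s^r\setminus s^p = t\cup w$ may have $w$ spilling into creatures beyond $u^*$. Applying $(\dagger_2)$ then only gives $t\cup w\in B(p,\mathring f,i)$, not $t\cup v\in B(p,\mathring f,i)$ for some $v\subset u^*$. Your suggested ``secondary fusion'' cannot fix this: any fusion below $q$ still produces conditions whose decision-witnessing stems may use later creatures, so the argument is circular. The underlying reason is that your single-pair lemma is simply not true in general: there is no guarantee that a witness $v$ for membership in $B(p,\mathring f,i)$ can be found inside a \emph{single} $p$-creature $s^p_l$, no matter how large $l$ is. The paper never attempts this. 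Instead it lets the creatures of $\bar p$ span \emph{many} $p$-creatures and equips them with a \emph{new} norm, the norm induced by the set $A$ of finite $u$'s which already contain, for every pair $(t,i)$, the required witness $v$. With this new norm, $\nor^{\bar p}_m(u)>0$ literally means $u\in A$, so $(\dagger_3)$ holds by definition; the work is pushed into showing that every $c\in\GG^+$ contains such a $u$, which is where Lemma~\ref{lem:main} and a genuine extension-and-decide argument are used.

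\textbf{The $1$-condition argument is wrong.} You claim $\bar p\in\P_1$ follows from ``$l_m\to\infty$ together with the $1$-condition property of $p$''. But for $a\in\I(\A)$ you need $\nor^p_{l_m}(s^{\bar p}_m\setminus a)$ large, and since $s^{\bar p}_m\subsetneq s^p_{l_m}$ was obtained by shrinking independently of $a$, monotonicity of the norm goes the wrong way: $\nor^p_{l_m}(s^{\bar p}_m\setminus a)\leq\nor^p_{l_m}(s^p_{l_m}\setminus a)$ tells you nothing. The paper handles this by choosing the creatures of $\bar p$ via a function $f$ generic for a Cohen-like poset $\P$ over some $\V_\delta$; the genericity (through the compactness condition $(\ast)$) guarantees that for every tuple $\alpha_0,\dotsc,\alpha_l$ and every $n$ there is a creature $s^{\bar p}_m$ containing a subset avoiding $a_0\cup\dotsb\cup a_l$ with norm $\geq n$. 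This use of the ambient Cohen reals is essential and has no counterpart in your outline.
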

\begin{proof}
	Let $A$ be the set of all $u \in {\[{\bigcup}_{n \in [k, \infty)}{{s}^{p}_{n}}\]}^{< \omega}$ such that
	\begin{enumerate}
		\item
			for some $m \in \omega$, ${\nor}^{p}_{m}({s}^{p}_{m} \cap u) > 0$
		\item
			for each $t \subset {\bigcup}_{m \in [0, k)}{{s}^{p}_{m}}$ and $i < k$, there exists $v \subset u$ such that $t \cup v \in B(p, \mathring{f}, i)$.
	\end{enumerate}
It is easy to see that for any $u \in A$, $\lc u \rc > 1$ and that if $u \subset
w$, then $w \in A$. Let $\GG$ denote the filter generated by ${\F}_{p} \cup
\F(\A)$. Note that $\GG$ is a proper filter. Fix $c \in {\GG}^{+}$. Then the
filter generated by $\GG \cup \{c\}$ is proper, and so by Lemma \ref{lem:main},
there is a $1$-condition $q \; {\leq}_{0} \; p$ such that $\interior(q) \subset
c$. Let ${n}_{0}$ be least such that for each $n \geq {n}_{0}$, ${s}^{q}_{n}
\subset {\bigcup}_{m \in [k, \infty)}{{s}^{p}_{m}}$, and
${\nor}^{q}_{n}({s}^{q}_{n}) > {\nor}^{p}_{k - 1}({s}^{p}_{k - 1})$. Define
$\bar{q}$ such that ${s}^{\bar{q}} = {s}^{q}$, for each $m \in [0, k)$,
${s}^{\bar{q}}_{m} = {s}^{p}_{m}$, and for all $m \in [k, \infty)$,
${s}^{\bar{q}}_{m} = {s}^{q}_{(m - k) + {n}_{0}}$. It is clear that $\bar{q}$ is
a $1$-condition and that $\bar{q} \;  {\leq}_{k} \; p$. Now, fix $t \subset
{\bigcup}_{m \in [0, k)}{{s}^{p}_{m}}$ and $i < k$. Find $r \leq \bar{q}(t, k)$
and $x \in {\V}_{{\omega}_{1}}$ such that 
$r \; {\forces}_{1} \; \mathring{f}(i) = x$. Let $v = {s}^{r} \setminus \left({s}^{p} \cup t \right)$ and note that since $p$ satisfies (${\dagger}_{2}$), $t \cup v \in B(p, \mathring{f}, i)$. Find $n(t, i) > {n}_{0}$ such that $v \subset {\bigcup}_{m \in [{n}_{0}, n(t, i))}{{s}^{q}_{m}}$. Put $n = \max{\left\{n(t, i): t \subset {\bigcup}_{m \in [0, k)}{{s}^{p}_{m}} \wedge i < k \right\}}$. Let
$u = {\bigcup}_{m \in [{n}_{0}, n)}{{s}^{q}_{m}}$. Observe that $u \in {\[{\bigcup}_{m \in [k, \infty)}{{s}^{p}_{m}}\]}^{< \omega}$. Since
${s}^{q}_{{n}_{0}} \subset u$, (1) is satisfied. Also by the way $n$ is chosen,
(2) is satisfied. Therefore $u \in A$. Since $u \subset \interior(q) \subset c$,
we conclude that for any $c \in {\GG}^{+}$, there is a $u \in A$ such that $u \subset c$.

	Now, let $\nor: \fin \rightarrow \omega$ be the norm induced by $A$,
defined exactly as in the proof of Lemma \ref{lem:main}. Arguing as in Lemma
\ref{lem:main}, it is easy to prove that for any $c \in {\GG}^{+}$ and $n \in \omega$, there is a $s \subset c$ with $\nor(s) \geq n$. Find a $\delta < {\omega}_{1}$ such that $p$ and $\nor$ are in ${\V}_{\delta}$. Working in ${\V}_{\delta}$, define a poset $\P$ as follows. For a non-empty set $u \in {\[\interior(p)\]}^{< \omega}$, ${m}^{u}$ and ${m}_{u}$ are defined as in the
proof of Lemma \ref{lem:main}. A condition in $\P$ is a function $\sigma:
\dom(\sigma) \rightarrow {\[\interior(p)\]}^{< \omega}$ such that
\begin{enumerate}
	\item[(3)]
		$\dom(\sigma) \in \omega$ and for each $i < \dom(\sigma)$, $\sigma(i) \subset {\bigcup}_{m \in [k, \infty)}{{s}^{p}_{m}}$ and $\nor(\sigma(i)) > {\nor}^{p}_{k - 1}({s}^{p}_{k - 1})$.
	\item[(4)]
		for each $i < i + 1 < \dom(\sigma)$, $\langle \sigma(i), \nor \restrict \sigma(i) \rangle < \langle \sigma(i + 1), \nor \restrict \sigma(i + 1) \rangle$, and ${m}^{\sigma(i)} < {m}_{\sigma(i + 1)}$.
\end{enumerate}
For $\sigma, \tau \in \P$, $\tau \leq \sigma$ if $\tau \supset \sigma$. Given
${\alpha}_{0}, \dotsc, {\alpha}_{l} < {\omega}_{1}$, $m, n \in \omega$, and
$({a}_{0}, \dotsc, {a}_{l}) \in {Y}_{{\alpha}_{0}} \times \dotsb \times
{Y}_{{\alpha}_{l}}$, there is a finite $u \subset \interior(p) \setminus m$ such
that $u \cap ({a}_{0} \cup \dotsb \cup {a}_{l}) = 0$ and $\nor(u) \geq n$. So by
a compactness argument, for each ${\alpha}_{0}, \dotsc, {\alpha}_{l} <
{\omega}_{1}$, and $m, n \in \omega$, there is a finite $s \subset \interior(p) \setminus m$ such that
	\begin{align*}
		\forall ({a}_{0}, \dotsc, {a}_{l}) \in {Y}_{{\alpha}_{0}} \times \dotsb \times {Y}_{{\alpha}_{l}} \exists u \subset s\[ u \cap ({a}_{0} \cup \dotsb \cup {a}_{l}) = 0 \wedge \nor(u) \geq n\]. \tag{$\ast$} 
	\end{align*} 
Observe that ($\ast$) is absolute between ${\V}_{\delta}$ and
${\V}_{{\omega}_{1}}$. For each ${\alpha}_{0}, \dotsc, {\alpha}_{l} <
{\omega}_{1}$ and $n \in \omega$, the set
\begin{align*}
	\{\tau \in \P: \exists i < \dom(\tau)\[\tau(i) \ \text{satisfies} \ (\ast) \ \text{with respect to} \ {\alpha}_{0}, \dotsc, {\alpha}_{l}, n\]\}
\end{align*}
is dense in $\P$. In ${\V}_{{\omega}_{1}}$, choose $f: \omega \rightarrow
{\[{\bigcup}_{m \in [k, \infty)}{{s}^{p}_{m}}\]}^{< \omega}$ which is
$({\V}_{\delta}, \P)$-generic. Define $\bar{p}$ as follows. ${s}^{\bar{p}} =
{s}^{p}$. For each $m \in [0, k)$, ${c}^{\bar{p}}_{m} = {c}^{p}_{m}$. For $m \in
[k, \infty)$, ${c}^{\bar{p}}_{m} = \langle f(m - k), \nor \restrict f(m - k) \rangle$. From the genericity of $f$, it follows that $\bar{p}$ is a
$1$-condition. Also, it is clear that $\bar{p} \; {\leq}_{k} \; p$. Now, suppose
that $t \subset {\bigcup}_{m \in [0, k)}{{s}^{p}_{m}}$ and $i < k$. Fix $m \geq
k$ and $u \subset {s}^{\bar{p}}_{m}$ with $\nor(u) > 0$. Then $u \in A$, and so
there is a $v \subset u$ such that $t \cup  v \in B(p, \mathring{f}, i)$. As
$B(\bar{p}, \mathring{f}, i) = {\[\interior(\bar{p})\]}^{< \omega} \cap B(p,
\mathring{f}, i)$, it follows that $t \cup v \in B(\bar{p}, \mathring{f}, i)$.
\end{proof}
Note that if $\bar{p}$ satisfies (${\dagger}_{2}$) with respect to $\mathring{f}$
and it satisfies (${\dagger}_{3}$) with respect to $\mathring{f}$ and $k$, then
any $q \; {\leq}_{k} \; \bar{p}$ also satisfies (${\dagger}_{3}$) with respect
to $\mathring{f}$ and $k$.
\begin{Lemma} \label{lem:process5}
	Let $p$ and $\mathring{f}$ be as in Definition \ref{def:positive}. There
is a $\bar{p} \; {\leq}_{0} \; p$ such that
		\begin{align*}
			& \text{for any} \ i \in \omega, \ \text{there is} \ k > i \ \text{such that for any} \tag{${\dagger}_{4}$} \\ & t \subset {\bigcup}_{m \in [0, k)}{{s}^{\bar{p}}_{m}}, j < k, m \geq k, \ \text{and} \ u \subset {s}^{\bar{p}}_{m}, \ \text{if} \ {\nor}^{\bar{p}}_{m}(u) > 0, \\ &\text{then there exists} \ v \subset u \ \text{such that} \ t \cup v \in B(\bar{p}, \mathring{f}, j).
		\end{align*} 
\end{Lemma}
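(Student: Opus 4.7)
The plan is to follow the argument of Lemma \ref{lem:process3} almost verbatim, with Lemma \ref{lem:process4} playing the role that Lemma \ref{lem:process2} played there. By recursion on $\sigma \in {\omega}^{< \omega}$, I would construct $\Sigma: {\omega}^{< \omega} \rightarrow {\P}_{1}$ and $\Delta: {\omega}^{< \omega} \setminus \{0\} \rightarrow \omega \setminus \{0\}$ satisfying: (i) $\Sigma(0) = p$, and for every $\sigma$ and $j$, $\Sigma({\sigma}^{\frown}{\langle j \rangle}) \; {\leq}_{\Delta({\sigma}^{\frown}{\langle j \rangle})} \; \Sigma(\sigma)$; (ii) $\Delta({\sigma}^{\frown}{\langle j \rangle}) > \Delta(\sigma)$, and for every $\sigma$ and every $N \in \omega$ some $j$ satisfies $\Delta({\sigma}^{\frown}{\langle j \rangle}) > N$; (iii) $\Sigma({\sigma}^{\frown}{\langle j \rangle})$ satisfies $({\dagger}_{3})$ with respect to $\mathring{f}$ and $k = \Delta({\sigma}^{\frown}{\langle j \rangle})$. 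Such functions exist because each $\Sigma(\sigma) \; {\leq}_{0} \; p$ still satisfies $({\dagger}_{2})$ by the remark following Definition \ref{def:positive}, so Lemma \ref{lem:process4} applies at every stage and can be invoked with arbitrarily large $k$.

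Given any $g \in \BS$, apply Lemma \ref{lem:fusion} with ${p}_{n} = \Sigma(g \restrict n)$ and ${k}_{n} = \Delta(g \restrict n + 1)$ to obtain a $0$-condition ${q}_{g}$ with ${q}_{g} \; {\leq}_{{k}_{n}} \; \Sigma(g \restrict n)$ for all $n$. In particular ${q}_{g} \; {\leq}_{0} \; p$. Suppose for the moment that $g$ has been chosen so that ${q}_{g} \in {\P}_{1}$ (the main obstacle, addressed below); then $({\dagger}_{2})$ holds for ${q}_{g}$ by the remark following Definition \ref{def:positive}. Given any $i \in \omega$, pick $n$ with ${k}_{n} > i$ and set $k = {k}_{n}$. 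Since $\Sigma(g \restrict n + 1)$ satisfies $({\dagger}_{3})$ with respect to $\mathring{f}$ and this $k$, and ${q}_{g} \; {\leq}_{k} \; \Sigma(g \restrict n + 1)$, the observation preceding the statement of Lemma \ref{lem:process5} implies that ${q}_{g}$ also satisfies $({\dagger}_{3})$ with this $k$; this is exactly the instance of $({\dagger}_{4})$ associated with $i$.

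It remains to secure ${q}_{g} \in {\P}_{1}$, which is the main new ingredient and is handled by a Cohen-genericity argument just as in the last third of the proof of Lemma \ref{lem:process3}. Pick a countable $\delta < {\omega}_{1}$ with $p, \Sigma, \Delta \in {\V}_{\delta}$ and regard ${\omega}^{< \omega}$, ordered by $\tau \leq \sigma$ iff $\tau \supset \sigma$, as Cohen forcing in ${\V}_{\delta}$. Fix $\sigma \in {\omega}^{< \omega}$, ${\alpha}_{0}, \dotsc, {\alpha}_{l} < {\omega}_{1}$, and $n, m \in \omega$. Because $\Sigma(\sigma)$ is a $1$-condition (noting that ${Y}_{{\alpha}_{0}} \cup \dotsb \cup {Y}_{{\alpha}_{l}} \subset \I(\A)$), for each $({a}_{0}, \dotsc, {a}_{l}) \in {Y}_{{\alpha}_{0}} \times \dotsb \times {Y}_{{\alpha}_{l}}$ there exist $i \in \omega$ and $t \subset {s}^{\Sigma(\sigma)}_{i}$ with ${\nor}^{\Sigma(\sigma)}_{i}(t) \geq n$ and $t \cap (m \cup {a}_{0} \cup \dotsb \cup {a}_{l}) = 0$; by the compactness argument used in the proof of Lemma \ref{lem:process3}, a single $j \in \omega$ bounds these $i$ uniformly over the compact product, yielding a statement analogous to $(\ast)$ of that proof which is absolute between ${\V}_{\delta}$ and ${\V}_{{\omega}_{1}}$. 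Consequently the set of $\tau \in {\omega}^{< \omega} \setminus \{0\}$ for which $\Delta(\tau) - 1$ realizes this statement with respect to $\tau \restrict \lc \tau \rc - 1, {\alpha}_{0}, \dotsc, {\alpha}_{l}, n, m$ is dense in ${\omega}^{< \omega}$. A $({\V}_{\delta}, {\omega}^{< \omega})$-generic $g \in {\V}_{{\omega}_{1}}$ meets every such set; since ${q}_{g}$ agrees with $\Sigma(g \restrict l)$ on all creatures indexed below $\Delta(g \restrict l + 1)$, this is enough for ${q}_{g}$ to inherit the $1$-condition property, completing the proof.
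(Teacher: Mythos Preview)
Your proof is correct and follows essentially the same approach as the paper's own proof: both build the trees $\Sigma$ and $\Delta$ via Lemma \ref{lem:process4}, fuse along a branch to get ${q}_{g}$, and then use the Cohen-genericity argument from Lemma \ref{lem:process3} to ensure ${q}_{g} \in {\P}_{1}$. The paper simply says ``by the same argument as in Lemma \ref{lem:process3}'' for the last part, whereas you spell out the details; otherwise the arguments coincide.
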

\begin{proof}
	Define two functions $\Sigma: {\omega}^{< \omega} \rightarrow {\P}_{1}$ and $\Delta: {\omega}^{< \omega} \setminus \{0\} \rightarrow \omega \setminus \{0\}$ with the following properties:
	\begin{enumerate}
		\item
			$\Sigma(0) = p$ and for each $\sigma \in {\omega}^{< \omega}$ and $j \in \omega$, $\Sigma({\sigma}^{\frown}{\langle j \rangle}) \; {\leq}_{\Delta({\sigma}^{\frown}{\langle j \rangle})} \; \Sigma(\sigma)$
		\item
			for each $\sigma \in {\omega}^{< \omega} \setminus \{0\}$, and for each $j \in \omega$, $\Delta({\sigma}^{\frown}{\langle j \rangle}) > \Delta(\sigma)$. Also, for each $\sigma \in {\omega}^{< \omega}$ and $k \in \omega$, there is a $j \in \omega$ such that $\Delta({\sigma}^{\frown}{\langle j \rangle}) > k$
		\item
			for each $\sigma \in {\omega}^{< \omega}$ and $j \in \omega$, $\Sigma({\sigma}^{\frown}{\langle j \rangle})$ satisfies (${\dagger}_{3}$) with respect to $\mathring{f}$ and $\Delta({\sigma}^{\frown}{\langle j \rangle})$.  
	\end{enumerate} 
By Lemma \ref{lem:process4} it is possible to find $\Sigma$ and $\Delta$ with
these properties. Note that for any $\sigma \in {\omega}^{< \omega}$,
$\Sigma(\sigma) \; {\leq}_{0} \; p$. Therefore, $\Sigma(\sigma)$ satisfies
(${\dagger}_{2}$) with respect to $\mathring{f}$. So Lemma \ref{lem:process4}
does apply to each $\Sigma(\sigma)$.

	For each $g \in \BS$, let ${q}_{g}$ be defined exactly as in the proof
of Lemma \ref{lem:process3}. By the same argument as in Lemma
\ref{lem:process3}, there exists $g \in \BS$ such that ${q}_{g} \in {\P}_{1}$.
We argue that putting $\bar{p} = {q}_{g}$ works. Fix $i \in \omega$. Find $n \in
\omega$ such that $\Delta(g \restrict n + 1) > i$. Recall that ${q}_{g} \;
{\leq}_{\Delta(g \restrict n + 1)} \; \Sigma(g \restrict n)$. Moreover, ${q}_{g} \; {\leq}_{\Delta(g \restrict n + 2)} \; \Sigma(g \restrict n + 1)$, and so
${q}_{g} \; {\leq}_{\Delta(g \restrict n + 1)} \; \Sigma(g \restrict n + 1)$. By
(3), $\Sigma(g \restrict n + 1)$ satisfies (${\dagger}_{3}$) with respect to
$\mathring{f}$ and $\Delta(g \restrict n + 1)$. Also, $\Sigma(g \restrict n +
1)$ satisfies (${\dagger}_{2}$) with respect to $\mathring{f}$. It follows that
${q}_{g}$ satisfies (${\dagger}_{3}$) with respect to $\mathring{f}$ and
$\Delta(g \restrict n + 1)$, and we are done.     
\end{proof}
\begin{Lemma} \label{lem:extensions}
Assume that ${\forces}_{0}{\A \ \text{is MAD}}$. Let $p \in {\P}_{1}$. There
exists $\{{a}_{n}: n \in \omega\} \subset \A$ and $\{{q}_{n}: n \in \omega\}
\subset {\P}_{0}$ such that
	\begin{enumerate}
		\item
			$\forall n < {n}^{\ast} \[{a}_{n} \neq
{a}_{{n}^{\ast}}\]$
		\item
			$\forall n \in \omega \[{q}_{n} \; {\leq}_{0} \; p \wedge \interior({q}_{n}) \subset {a}_{n}\]$
	\end{enumerate}
\end{Lemma}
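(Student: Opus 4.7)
The plan is proof by contradiction. Suppose only finitely many $a \in \A$ admit a $0$-condition $q \leq_{0} p$ with $\interior(q) \subseteq a$; enumerate them as $a_0, \ldots, a_{N-1}$ and set $a = a_0 \cup \cdots \cup a_{N-1} \in \I(\A)$. Since $p \in \P_1$, Definition~\ref{def:1condition} applied to $a$ yields arbitrarily large $\nor^p_m(s^p_m \setminus a)$, so picking a sparse strictly increasing sequence $k_n$ with $\nor^p_{k_n}(s^p_{k_n} \setminus a) \geq n+1$ I form the $0$-condition $q \leq_{0} p$ with $s^q = s^p$, $s^q_n = s^p_{k_n} \setminus a$ and $\nor^q_n$ the restriction of $\nor^p_{k_n}$, so that $\interior(q) \subseteq \omega \setminus a$.

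Since $\forces_{0} \A$ is MAD and $q \in \P_0$, I apply maximality to the canonical name $\mathring{X}$ for the $\P_0$-generic real to find $q' \leq q$ and $a^* \in \A$ with $q' \forces |a^* \cap \mathring{X}| = \omega$. Because $q' \forces \mathring{X} \subseteq s^{q'} \cup \interior(q')$, $\interior(q') \subseteq \interior(q) \subseteq \omega \setminus a$, and $s^{q'} \cap a_i \subseteq s^p \cap a_i$ is finite, $q' \forces |a_i \cap \mathring{X}| < \omega$ for each $i < N$; hence $a^* \notin \{a_0, \ldots, a_{N-1}\}$.

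The crux is to produce $q^* \leq_{0} p$ with $\interior(q^*) \subseteq a^*$, contradicting our opening assumption. Applying the subadditivity axiom (clause~(3) of Definition~\ref{def:norm}) to the splitting $s^{q'}_k = (s^{q'}_k \cap a^*) \cup (s^{q'}_k \setminus a^*)$ shows that for each $k$ at least one of the two sides has norm $\geq \nor^{q'}_k(s^{q'}_k) - 1$, which tends to $\infty$. If the ``$\setminus a^*$'' side wins unboundedly, a sparse choice $k_n$ with strictly increasing $\nor^{q'}_{k_n}(s^{q'}_{k_n} \setminus a^*)$ yields $q'' \leq_{0} q'$ with $s^{q''}_n = s^{q'}_{k_n} \setminus a^*$ and $\interior(q'') \subseteq \omega \setminus a^*$, whence $q'' \forces |a^* \cap \mathring{X}| < \omega$, contradicting $q'$. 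Therefore $\nor^{q'}_k(s^{q'}_k \cap a^*) \to \infty$ for cofinite $k$, and a sparse selection $k_n$ making these values strictly increasing defines the desired $q^*$ via $s^{q^*} = s^p$, $s^{q^*}_n = s^{q'}_{k_n} \cap a^*$, and $\nor^{q^*}_n = \nor^{q'}_{k_n} \restrict [s^{q^*}_n]^{<\omega}$. The main technical point is to verify $q^* \leq_{0} p$ rather than merely $\leq_{0} q'$: one composes $q'$'s interval partition relative to $p$ (clause~(8) for $q' \leq p$) with the selection $k_n$ to construct $q^*$'s interval partition relative to $p$, and checks clauses~(7) and~(9) transitively.
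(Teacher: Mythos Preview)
Your overall strategy --- contradiction reducing to the same ``given finitely many, find one more'' step as the paper's inductive construction --- is sound, and your final subadditivity argument is essentially the paper's in its simplest case. But there is a genuine gap at the step ``find $q' \leq q$ and $a^\ast \in \A$ with $q' \forces_0 |a^\ast \cap \mathring{X}| = \omega$.''

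Recall that $\A = \bigcup_{\alpha<\omega_1} X_\alpha$ with each $X_\alpha$ a closed subset of $\cube$, and by the paper's standing convention ``$\A$'' in any extension denotes the reinterpretation of these codes. The hypothesis $\forces_0\,\text{``}\A\text{ is MAD''}$ therefore says only that in $\V_{\omega_1}^{\P_0}$ the (possibly strictly larger) family $\bigcup_\alpha X_\alpha^{\V_{\omega_1}^{\P_0}}$ is MAD. From maximality you obtain $q' \leq q$ and $\alpha<\omega_1$ with $q' \forces_0 \exists a' \in X_\alpha\,[\,|a' \cap \mathring{X}| = \omega\,]$, but the witness $a'$ may be a new real in $X_\alpha^{\V_{\omega_1}^{\P_0}} \setminus \V_{\omega_1}$; you then cannot split the ground-model sets $s^{q'}_k$ against it.

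The paper closes this gap via Lemma~\ref{lem:main}. Since $r \forces_0 \mathring{X} \subset^\ast \interior(r)$ for every $r$, no $r \leq_0 q'$ can have $\interior(r)$ almost disjoint from every member of $X_\alpha$; thus condition~(1) of Lemma~\ref{lem:main} fails for $q'$, and so the filter generated by $\F_{q'} \cup \F(\A)$ is improper. This produces finitely many \emph{ground-model} elements $a^\ast_0,\ldots,a^\ast_k \in \A$ together with $b_0,\ldots,b_l \in C_{q'}$ and $i\in\omega$ such that $\interior(q') \setminus i \subset (\omega\setminus b_0)\cup\cdots\cup(\omega\setminus b_l)\cup a^\ast_0\cup\cdots\cup a^\ast_k$. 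Now iterated subadditivity applies: membership of each $b_j$ in $C_{q'}$ forces $\nor^{q'}_m(s^{q'}_m \setminus b_j) = 0$, so for each large $m$ some $a^\ast_{j_m}$ satisfies $\nor^{q'}_m(a^\ast_{j_m}\cap s^{q'}_m) \geq \nor^{q'}_m(s^{q'}_m) - (l+k+1)$, and a pigeonhole on $j_m$ singles out one $a_n = a^\ast_j$. Your two-piece split is exactly this argument when a single ground-model $a^\ast$ is available --- but Lemma~\ref{lem:main} is what supplies the ground-model candidates in the first place.
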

\begin{proof}
	Let $\mathring{x}$ be the canonical ${\P}_{0}$-name for the generic
subset of $\omega$ added by ${\P}_{0}$. Fix $n \in \omega$ and suppose that
$\{{a}_{i}: i < n\} \subset \A$ and $\{{q}_{i}: i < n\} \subset {\P}_{0}$ are
given. We will show how to get ${a}_{n}$ and ${q}_{n}$. Put $a = {\bigcup}_{i <
n}{{a}_{i}}$. Then $a \in \I(\A)$. Put $c = \interior(p) \setminus a$. As $p$ is
a $1$-condition, the filter generated by ${\F}_{p} \cup \F(\A) \cup \{c\}$ is
proper. Apply Lemma \ref{lem:main} to find a $1$-condition $\bar{p} \;
{\leq}_{0} \; p$ with $\interior(\bar{p}) \subset c$. Since ${\forces}_{0}{\A \ \text{is MAD}}$, there is a $0$-condition $q \leq \bar{p}$ and $\alpha < {\omega}_{1}$ such that ${q} \; {\forces}_{0} \; \exists {a}^{\ast} \in {X}_{\alpha} \[\lc {a}^{\ast} \cap \mathring{x} \rc = \omega \]$. Note that for any $r \in {\P}_{0}$, $ {r} \; {\forces}_{0} \; {\mathring{x} \; {\subset}^{\ast} \; \interior(r)}$. It follows that there can be no $r \in {\P}_{0}$ with $r \; {\leq}_{0} \; q$ such that $\forall {a}^{\ast} \in {X}_{\alpha} \[\lc \interior(r) \cap {a}^{\ast} \rc < \omega\]$. By Lemma \ref{lem:main}, this means that the filter generated by ${\F}_{q} \cup \F(\A)$ is not proper. Fix ${b}_{0}, \dotsc, {b}_{l} \in {C}_{q}$, ${a}^{\ast}_{0}, \dotsc {a}^{\ast}_{k} \in \A$, and $i \in \omega$ such that ${b}_{0} \cap \dotsb \cap {b}_{l} \cap (\omega \setminus {a}^{\ast}_{0}) \cap \dotsb \cap (\omega \setminus {a}^{\ast}_{k}) \cap \interior(q) \subset i$. Fix ${m}_{0} \in \omega$ such that for all $m \geq
{m}_{0}$, ${s}^{q}_{m} \cap i = 0$, and ${\nor}^{q}_{m}({s}^{q}_{m}) > \max\{l, k\} + 1$. As ${b}_{j} \in {C}_{q}$ for any $0 \leq j \leq l$, it follows that for any $m \geq {m}_{0}$ there is a ${j}_{m}$ with $0 \leq {j}_{m} \leq k$ such that ${\nor}^{q}_{m}\left({a}^{\ast}_{{j}_{m}} \cap {s}^{q}_{m} \right) \geq (m - {m}_{0}) + 1$. So there is an infinite $X \subset [{m}_{0}, \infty)$ and $0
\leq j \leq k$ such that for each $m \in X$, ${j}_{m} = j$. Put ${a}_{n} = {a}^{\ast}_{j}$. Note that ${a}_{n} \cap \interior(\bar{p}) \neq 0$, and so ${a}_{n} \neq {a}_{i}$ for any $i< n$. Define ${q}_{n}$ as follows. ${s}^{{q}_{n}} = {s}^{\bar{p}} = {s}^{p}$.
Choose ${l}_{0} < {l}_{1} < \dotsb$, with ${l}_{i} \in X$ such that
${\nor}^{q}_{{l}_{i}}\left({a}_{n} \cap {s}^{q}_{{l}_{i}}\right) <
{\nor}^{q}_{{l}_{i + 1}}\left({a}_{n} \cap {s}^{q}_{{l}_{i + 1}}\right)$. For
each $i \in \omega$, define ${c}^{{q}_{n}}_{i} = \langle {a}_{n} \cap
{s}^{q}_{{l}_{i}}, {\nor}^{q}_{{l}_{i}} \restrict \left({a}_{n} \cap
{s}^{q}_{{l}_{i}} \right) \rangle$. As $q \leq \bar{p}$, it is clear that
${q}_{n} \; {\leq}_{0} \; \bar{p} \; {\leq}_{0} \; p$. Also, $\interior({q}_{n})
\subset {a}_{n}$, and so ${q}_{n}$ and ${a}_{n}$ are as needed.     
\end{proof}
\begin{Lemma} \label{lem:niceform}
	Assume that ${\forces}_{0}{\A \ \text{is MAD}}$. Let $\mathring{f}$ be
as in Definition \ref{def:positive}. Suppose that $p \in {\P}_{1}$ satisfies
both $({\dagger}_{2})$ and $({\dagger}_{4})$ with respect to $\mathring{f}$.
There exists a $1$-condition $q \; {\leq}_{0} \; p$ and $\{{a}_{n}: n \in \omega\} \subset \A$ with the following properties:
	\begin{enumerate}
		\item
			for all $n < {n}^{\ast}$, ${a}_{n} \neq {a}_{{n}^{\ast}}$
		\item
			for each $n, l \in \omega$, $\forallbutfin m \in \omega \exists t \subset {s}^{q}_{m} \[{\nor}^{q}_{m}(t) \geq l \wedge t \subset {a}_{n}\]$
		\item
			for any $k \in \omega$, $t \subset {\bigcup}_{m \in [0, k)}{{s}^{q}_{m}}$, and $u \subset {s}^{q}_{k}$, if ${\nor}^{q}_{k}(u) > 0$, then there exists $v \subset u$ and $x \in {\V}_{{\omega}_{1}}$ such that $q(t \cup v, k + 1) \; {\forces}_{1} \; \mathring{f} (k) = x$.
	\end{enumerate}    
\end{Lemma}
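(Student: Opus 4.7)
The plan is to assemble $q$ by constructing each creature $c^q_m$ so that it simultaneously accommodates a large-norm piece of each $a_n$ for $n\leq m$ (yielding condition (2)) and inherits the decision property from $({\dagger}_{4})$ at the appropriate level (yielding condition (3)).

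First, I would apply Lemma \ref{lem:extensions} to obtain a pairwise distinct sequence $\{a_n : n \in \omega\} \subset \A$ together with $0$-conditions $\tilde{r}_n \leq_0 p$ such that $\interior(\tilde{r}_n) \subset a_n$. Using $({\dagger}_{4})$, I would then extract a strictly increasing sequence $(k_n)_{n \in \omega}$ with $k_n > n$ such that, for every $m$, whenever $j < k_m$, $m' \geq k_m$, $t \subset \bigcup_{i < k_m} s^p_i$, and $u \subset s^p_{m'}$ has $\nor^p_{m'}(u) > 0$, there exist $v \subset u$, $x \in \V_{\omega_1}$, and $k' > m^p_{t \cup v}$ with $p(t \cup v, k') \, {\forces}_{1} \, \mathring{f}(j) = x$.

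The construction of $q$ proceeds by recursion on $m$. At stage $m$, I would pick a finite interval $I_m \subset [k_m, \infty)$ lying strictly above the union of the previously chosen $I_{m'}$, together with indices $\ell(m, n)$ for each $n \leq m$, large enough that $s^{\tilde{r}_n}_{\ell(m,n)} \subset \bigcup_{m' \in I_m} s^p_{m'}$ and $\nor^{\tilde{r}_n}_{\ell(m,n)}\bigl(s^{\tilde{r}_n}_{\ell(m,n)}\bigr) \geq m$. Set $s^q_m := \bigcup_{n \leq m} s^{\tilde{r}_n}_{\ell(m,n)}$ and let $\nor^q_m$ be the truncation at level $m$ of the norm induced (in the sense used in the proof of Lemma \ref{lem:main}) by the family $A_m := \{t \subset s^q_m : \exists m' \in I_m \,(\nor^p_{m'}(t \cap s^p_{m'}) > 0)\}$. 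The essential combinatorial fact, proved by induction on $l$ using clause (9) of Definition \ref{def:norm} at the base step and the halving axiom for $\tilde{r}_n$ at the inductive step, is that $\nor^q_m(t) \geq \min(l, m)$ for every $t \subset s^{\tilde{r}_n}_{\ell(m,n)}$ with $\nor^{\tilde{r}_n}_{\ell(m,n)}(t) \geq l$.

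The verifications are then routine. Condition (1) comes from Lemma \ref{lem:extensions}. For (2), once $m \geq \max(n, l)$ the whole of $s^{\tilde{r}_n}_{\ell(m,n)}$ is a subset of $s^q_m \cap a_n$ with $\nor^q_m$-norm at least $l$. For (3), given $u \subset s^q_m$ with $\nor^q_m(u) > 0$, the definition of $A_m$ yields $m' \in I_m \subset [k_m, \infty)$ with $\nor^p_{m'}(u \cap s^p_{m'}) > 0$; applying $({\dagger}_{4})$ with $j = m < k_m$ produces $v \subset u \cap s^p_{m'}$, $k' > m^p_{t \cup v}$, and $x \in \V_{\omega_1}$ with $p(t \cup v, k') \, {\forces}_{1} \, \mathring{f}(m) = x$, and after enlarging $k'$ to any value exceeding $\max I_m$ one checks directly that $q(t \cup v, m+1) \leq p(t \cup v, k')$, whence $q(t \cup v, m+1) \, {\forces}_{1} \, \mathring{f}(m) = x$. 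That $q$ is a $1$-condition follows because, for any $a \in \I(\A)$, once $n$ is chosen so that $a_n$ is almost disjoint from every element of the finite subset of $\A$ whose union covers $a$, we have $s^{\tilde{r}_n}_{\ell(m,n)} \cap a = \emptyset$ for all sufficiently large $m$, and hence $\nor^q_m(s^q_m \setminus a)$ can be made arbitrarily large.

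The main difficulty is the simultaneous realization of conditions (2) and (3): condition (3) forces $s^q_m$ to live inside $p$-creatures of index at least $k_m$ so that $({\dagger}_{4})$ can decide $\mathring{f}(m)$, while condition (2) demands large-$\nor^q_m$ subsets inside each $a_n$, even though there is no a priori reason that $a_n$ intersect any single $s^p_{m'}$ in a set of large $\nor^p_{m'}$-norm. The resolution is to design $\nor^q_m$ as an induced norm whose positive sets are exactly those positive for some $\nor^p_{m'}$ with $m' \in I_m$, so that clause (9) is built in, while the large-norm witnesses for $a_n$ are inherited from the auxiliary conditions $\tilde{r}_n$ supplied by Lemma \ref{lem:extensions} rather than extracted from $p$ itself.
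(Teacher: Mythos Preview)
Your overall plan matches the paper's: apply Lemma~\ref{lem:extensions}, build each $s^q_m$ as a union of pieces $s^{\tilde r_n}_{\ell(m,n)}$ for $n\leq m$, and equip it with an induced norm so that clause~(9) is automatic while the large-norm witnesses inside $a_n$ are inherited from the $\tilde r_n$. The paper uses one global induced norm (from $A=\{s:\exists n,m\;\nor^{q_n}_m(s\cap s^{q_n}_m)>0\}$) rather than your per-stage $A_m$'s, but your variant works as well, and your ``essential combinatorial fact'' is correct with the inductive proof you indicate.

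The execution, however, has a genuine ordering problem. You fix the thresholds $(k_m)$ from $(\dagger_4)$ \emph{before} building any $I_m$, but to invoke $(\dagger_4)$ at stage $m$ you need the given $t$ to lie in $\bigcup_{i<k_m}s^p_i$; since $t$ ranges over subsets of $\bigcup_{i<m}s^q_i\subset\bigcup_{m'\leq\max I_{m-1}}s^p_{m'}$, you would need $\max I_{m-1}<k_m$, and there is no reason this holds when $k_m$ was chosen first. The same kind of issue hits your verification of (3): the sentence ``after enlarging $k'$ to any value exceeding $\max I_m$ one checks $q(t\cup v,m+1)\leq p(t\cup v,k')$'' requires $\min I_{m+1}\geq k'$, and since the relevant $k'$ is really the $k^m_{t\cup v}$ supplied by $(\dagger_2)$, you must also arrange $I_{m+1}$ above all of those values coming from stage $m$, which you never stipulate. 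The paper repairs both points by interleaving: at stage $i$ it first picks $l_0$ above everything constructed so far, applies $(\dagger_4)$ to $l_0$ to obtain the new threshold, then dominates all $k^j_{t\cup v}$ from earlier stages, and only then selects the new pieces above that bound. One minor side issue: truncating $\nor_{A_m}$ at level $m$ forces $\nor^q_0\equiv 0$, so $c^q_0$ is not a creature; drop the truncation and simply arrange $\nor(s^q_m)<\nor(s^q_{m+1})$ via the choice of the $\ell(m,n)$, as the paper does.
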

\begin{proof}
First apply Lemma \ref{lem:extensions} to $p$ to find $\{{a}_{n}: n \in
\omega\}$ and $\{{q}_{n}: n \in \omega\} \subset {\P}_{0}$ satisfying (1) and
(2) of Lemma \ref{lem:extensions}. Define $A = \{s \in \fin: \exists n \in
\omega \exists m \in \omega \[{\nor}^{{q}_{n}}_{m}(s \cap {s}^{{q}_{n}}_{m}) >
0\]\}$. Note that for any $s \in A$, $\lc s \rc > 1$ and that if $s \subset t$,
then $t \in A$. Moreover, for any $s \in A$, there is $m \in \omega$ such that
${\nor}^{p}_{m}(s \cap {s}^{p}_{m}) > 0$. Let $\nor: \fin \rightarrow \omega$ be
the norm on $\omega$ induced by $A$, defined as in the proof of Lemma
\ref{lem:main}. Note that for any $n, m \in \omega$ and $s \subset
{s}^{{q}_{n}}_{m}$, $\nor(s) \geq {\nor}^{{q}_{n}}_{m}(s)$. Next, recalling that
$p$ satisfies (${\dagger}_{2}$) with respect to $\mathring{f}$, for each $i \in
\omega$ and $t \in B(p, \mathring{f}, i)$, fix ${k}^{i}_{t} > {m}^{p}_{t}$ such
that $\exists x \in {\V}_{{\omega}_{1}}\[{p\left(t, {k}^{i}_{t} \right)} \;
{\forces}_{1} \; {\mathring{f}(i) = x} \]$.

	Now, to get $q$ proceed as follows. ${s}^{q} = {s}^{p}$. For each $i \in
\omega$ choose ${m}_{0}, \dotsc, {m}_{i} \in \omega$ such that putting
${s}^{q}_{i} = {s}^{{q}_{0}}_{{m}_{0}} \cup \dotsb \cup {s}^{{q}_{i}}_{{m}_{i}}$, the following properties hold:
	\begin{enumerate}
		\item[(4)]
			for each $i < i + 1$, $\max({s}^{q}_{i}) < \min({s}^{q}_{i + 1})$, $\max\{n \in \omega: {s}^{p}_{n} \cap {s}^{q}_{i} \neq 0\} < \min\{n \in \omega: {s}^{p}_{n} \cap {s}^{q}_{i + 1} \neq 0\}$, and $\nor({s}^{q}_{i}) < \nor({s}^{q}_{i + 1})$. (Recall that for all $j \in \omega$, ${q}_{j} \; {\leq}_{0} \; p$; therefore, for any fixed $i \in \omega$, ${s}^{{q}_{j}}_{{m}_{j}} \subset \interior(p)$, for each $0 \leq j \leq i$; also, ${s}^{{q}_{0}}_{{m}_{0}} \subset {s}^{q}_{i}$; therefore, ${s}^{q}_{i}$ is a non-empty finite subset of $\interior(p)$).
		\item[(5)]
			for each $i \in \omega$, for each $t \subset {\bigcup}_{m \in [0, i)}{{s}^{q}_{m}}$, for each $u \subset {s}^{q}_{i}$, if $\nor(u) > 0$, then there exists $v \subset u$ such that $t \cup v \in B(p, \mathring{f}, i)$.  
		\item[(6)]
			for each $i \in \omega$, each $t \subset {\bigcup}_{m \in [0, i)}{{s}^{q}_{m}}$, and each $v \subset {s}^{q}_{i}$ such that $t \cup v \in B(p, \mathring{f}, i)$, $\forall m \geq i + 1 \[{s}^{q}_{m} \subset {\bigcup}_{n \in \[{k}^{i}_{(t \cup v)}, \infty \right)}{{s}^{p}_{n}}\]$.
		\item[(7)]
			for each $i \in \omega$ and $0 \leq j \leq i$, ${\nor}^{{q}_{j}}_{{m}_{j}}\left({s}^{{q}_{j}}_{{m}_{j}}\right) \geq i$.	
	\end{enumerate} 
Before showing how to do this for each $i \in \omega$, let us argue that it is
enough to do so. First note that for any $j \in \omega$, ${q}_{j} \; {\leq}_{0}
\; p$, and so ${s}^{{q}_{j}} = {s}^{p}$. So since for any $i \in \omega$ and $l
\in {s}^{q}_{i}$, there is some $0 \leq j \leq i$ such that $l \in
{s}^{{q}_{j}}_{{m}_{j}}$, it follows that for all ${l}^{\ast} \in {s}^{q}$,
${l}^{\ast} < l$. Next, for any $i \in \omega$, ${s}^{{q}_{0}}_{{m}_{0}} \subset
{s}^{q}_{i}$. So $0 < {\nor}^{{q}_{0}}_{{m}_{0}}\left({s}^{{q}_{0}}_{{m}_{0}}\right) \leq
\nor\left({s}^{{q}_{0}}_{{m}_{0}}\right) \leq \nor({s}^{q}_{i})$. Therefore, if
we put ${c}^{q}_{i} = \langle {s}^{q}_{i}, \nor \restrict {s}^{q}_{i} \rangle$,
then $q = \langle {s}^{q}, \langle {c}^{q}_{i}: i \in \omega \rangle \rangle$ is
a $0$-condition, and $q \; {\leq}_{0} \; p$. To check (2), fix $n, l \in
\omega$. Suppose that $m \geq \max \{n, l\}$. Then there exists ${m}_{n} \in
\omega$ such that ${s}^{{q}_{n}}_{{m}_{n}} \subset {s}^{q}_{m}$, and ${\nor}^{{
q}_{n}}_{{m}_{n}} \left( {s}^{{q}_{n}}_{{m}_{n}}\right) \geq m \geq l$. However,
${s}^{{q}_{n}}_{{m}_{n}} \subset {a}_{n}$ and
$\nor\left({s}^{{q}_{n}}_{{m}_{n}}\right) \geq {\nor}^{{q}_{n}}_{{m}_{n}}\left({s}^{{q}_{n}}_{{m}_{n}}\right) \geq l$. This
verifies (2). 

	Using (2), it is easy to check that $q$ is a $1$-condition. With the
next lemma in mind, we will verify a slightly stronger statement. Fix $X \in
\cube$. Define ${q}_{X} = \langle {s}^{q}, \langle {c}^{q}_{i}: i \in X \rangle
\rangle$. It is clear that ${q}_{X}$ is a $0$-condition and that ${q}_{X} \leq
q$. We check that it is a $1$-condition. Fix $a \in \I(\A)$ and $l \in \omega$.
Fix $n, k \in \omega$ such that $a \cap {a}_{n} \subset k$. Choose $m \in X$
such that ${s}^{q}_{m} \cap k = 0$ and there exists $t \subset {s}^{q}_{m}$ such
that $t \subset {a}_{n}$ and ${\nor}^{q}_{m}(t) \geq l$. It is clear that $t
\cap a = 0$, and this checks that ${q}_{X}$ is a $1$-condition.   

	For (3), fix $i \in \omega$, $t \subset {\bigcup}_{m \in [0,
i)}{{s}^{q}_{m}}$, and $u \subset {s}^{q}_{i}$ such that ${\nor}^{q}_{i}(u) >
0$. By (5), there is a $v \subset u$ such that $t \cup v \in B(p, \mathring{f},
i)$. By (6), for each $m \geq i + 1$, ${s}^{q}_{m} \subset {\bigcup}_{n \in
\[{k}^{i}_{(t \cup v)}, \infty \right)}{{s}^{p}_{n}}$. Note that ${m}^{q}_{(t
\cup v)} \leq i < i + 1$ and ${k}^{i}_{\left( t \cup v \right)} >
{m}^{p}_{\left( t \cup v\right)}$ by definition. Since $q \; {\leq}_{0} \; p $,
it follows that ${q(t \cup v, i + 1)} \; {\leq}_{0} \; {p\left(t \cup v,
{k}^{i}_{\left(t \cup v \right)}\right)}$. Since there exists $x \in
{\V}_{{\omega}_{1}}$ such that ${p\left(t \cup v, {k}^{i}_{\left( t \cup v
\right)}\right)} \; {\forces}_{1} \; {\mathring{f}(i) = x}$, this verifies (3). 

	Finally, we show how to get such ${m}_{0}, \dotsc, {m}_{i} \in \omega$
for each $i \in \omega$. Fix $i \in \omega$, and assume that ${s}^{q}_{j}$ for
$j < i$ are given to us. First, fix ${k}_{0} \in \omega$ such that for each $0
\leq n \leq i$ and for each $k \geq {k}_{0}$,
${\nor}^{{q}_{n}}_{k}\left({s}^{{q}_{n}}_{k}\right) \geq i$ and $\forall j < i
\[\nor({s}^{q}_{j}) < {\nor}^{{q}_{n}}_{k}\left({s}^{{q}_{n}}_{k}\right)\]$.
Also fix ${l}_{0} \geq i$ such that for all $j < i$, ${s}^{q}_{j} \subset
{\bigcup}_{m \in [0, {l}_{0})}{{s}^{p}_{m}}$. Recall that $p$ satisfies
(${\dagger}_{4}$) with respect to $\mathring{f}$. Applying (${\dagger}_{4}$) to
${l}_{0}$, find ${k}_{1} > {l}_{0}$ as in (${\dagger}_{4}$). Next, choose
${k}_{2} \geq {k}_{1}$ such that for each $j < i$, $t \subset {\bigcup}_{m \in
[0, j)}{{s}^{q}_{m}}$, and $v \subset {s}^{q}_{j}$ such that $t \cup v \in B(p,
\mathring{f}, j)$, ${k}_{2} \geq {k}^{j}_{\left( t \cup v\right)}$. Finally,
recall that for each $0 \leq n \leq i$, ${q}_{n} \; {\leq}_{0}\; p$. So it is possible to choose ${k}_{3} \geq {k}_{0}$ such that for each $0 \leq n \leq i$ and each $k \geq {k}_{3}$, ${s}^{{q}_{n}}_{k} \subset {\bigcup}_{n \in \[{k}_{2}, \infty\right)}{{s}^{p}_{n}}$. Now choose ${m}_{0}, \dotsc, {m}_{i} \geq {k}_{3}$. It is easy to see that (4), (6), and (7) are satisfied. For (5), fix $t \subset {\bigcup}_{m \in [0, i)}{{s}^{q}_{m}}$, and $u \subset {s}^{q}_{i}$ with $\nor(u) > 0$. Note that $t \subset {\bigcup}_{m \in [0, {k}_{1})}{{s}^{p}_{m}}$ and that $i \leq {l}_{0} < {k}_{1}$. Moreover, ${s}^{q}_{i} \subset {\bigcup}_{n \in \[{k}_{2} , \infty \right)}{{s}^{p}_{n}}$.
So there exists $m \geq {k}_{2}$ such that ${\nor}^{p}_{m}(u \cap {s}^{p}_{m}) >
0$. We have that $m \geq {k}_{2} \geq {k}_{1}$, $u \cap {s}^{p}_{m} \subset
{s}^{p}_{m}$ and ${\nor}^{p}_{m}(u \cap {s}^{p}_{m}) > 0$. Therefore, by
(${\dagger}_{4}$), there is $v \subset u \cap {s}^{p}_{m} \subset u$ such that
$t \cup v \in B(p, \mathring{f}, i)$, and we are done.     
\end{proof}
\begin{Def} \label{def:almost}
	A poset $\P$ is said to be \emph{almost $\BS$-bounding} if for any $p
\in \P$ and $\mathring{f} \in {\V}^{\P}$ such that $\forces \mathring{f} \in
\BS$, there exist $q \leq p$ and $g \in \BS$ such that for any $X \in \cube$,
there exists ${q}_{X} \leq q$ such that ${{q}_{X}} \; \forces \;{ \existsinf n
\in X \[\mathring{f}(n) \leq g(n)\]}$.
\end{Def}
It is not difficult to see that an almost $\BS$-bounding poset preserves all
$\sigma$-directed unbounded families of monotonic functions in $\BS$. Shelah
proved that a countable support iteration of proper almost $\BS$-bounding posets
does not add a dominating real. 
He also proved that ${\P}_{0}$ is almost $\BS$-bounding (consult either
\cite{PIF} or \cite{Ab}).
\begin{Lemma}\label{lem:almost}
Assume that ${\forces}_{0}{\A \ \text{is MAD}}$. Then ${\P}_{1}$ is almost
$\BS$-bounding.
\end{Lemma}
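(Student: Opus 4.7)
The plan is to exploit the finite-decision structure of Lemma \ref{lem:niceform} together with a thinning of the creature sequence indexed by $X$. Given $p \in {\P}_{1}$ and a name $\mathring{f}$ with ${\forces}_{1} \mathring{f} \in \BS$, successively apply Lemmas \ref{lem:process3} and \ref{lem:process5} to find $p^\ast \; {\leq}_{0} \; p$ satisfying both $({\dagger}_{2})$ and $({\dagger}_{4})$ with respect to $\mathring{f}$; then apply Lemma \ref{lem:niceform} to obtain a $1$-condition $q \; {\leq}_{0} \; p^\ast$ and $\{{a}_{n} : n \in \omega\} \subset \A$ witnessing (1)--(3) of that lemma. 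Define $g \in \BS$ by
\begin{align*}
	g(k) = 1 + \max\left\{ x \in \omega : \exists t \subset \bigcup\nolimits_{m < k}{{s}^{q}_{m}}\, \exists v \subset {s}^{q}_{k}\, \left[q(t \cup v, k+1) \; {\forces}_{1} \; \mathring{f}(k) = x\right]\right\}.
\end{align*}
Since there are only finitely many pairs $(t,v)$ and each $q(t \cup v, k+1)$ decides $\mathring{f}(k)$ in at most one way, this maximum is taken over a finite set, nonempty by property (3) of Lemma \ref{lem:niceform}.

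Fix $X \in \cube$ and enumerate $X = \{{x}_{0} < {x}_{1} < \dotsb\}$. Put ${q}_{X} = \langle {s}^{q}, \langle {c}^{q}_{{x}_{j}} : j \in \omega \rangle\rangle$; by the argument in the proof of Lemma \ref{lem:niceform}, ${q}_{X}$ is a $1$-condition, and ${q}_{X} \; {\leq}_{0} \; q$ is immediate. To show ${q}_{X} \; {\forces}_{1} \; \existsinf n \in X\, [\mathring{f}(n) \leq g(n)]$, fix $r \leq {q}_{X}$ and $N \in \omega$, and seek $r' \leq r$ and $n \in X \cap [N, \infty)$ with $r' \; {\forces}_{1} \; \mathring{f}(n) \leq g(n)$. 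Let $\langle {i}_{j} : j \in \omega \rangle$ be the interval partition witnessing $r \leq {q}_{X}$, and choose ${k}^{\ast}$ so large that ${s}^{r} \setminus {s}^{q} \subset \bigcup_{j < {x}_{{i}_{{k}^{\ast}}}}{{s}^{q}_{j}}$ and ${x}_{{i}_{{k}^{\ast}}} \geq N$; such ${k}^{\ast}$ exists because ${s}^{r} \setminus {s}^{q}$ is a finite subset of $\interior(q)$. Applying clause (9) of $r \leq {q}_{X}$ to ${s}^{r}_{{k}^{\ast}}$ yields $m$ with ${\nor}^{{q}_{X}}_{m}({s}^{r}_{{k}^{\ast}} \cap {s}^{{q}_{X}}_{m}) > 0$, and by clause (8), $m \in [{i}_{{k}^{\ast}}, {i}_{{k}^{\ast}+1})$. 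Set $n = {x}_{m}$, $t = {s}^{r} \setminus {s}^{q}$, and $u = {s}^{r}_{{k}^{\ast}} \cap {s}^{q}_{n}$: then $n \geq N$, $t \subset \bigcup_{j < n}{{s}^{q}_{j}}$, and ${\nor}^{q}_{n}(u) > 0$. Property (3) of Lemma \ref{lem:niceform} now supplies $v \subset u$ and $x$ with $q(t \cup v, n+1) \; {\forces}_{1} \; \mathring{f}(n) = x$ and $x < g(n)$.

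Define $r'$ by ${s}^{r'} = {s}^{r} \cup v$ and ${c}^{r'}_{j} = {c}^{r}_{{k}^{\ast} + 1 + j}$; note that ${s}^{r'} = {s}^{q} \cup t \cup v = {s}^{q(t \cup v, n+1)}$. The checks that $r' \leq r$ and that $r' \in {\P}_{1}$ are routine: for the latter, given $a \in \I(\A)$ and $k \geq 1$, applying the $1$-condition property of $r$ to $a \cup \bigcup_{j \leq {k}^{\ast}}{{s}^{r}_{j}} \in \I(\A)$ forces the witnessing index to be at least ${k}^{\ast} + 1$, which is then a suitable index for $r'$. The main obstacle is the verification that $r' \leq q(t \cup v, n + 1)$: clauses (8) and (9) for this ordering must be obtained by translating $r$'s decomposition into ${q}_{X}$'s creatures through the index shift $i \mapsto i - n - 1$ inherent in $q(t \cup v, n + 1)$. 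The crucial observation is that $m \in [{i}_{{k}^{\ast}}, {i}_{{k}^{\ast}+1})$ forces ${i}_{{k}^{\ast}+1} \geq m + 1$, so ${x}_{m'} > n$ for every $m' \geq {i}_{{k}^{\ast}+1}$; this guarantees that the creatures ${c}^{r}_{{k}^{\ast}+1+j}$ live inside $\bigcup_{i \geq n+1}{{s}^{q}_{i}}$, making the index translation well-defined. Once $r' \leq q(t \cup v, n+1)$ is established, $r' \; {\forces}_{1} \; \mathring{f}(n) = x \leq g(n)$, completing the density argument and showing that ${\P}_{1}$ is almost $\BS$-bounding.
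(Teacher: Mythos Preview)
Your proof is correct and follows essentially the same route as the paper's: pass to a $q \;{\leq}_{0}\; p$ as in Lemma \ref{lem:niceform}, read off $g$ from the finitely many values decided by the conditions $q(t\cup v,k+1)$, set ${q}_{X}$ to be the thinning of $q$ along $X$, and for any $r \leq {q}_{X}$ produce the witness by locating an $r$-creature, dropping into a $q$-creature with index in $X$ via clause (9), and invoking property (3). The paper simply packages your condition $r'$ as $r(v, i+1)$ (in the notation of Definition \ref{def:assorted}) and declares the inequality $r(v,i+1) \leq q(t\cup v,k+1)$ to be clear rather than tracing through the interval partition; your more explicit bookkeeping of the partition $\langle i_j \rangle$ and the index shift amounts to spelling out exactly that claim.
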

\begin{proof}
Fix $\mathring{f} \in {\V}^{{\P}_{1}}_{{\omega}_{1}}$ such that ${\forces}_{1}\; {\mathring{f} \in \BS}$ and $p \in {\P}_{1}$. Find $q \; {\leq}_{0} \; p$ as
in Lemma \ref{lem:niceform}. Define $g \in \BS$ as follows. For any $k \in
\omega$ define $g(k) = \max({X}_{k})$, where
	\begin{align*}
		{X}_{k} = \left\{l \in \omega: \exists t \subset {\bigcup}_{m \in [0, k)}{{s}^{q}_{m}} \exists v \subset {s}^{q}_{k} \[q(t \cup v, k + 1) \; {\forces}_{1} \; \mathring{f}(k) = l \]\right\}.
	\end{align*} 
Note that ${X}_{k}$ is non-empty and finite, so $g(k)$ is well-defined. Now, fix
$X \in \cube$ and let ${q}_{X}$ be defined as in the proof of Lemma
\ref{lem:niceform}. Then ${q}_{X} \in {\P}_{1}$ and ${q}_{X} \leq q$. Fix $r
\leq {q}_{X}$ and $n \in \omega$. Fix ${k}^{\ast} \geq n$ such that $t = {s}^{r}
\setminus {s}^{q} \subset {\bigcup}_{m \in [0, {k}^{\ast})}{{s}^{q}_{m}}$.
Choose $i \in \omega$ such that ${s}^{r}_{i} \subset {\bigcup}_{m \in
[{k}^{\ast}, \infty)}{{s}^{q}_{m}}$. There must be $k \in X$ with $k \geq
{k}^{\ast}$ such that ${\nor}^{q}_{k}({s}^{q}_{k} \cap {s}^{r}_{i}) > 0$. It
follows that there exists $l \in {X}_{k}$  and $v \subset {s}^{q}_{k} \cap
{s}^{r}_{i}$ such that $q(t \cup v, k + 1) \; {\forces}_{1} \; \mathring{f}(k) =
l$. But it is clear that $r(v, i + 1) \leq q(t \cup v, k + 1)$. So $r(v, i + 1)
\leq r$ and $r(v, i + 1) \; {\forces}_{1} \; \mathring{f}(k) = l \leq g(k)$.
Since $k \in X$ and $k \geq n$, we are done.
\end{proof}
We now have all the lemmas needed to give a proof of
\begin{Theorem} \label{thm:main1}
It is consistent to have ${\aleph}_{1} = \b < \ac = {\aleph}_{2}$.
\end{Theorem}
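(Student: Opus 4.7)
The plan is to carry out a countable support iteration $\langle {\P}_{\alpha}, \mathring{\Q}_{\alpha} : \alpha < \omega_{2}\rangle$ of length $\omega_{2}$ over a ground model $\V \models \mathrm{GCH}$. A bookkeeping $\pi : \omega_{2} \to \omega_{2}$ enumerates, each occurring cofinally often, all nice ${\P}_{\omega_{2}}$-names for candidates $\A = \bigcup_{\beta < \omega_{1}}{X_{\beta}}$ that could witness $\ac = \aleph_{1}$ in the final extension. At stage $\alpha$, interpret $\pi(\alpha)$ in $\V^{{\P}_{\alpha}}$ to obtain such a candidate (using a fixed default iterand if it fails to produce one). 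If ${\P}_{0}$, as defined in $\V^{{\P}_{\alpha}}$, already destroys $\A$, set $\mathring{\Q}_{\alpha} = {\P}_{0}$; otherwise set $\mathring{\Q}_{\alpha} = \CC_{\omega_{1}} \ast \mathring{{\P}}_{1}$, where $\CC_{\omega_{1}}$ adds $\omega_{1}$ Cohen reals (so the resulting extension realises the ``$\V_{\omega_{1}}$'' of Definitions \ref{def:filter}--\ref{def:assorted}), and ${\P}_{1}$ is the creature poset built from the reinterpretation of $\A$ in that Cohen extension. Non-emptiness of ${\P}_{1}$ and its destruction of $\A$ come from Corollary \ref{cor:ad}.

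To verify the iteration, each iterand is checked to be proper and almost $\BS$-bounding: ${\P}_{0}$ by the standard argument in \cite{PIF,Ab}, Cohen forcing by a one-line density argument (any condition extends to force $c(n) = 0$ for arbitrarily large $n$ in any prescribed infinite $X$), and ${\P}_{1}$ by Corollary \ref{cor:proper} together with Lemma \ref{lem:almost}. The hypothesis ${\forces}_{0}{\A \text{ is MAD}}$ of Lemma \ref{lem:almost} is available in the Cohen extension precisely because we enter the ``Case 2'' branch only when ${\P}_{0}$ fails to destroy $\A$ in $\V^{{\P}_{\alpha}}$ (passing to a suitable condition of $\CC_{\omega_{1}}$ transfers this to the Cohen extension). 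Two-step iterations of proper almost $\BS$-bounding posets inherit both properties, so Shelah's preservation theorem for CS iterations of such posets (see \cite{PIF,Ab}) gives that ${\P}_{\omega_{2}}$ is proper and adds no dominating real. Hence any $\b$-witness from $\V$ remains unbounded in $\V^{{\P}_{\omega_{2}}}$, giving $\b = \aleph_{1}$ there.

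For $\ac = \aleph_{2}$: clearly $\ac \leq \c = \aleph_{2}$. Conversely, any candidate $\A$ in $\V^{{\P}_{\omega_{2}}}$ has all $\aleph_{1}$ of its closed codes appearing in some $\V^{{\P}_{\alpha}}$ with $\alpha < \omega_{2}$ (by $\aleph_{2}$-cc and GCH-bookkeeping), so $\A$ is considered at some $\alpha' \geq \alpha$ and destroyed by $\mathring{\Q}_{\alpha'}$ by construction. Destruction persists to the final model because reinterpreting closed codes $X_{\beta}$ in a larger model only enlarges them, so a set $b$ almost disjoint from every member of every (reinterpreted) $X_{\beta}$ at stage $\alpha' + 1$ retains this property at stage $\omega_{2}$. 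Thus no such $\A$ is MAD in $\V^{{\P}_{\omega_{2}}}$, completing the proof.

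The main technical obstacle is the Case-2 interleaving of Cohen forcing with the creature poset ${\P}_{1}$: the auxiliary Cohen reals are indispensable to Lemmas \ref{lem:p} and \ref{lem:ad}, hence to the whole analysis of ${\P}_{1}$, yet the interleaving must respect the preservation theorem and must make the ``${\forces}_{0}{\A \text{ is MAD}}$'' hypothesis of Lemma \ref{lem:almost} available exactly when ${\P}_{1}$ is introduced. Balancing these requirements is the technical heart of the construction, precisely as in \cite{PIF}; once it is in place, the rest is routine CS-iteration diagonalisation.
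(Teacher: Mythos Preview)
Your overall architecture is the paper's, but there are two genuine gaps.

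\textbf{The case split must be made \emph{after} the Cohen reals, not before.} In your ``Case~2'' you decide in $\V^{\P_\alpha}$ that $\P_0$ does not destroy $\A$, and then commit to the iterand $\CC_{\omega_1}\ast\mathring{\P}_1$. But the hypothesis of Lemma~\ref{lem:almost} is that ${\forces}_{0}\,\A$ is MAD \emph{in the Cohen extension} $\V_{\omega_1}$; the poset $\P_0$ computed there has strictly more conditions than the ground-model $\P_0$, so ``$\P_0$ fails to destroy $\A$ in $\V^{\P_\alpha}$'' does not imply the same in $\V_{\omega_1}$. Your parenthetical ``passing to a suitable condition of $\CC_{\omega_1}$ transfers this'' is exactly the step that does not follow. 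Moreover, the Cohen reals themselves may already kill $\A$; in that case Corollary~\ref{cor:ad} fails and there are no $1$-conditions at all, so $\P_1$ is empty. The paper avoids both problems by first forcing with $\CC_{\omega_1}$ and \emph{then}, inside the Cohen extension, deciding among three sub-cases: $\A$ already non-MAD (use just $\CC_{\omega_1}$); some $p\in\P_0$ forces $\A$ non-MAD (use $\CC_{\omega_1}\ast(\P_0\text{ below }p)$); or ${\forces}_{0}\,\A$ MAD (use $\CC_{\omega_1}\ast\P_1$). Your two-case split collapses these and loses precisely the hypothesis that drives Lemma~\ref{lem:almost}.

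\textbf{The persistence argument is backwards.} You write that reinterpreting the closed codes $X_\beta$ in a larger model only enlarges them, ``so'' a $b$ almost disjoint from every member at stage $\alpha'+1$ retains this property at stage $\omega_2$. But enlarging $X_\beta$ makes the universal statement $\forall a\in X_\beta\,(|a\cap b|<\omega)$ \emph{harder}, not easier, to satisfy; your implication points the wrong way. The correct argument (and the one the paper gives) is that for each fixed $\beta$ the statement $\forall a\in X_\beta\,(|a\cap b|<\omega)$ is $\Pi^1_1$ in the code for $X_\beta$ and in $b$, hence upward absolute by Shoenfield.

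With these two corrections your outline becomes the paper's proof.
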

\begin{proof}
	Start with a ground model satisfying $\CH$.
Fixing a book-keeping device to ensure that all names for witnesses to $\ac =
{\aleph}_{1}$ are eventually taken care of, do a CS iteration $\langle
{\P}_{\alpha}, {\mathring{\Q}}_{\alpha}: \alpha \leq {\omega}_{2} \rangle$ of
proper almost $\BS$-bounding posets as follows.
At a stage $\alpha < {\omega}_{2}$ suppose that ${\P}_{\alpha}$ is given.
Let ${G}_{\alpha}$ be $(\V, {\P}_{\alpha})$-generic.
In $\V\[{G}_{\alpha}\]$, let $\langle {X}^{\alpha}_{\xi}: \xi <
{\omega}_{1}\rangle$ be a sequence of non-empty closed subsets of $\cube$ given
by the book-keeping device.
If $\A = {\bigcup}_{\xi < {\omega}_{1}}{{X}^{\alpha}_{\xi}}$ is not MAD, then let
${\Q}_{\alpha}$ be the trivial poset. 
Now assume that $\A$ is MAD.
Let ${\CC}_{{\omega}_{1}}$ be the poset for adding ${\omega}_{1}$ Cohen reals.
Let $H$ be $(\V\[{G}_{\alpha}\], {\CC}_{{\omega}_{1}})$-generic.
If $\A$ is not MAD in $\V\[{G}_{\alpha}\]\[H\]$, then in $\V\[{G}_{\alpha}\]$,
let ${\Q}_{\alpha} = {\CC}_{{\omega}_{1}}$.
Suppose $\A$ is MAD in $\V\[{G}_{\alpha}\]\[H\]$.
In $\V\[{G}_{\alpha}\]\[H\]$, if there exists $p \in {\P}_{0}$ such that $p \;
{\forces}_{0} \; \A \ \text{is not MAD}$, then let $\R = \{q \in {\P}_{0}: q \leq p\}$.
If ${\forces}_{0} \A \ \text{is MAD}$, then let $\R = {\P}_{1}$ (defined with respect to $\A$).
In either case, in $\V\[{G}_{\alpha}\]$ let $\mathring{\R}$ be a full
${\CC}_{{\omega}_{1}}$ name for $\R$.
Let ${\Q}_{\alpha} = {\CC}_{{\omega}_{1}} \ast \mathring{\R}$.
Note that in all of these cases ${\forces}_{{\Q}_{\alpha}}{\A \ \text{is not
MAD}}$.
In $\V$, let ${\mathring{\Q}}_{\alpha}$ be a full ${\P}_{\alpha}$ name for
${\Q}_{\alpha}$.
This completes the definition of the iteration.
If ${G}_{{\omega}_{2}}$ is $(\V, {\P}_{{\omega}_{2}})$ generic, then since
${\P}_{{\omega}_{2}}$ does not add a dominating real, $\b = {\omega}_{1}$ in
$\V\[{G}_{{\omega}_{2}}\]$.
Suppose for a contradiction that $\langle {X}_{\xi}: \xi < {\omega}_{1} \rangle$
is a sequence of non-empty closed subsets of $\cube$ such that $\A =
{\bigcup}_{\xi < {\omega}_{1}}{{X}_{\xi}}$ is MAD.
For some $\alpha < {\omega}_{2}$, the book-keeping device ensured that $\langle
{X}_{\xi}: \xi < {\omega}_{1} \rangle$ was considered at stage $\alpha$.
So there is a set $c \in \cube \cap \V\[{G}_{\alpha + 1}\]$ such that in
$\V\[{G}_{\alpha + 1}\]$, for each $\xi < {\omega}_{1}$, $c$ is almost disjoint
from every element of ${X}_{\xi}$. 
For any fixed $\xi < {\omega}_{1}$, this statement is ${\Pi}^{1}_{1}$ and hence
absolute.
So in $\V\[{G}_{{\omega}_{2}}\]$, for any $\xi < {\omega}_{1}$, $c$ is almost
disjoint from every element of ${X}_{\xi}$.
This is a contradiction. 
\end{proof}

\section{A characterization of ${\P}_{0}$} \label{sec:twostep}
	In this section we show that the poset ${\P}_{0}$ defined in Section
\ref{sec:blessthanaclosed}, which was used by Shelah in \cite{b<s} to produce
the first consistency proof of $\b < \s$, can be viewed as a two step iteration
of a countably closed forcing followed by a $\sigma$-centered poset. 
\begin{Def} \label{def:f}
	Let $\FF = \{\F: \F \ \text{is a proper} \ {F}_{\sigma} \ \text{filter
on} \ \omega\}$. Recall our convention that all filters are required to contain
the Fr{\'e}chet filter. We order $\FF$ by $\supset$. It is clear that $\FF$ is
countably closed and adds an ultrafilter on $\omega$. Let $\mathring{\U}$ denote
the canonical $\FF$-name for the ultrafilter added by $\FF$. For any filter
$\U$, let $\Mi(\U)$ denote the Mathias-Prikry forcing with $\U$. 
\end{Def} 
In this section we will prove that ${\P}_{0}$ is forcing equivalent to $\FF \ast
\Mi(\mathring{\U})$. This is entirely analogous to the characterization of
Mathias forcing as first adding a selective ultrafilter with $\Pset(\omega) /
\FIN$ and then doing Mathias-Prikry forcing with that selective ultrafilter.
Note that $\Pset(\omega) / \FIN$ is forcing equivalent to the partial order of
all countably generated filters on $\omega$ ordered by $\supset$. So $\FF$ is a
natural generalization of $\Pset(\omega) / \FIN$. Our first lemma is rather
well-known.
\begin{Lemma} \label{lem:closed}
	Let $\F$ be a proper ${F}_{\sigma}$ filter on $\omega$. There is a
non-empty closed set $C \subset \Pset(\omega)$ such that $C \subset \F$ and
$\forall b \in \F \exists c \in C \[c \; {\subset}^{\ast} \; b\]$.
\end{Lemma}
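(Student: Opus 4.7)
My plan is to exploit the $F_\sigma$ structure of $\F$. Write $\F = \bigcup_{n \in \omega} K_n$ where the $K_n$ are an increasing sequence of compact (equivalently, closed) subsets of $\Pset(\omega)$. A preliminary step is to arrange, by replacing each $K_n$ with the continuous image of $K_n^{n+1}$ under iterated intersection and then re-indexing, that each $K_n$ is closed under pairwise intersections (inside $K_{n+1}$, say); this uses that $\F$ is a filter, so finite intersections of members of $K_n$ still belong to $\F$, and that the intersection map on $\Pset(\omega)^k$ is continuous and hence sends compact sets to compact sets.

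The heart of the argument is then to produce, for each $n$, an element $c_n \in \F$ such that $c_n \subset^{\ast} a$ for every $a \in K_n$. Since $K_n$ is compact and metrizable, fix a countable dense subset $\{a^n_k : k \in \omega\} \subseteq K_n$ and form the decreasing sequence $b^n_k := a^n_0 \cap \dotsb \cap a^n_k \in \F$. Now $c_n$ is built by a standard diagonalization yielding $c_n \subset^{\ast} b^n_k$ for every $k$; the crucial verification, which uses compactness of $K_n$ together with the intersection-closure assumption on $K_n$, is that this diagonal pseudo-intersection in fact satisfies $c_n \subset^{\ast} a$ for every $a \in K_n$ (and not only for the dense subfamily).

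Finally, I would arrange the $c_n$ to form a $\subseteq$-decreasing chain and, by padding each $c_n$ with an appropriate finite initial segment, ensure that the sequence $\{c_n\}_{n \in \omega}$ converges pointwise in $\Pset(\omega)$ to $\omega \in \F$. The closure $C := \{c_n : n \in \omega\} \cup \{\omega\}$ is then a non-empty closed subset of $\F$, and for any $b \in \F$ one picks $n$ with $b \in K_n$, so that $c_n \in C$ satisfies $c_n \subset^{\ast} b$.

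The main obstacle is the passage from $\subset^{\ast}$ over a dense subset of $K_n$ to $\subset^{\ast}$ over the whole compact $K_n$: this is where both the compactness of $K_n$ and the closure of $K_n$ under intersections really do work. A secondary delicate point is ensuring the closure of $\{c_n\}$ stays inside $\F$, handled by the finite-initial-segment padding that forces the only new limit point to be $\omega$.
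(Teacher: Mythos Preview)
Your approach has a genuine gap at precisely the step you flag as ``the main obstacle.'' The goal of your step 3 is to produce, for each compact $K_n \subset \F$, an element $c_n \in \F$ with $c_n \subset^{\ast} a$ for every $a \in K_n$. Such a $c_n$ need not exist. Take $\F$ to be the dual of the summable ideal, $\F = \{A \subset \omega : \sum_{n \notin A} 1/(n+1) < \infty\}$, with its natural $F_\sigma$ decomposition $K_M = \{A : \sum_{n \notin A} 1/(n+1) \leq M\}$. For any infinite $c$ whatsoever, choose an infinite $B \subset c$ with $\sum_{n \in B} 1/(n+1) \leq 1$; then $A := \omega \setminus B \in K_1$ and $c \setminus A = B$ is infinite, so $c \not\subset^{\ast} A$. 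Thus $K_1$ has no $\subset^{\ast}$-lower bound among infinite sets, let alone one in $\F$. Neither compactness of $K_n$ nor closure under finite intersections rescues the verification: your diagonal pseudo-intersection $c_n$ will in general fail $c_n \subset^{\ast} a$ for some $a \in K_n$, and independently there is no reason for a pseudo-intersection of a decreasing sequence in $\F$ to lie in $\F$, so $C \subset \F$ is unjustified as well.

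The paper's argument sidesteps all of this with a one-line construction: write $\F = \bigcup_n \T_n$ with each $\T_n$ closed and set $C = \{b \cup n : n \in \omega,\ b \in \T_n\}$. Each $b \cup n$ differs from $b \in \F$ by a finite set, so $C \subset \F$, and $b \cup n \subset^{\ast} b$ gives the cofinality clause for free. Closedness is checked directly: along a convergent sequence $b_i \cup n_i$, either the $n_i$ are eventually a fixed $n$ (and the limit lies in the continuous image of $\T_n$) or $n_i \to \infty$ (and the limit is $\omega \in \F$). The key conceptual difference is that the paper's $C$ is essentially a copy of $\F$ itself, finitely perturbed; your attempt to compress each compact piece $K_n$ down to a single witness $c_n$ asks for strictly more than an $F_\sigma$ filter can deliver.
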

\begin{proof}
	Write $\F = {\bigcup}_{n \in \omega}{{\T}_{n}}$, where each ${\T}_{n}$
is a closed subset of $\Pset(\omega)$. Let $C = \{b \cup n: n \in \omega \wedge
b \in {\T}_{n}\}$. It is clear that $\forall b \in \F \exists c \in C \[c \;
{\subset}^{\ast} \; b\]$ and that $C \subset \F$. Note also that $\omega \in C$.
We will check that $C$ is closed. Suppose $\langle {c}_{i}: i \in \omega
\rangle$ is a sequence of elements of $C$ converging to some $c \in
\Pset(\omega)$. For each $i \in \omega$ fix ${n}_{i} \in \omega$ and ${b}_{i}
\in {\T}_{{n}_{i}}$ such that ${c}_{i} = {b}_{i} \cup {n}_{i}$. By passing to a
subsequence, we may assume that the ${b}_{i}$ converge to some $b \in
\Pset(\omega)$ and that either $\forall i \in \omega \[{n}_{i} < {n}_{i + 1}\]$
or there is a fixed $n \in \omega$ such that $\forall i \in \omega \[{n}_{i} =
n\]$. In the first case $c = \omega$, and so $c \in C$. In the second case, each
${b}_{i} \in {\T}_{n}$ and so $b \in {\T}_{n}$. $c = b \cup n$, whence $c \in
C$.
\end{proof}
\begin{Theorem} \label{thm:p0isfstarmu}
There is a dense embedding of ${\P}_{0}$ into $\FF \ast \Mi(\mathring{\U})$.
\end{Theorem}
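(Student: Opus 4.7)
The plan is to exhibit a dense embedding $\iota : \P_0 \to \FF \ast \Mi(\mathring{\U})$ given by $\iota(p) = \bigl(\F_p,\; \langle s^p, \interior(p)\rangle\bigr)$, where the second coordinate is understood as the canonical $\FF$-name for a Mathias--Prikry condition; this is a condition of the iteration because $\F_p$ is a proper $F_\sigma$ filter (the remark following Definition \ref{def:filter}) and $\interior(p) \in \F_p \subseteq \mathring{\U}$. Order preservation is direct: if $q \le p$ in $\P_0$, then clause (9) of Definition \ref{def:norm} gives $A_q \subseteq A_p$ (any positive-$\nor^q$ subset $t$ of some $s^q_n$ satisfies $\nor^p_m(t \cap s^p_m) > 0$ for some $m$), hence $C_q \supseteq C_p$ and $\F_q \supseteq \F_p$; clauses (7)--(8) yield $s^q \supseteq s^p$, $s^q \setminus s^p \subseteq \interior(p)$, and $\interior(q) \subseteq \interior(p)$, which is precisely the Mathias--Prikry extension $\langle s^q, \interior(q)\rangle \le \langle s^p, \interior(p)\rangle$.

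The heart of the argument is density of the image. Using the countable closure of $\FF$ and Lemma \ref{lem:closed} to reduce names, it suffices to begin with $(\F, \langle s, A\rangle)$ where $\F \in \FF$, $s \in \fin$, $A \in \F$, $\max(s) < \min(A)$, and to construct $p \in \P_0$ with $s^p = s$, $\interior(p) \subseteq A$, and $\F_p \supseteq \F$. Writing $\F = \bigcup_n \T_n$ with $\T_n \subseteq \T_{n+1}$ compact, I would pick pairwise disjoint, strictly increasing finite sets $s^p_n \subseteq A$ recursively with $\min(s^p_0) > \max(s)$, and equip each with the norm $\nor^p_n$ on $s^p_n$ induced (in the sense of Lemma \ref{lem:main}) by $H_n = \{t \in \fin : |t| \ge 2 \text{ and } \forall b \in \T_n [t \cap b \ne \emptyset]\}$, arranging $\nor^p_n(s^p_n) \ge n+1$. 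The König-style induction of Lemma \ref{lem:main} reduces the existence of such $s^p_n$ to the base case: for any $A' \in \F^+$, some finite $t \subseteq A'$ lies in $H_n$. This is a compactness argument: for each $b \in \T_n$ select $m_b \in A' \cap b$ (nonempty since $A' \cap b \in \F^+$) and extract a finite subcover of $\T_n$ from the clopen neighborhoods $\{\{b' : m_b \in b'\} : b \in \T_n\}$; the $m_b$'s of the subcover form $t$, enlarged by one element of $A'$ if needed to force $|t| \ge 2$.

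That $\F_p \supseteq \F$ is read off from the construction: given $b \in \F$, pick $n$ with $b \in \T_n$ and set $F_n = s^p_0 \cup \dotsb \cup s^p_{n-1}$. Every $s \in A_p$ disjoint from $F_n$ witnesses $\nor^p_m(s \cap s^p_m) > 0$ for some $m \ge n$, so $s \cap s^p_m \in H_m \subseteq H_n$ and meets $b$. Hence $(\omega \setminus b) \setminus F_n$ contains no $A_p$-element, $b \cup F_n \in C_p \subseteq \F_p$, and $b \in \F_p$ because $F_n$ is finite and $\F_p$ contains the Fréchet filter.

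The hardest step is reflecting incompatibility, i.e., showing that a common lower bound $(\F, \langle s, A\rangle)$ of $\iota(p)$ and $\iota(q)$ in $\FF \ast \Mi(\mathring{\U})$ yields a common $\P_0$-extension $r \le p, q$. My plan is to rerun the density construction with $\F \supseteq \F_p \cup \F_q$, stem $s$, and $A \subseteq \interior(p) \cap \interior(q)$, but with $H_n$ replaced by $H_n \cap A_p \cap A_q$, so that the resulting creatures automatically satisfy clauses (8) and (9) relative to both $p$ and $q$ (for clause (8), space the $s^r_n$'s out to avoid overlap in both the $s^p_m$- and $s^q_{m'}$-grids, which is possible because $A$ meets infinitely many of each and removing finitely many blocks from $A$ keeps it in $\F$). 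The delicate point is to re-establish the base case for this smaller family: for every $A' \in \F^+$ one needs finite $t \subseteq A'$ lying in $H_n \cap A_p \cap A_q$. This combines the compactness argument above with the key observation that every $\F_p^+$-set contains an $A_p$-element (immediate from the definition of $C_p$, since otherwise $\omega \setminus b$ would sit in $C_p$), the same for $q$, and the fact that $A_p, A_q, H_n$ are all upward closed under $\subseteq$, so three separate witnesses in $A'$ can be merged into a single $t$. Once the base case is secured, the induced-norm recursion of Lemma \ref{lem:main} produces the required $r$.
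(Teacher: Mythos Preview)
Your proposal is correct and uses the same embedding map $\iota(p)=(\F_p,\langle s^p,\interior(p)\rangle)$ and the same three-part skeleton (order preservation, density, incompatibility reflection) as the paper. The one genuine technical difference is in how the target filter $\F$ is encoded when you build the creatures of the new condition. The paper applies Lemma~\ref{lem:closed} once to replace $\F$ by a single closed generating set $C$, then defines a \emph{single} family $A = A_p \cap A_q \cap \{s : \forall c\in C\,[|s\cap c|>1]\}$ and equips \emph{every} creature of $r$ with the norm induced by this one $A$; the verification $\F\subseteq\F_r$ is then the one-line chain $A_r\subseteq A \Rightarrow C\subseteq C_r \Rightarrow \F\subseteq\F_r$. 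You instead keep the $F_\sigma$ decomposition $\F=\bigcup_n\T_n$ and use the level-dependent families $H_n$ (or $H_n\cap A_p\cap A_q$), giving the $n$th creature the norm induced by $H_n$; you then recover $\F\subseteq\F_r$ by the finite-correction trick with $F_n=s^r_0\cup\dots\cup s^r_{n-1}$. Both routes work and rest on the same compactness-plus-upward-closure core; the paper's buys a uniform norm at the cost of Lemma~\ref{lem:closed}, while yours avoids that lemma at the cost of tracking levels. (Two small remarks: your citation of Lemma~\ref{lem:closed} in the name-reduction step is superfluous, since countable closure of $\FF$ alone decides the Mathias--Prikry coordinate; and the paper actually proves density and incompatibility reflection in one stroke, constructing $r\leq p,q$ with $\phi(r)\leq(\F,\langle s^\ast,d\rangle)$ for any tuple satisfying its conditions (1)--(4).)
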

\begin{proof}
	Most of the tools needed to prove this have already been developed in
the proof of Lemma \ref{lem:main}. Fix $p \in {\P}_{0}$. Let ${A}_{p}$,
${C}_{p}$, and ${\F}_{p}$ be as in Definition \ref{def:filter}. As observed in
Section \ref{sec:blessthanaclosed}, $\interior(p) \in {\F}_{p}$. It follows that
${\F}_{p} \; {\forces}_{\FF} \; \interior(p) \in \mathring{\U}$, and so $\langle
{\F}_{p}, \langle {s}^{p}, \interior(p) \rangle\rangle$ is a condition in $\FF
\ast \Mi(\mathring{\U})$. Define a map $\phi: {\P}_{0} \rightarrow \FF \ast
\Mi(\mathring{\U})$ by $\phi(p) = \langle {\F}_{p}, \langle {s}^{p},
\interior(p) \rangle\rangle$. We will check that $\phi$ is a dense embedding.

	First suppose that $q \leq p$. We must show that $\phi(q) \leq \phi(p)$.
Note that ${s}^{q} \supset {s}^{p}$, $\interior(q) \subset \interior(p)$, and
that ${s}^{q} \setminus {s}^{p} \subset \interior(p)$. So it suffices to show
that ${\F}_{q} \supset {\F}_{p}$. For this, suppose that $s \in {A}_{q}$. Then
there is $n \in \omega$ such that ${\nor}^{q}_{n}(s \cap {s}^{q}_{n}) > 0$. As
$q \leq p$, there must be $m \in \omega$ such that ${\nor}^{p}_{m}(s \cap
{s}^{q}_{n} \cap {s}^{p}_{m}) > 0$. Therefore, ${\nor}^{p}_{m}(s \cap
{s}^{p}_{m}) > 0$, and so $s \in {A}_{p}$. So ${A}_{q} \subset {A}_{p}$, whence
${\F}_{q} \supset {\F}_{p}$. 

	Next, fix $p, q \in {\P}_{0}$ and suppose that $\phi(p)$ and $\phi(q)$
are compatible in $\FF \ast \Mi(\mathring{\U})$. We must show that $p$ and $q$
are compatible. Indeed, we will prove something stronger. Let $\langle \F,
\langle {s}^{\ast}, d \rangle \rangle$ be an arbitrary tuple where 
	\begin{enumerate}
		\item
			$\F$ is an ${F}_{\sigma}$ filter containing both
${\F}_{p}$ and ${\F}_{q}$ 
		\item
			${s}^{\ast} \in \fin$, ${s}^{\ast} \supset {s}^{p}$,
${s}^{\ast} \supset {s}^{q}$, ${s}^{\ast} \setminus {s}^{p} \subset
\interior(p)$, and ${s}^{\ast} \setminus {s}^{q} \subset \interior(q)$
		\item
			$d \in \F$ and $\forall i \in {s}^{\ast} \forall j \in d
\[i < j\]$
		\item 
			$d \subset \interior(p) \cap \interior(q)$. 
	\end{enumerate}
We will show that there is $r \in {\P}_{0}$ such that $r \leq p$, $r \leq q$,
and $\phi(r) \leq \langle \F, \langle {s}^{\ast}, d \rangle \rangle$. The
argument that $\phi''{\P}_{0}$ is dense in $\FF \ast \Mi(\mathring{\U})$ is
almost identical; so this is enough to finish the proof. Using Lemma
\ref{lem:closed}, find a non-empty closed set $C \subset \Pset(\omega)$ such
that $C \subset \F$ and $\forall b \in \F \exists c \in C \[c \;
{\subset}^{\ast} \; b\]$. Put 
	\begin{align*}
		A = \{s \in \fin: s \in {A}_{p} \cap {A}_{q} \wedge \forall c
\in C \[\lc s \cap c \rc > 1\]\}.
	\end{align*} 
We note a few properties of $A$. It is clear that for each $s \in A$, $\lc s \rc
> 1$ and that if $t \supset s$, then $t \in A$. Next, fix $b \in {\F}^{+}$. For
any $c \in C$, $b \cap c \in {\F}^{+}$. Therefore, there exist $s \in {A}_{p}$
and $\bar{s} \in {A}_{q}$ such that $s \subset b \cap c$ and $\bar{s} \subset b
\cap c$. By a compactness argument, this implies that there is a finite set $s
\subset b$ such that for each $c \in C$, there exists $t \subset s$ such that $t
\in {A}_{p}$ and $t \subset b \cap c$, and also there exists $\bar{t} \subset s$
such that $\bar{t} \in {A}_{q}$ and $\bar{t} \subset b \cap c$. Recall that for
any $t \in {A}_{p}$, $\lc t \rc > 1$. Therefore, for any $c \in C$, $\lc s \cap
c \rc > 1$. Moreover, since $C$ is non-empty, there are $t \subset s$ and
$\bar{t} \subset s$ with $t \in {A}_{p}$ and $\bar{t} \in {A}_{q}$. Therefore,
$s \in {A}_{p} \cap {A}_{q}$. Thus we have shown that for $b \in {\F}^{+}$,
there exists $s \subset b$ such that $s \in A$. Lastly, note that for any 
$c \in C$, there is no $s \in A$ such that $s \subset \left( \omega \setminus c
\right)$.

	Now, let $\nor: \fin \rightarrow \omega$ be the norm induced by $A$,
defined exactly as in the proof of Lemma \ref{lem:main}. It is easy to check
that $\nor$ is well-defined and that it is a norm on $\omega$. Just as in the
proof of Lemma \ref{lem:main}, it is not hard to show by induction on $n$ that
for any $b \in {\F}^{+}$ there exists $s \subset b$ such that $\nor(s) \geq n$.
Define $r$ as follows. ${s}^{r} = {s}^{\ast}$. Let ${n}_{p}$ be the least $n \in
\omega$ such that for all $m \geq n$, ${s}^{p}_{m} \cap {s}^{\ast} = 0$, and let
${n}_{q}$ be analogously defined for $q$. Clearly, $d \cap \left( {\bigcup}_{m
\in [{n}_{p}, \infty)}{{s}^{p}_{m}} \right) \cap \left( {\bigcup}_{m \in
[{n}_{q}, \infty)}{{s}^{q}_{m}}\right) \in \F$. So find ${s}^{r}_{0} \subset d
\cap \left( {\bigcup}_{m \in [{n}_{p}, \infty)}{{s}^{p}_{m}} \right) \cap \left(
{\bigcup}_{m \in [{n}_{q}, \infty)}{{s}^{q}_{m}}\right)$ with $\nor({s}^{r}_{0})
> 0$. Now, suppose that ${s}^{r}_{n}$ is given to us with ${s}^{r}_{n} \subset d
\cap \left( {\bigcup}_{m \in [{n}_{p}, \infty)}{{s}^{p}_{m}} \right) \cap \left(
{\bigcup}_{m \in [{n}_{q}, \infty)}{{s}^{q}_{m}}\right)$ and $\nor({s}^{r}_{n})
> 0$. Put ${n}^{\ast}_{p} = \max\{m \in \omega: {s}^{r}_{n} \cap {s}^{p}_{m}
\neq 0\}$ and ${n}^{\ast}_{q} = \max\{m \in \omega: {s}^{r}_{n} \cap {s}^{q}_{m}
\neq 0\}$. Note that ${n}_{p} \leq {n}^{\ast}_{p}$ and that ${n}_{q} \leq
{n}^{\ast}_{q}$. Again, it is clear that $d \cap \left( {\bigcup}_{m \in
\[{n}^{\ast}_{p} + 1, \infty\right)}{{s}^{p}_{m}} \right) \cap \left(
{\bigcup}_{m \in \[{n}^{\ast}_{q} + 1, \infty\right)}{{s}^{q}_{m}}\right) \in
\F$. So it is possible to find ${s}^{r}_{n + 1}$ with $\nor({s}^{r}_{n + 1}) >
\nor({s}^{r}_{n})$ such that ${s}^{r}_{n + 1} \subset d \cap \left( {\bigcup}_{m
\in \[{n}^{\ast}_{p} + 1, \infty\right)}{{s}^{p}_{m}} \right) \cap \left(
{\bigcup}_{m \in \[{n}^{\ast}_{q} + 1, \infty\right)}{{s}^{q}_{m}}\right)$. This
completes the construction of the ${s}^{r}_{n}$. For each $n \in \omega$, put
${c}^{r}_{n} = \langle {s}^{r}_{n}, \nor \restrict {s}^{r}_{n} \rangle$ and define $r = \langle {s}^{r}, \langle {c}^{r}_{n}: n \in \omega \rangle \rangle$. Observe that for any $s \in \fin$, if $\nor(s) > 0$, then $s \in A$, and so $s \in  {A}_{p} \cap {A}_{q}$, and hence there exist $m, n \in \omega$ such that ${\nor}^{p}_{m}(s
\cap {s}^{p}_{m}) > 0$ and ${\nor}^{q}_{n}(s \cap {s}^{q}_{n}) > 0$. It follows
that $r \leq p$ and $r \leq q$. It remains to be seen that $\phi(r) = \langle
{\F}_{r}, \langle {s}^{r}, \interior(r) \rangle \rangle \leq  \langle \F,
\langle {s}^{\ast}, d \rangle \rangle$. First suppose that $s \in {A}_{r}$. Then
by definition, for some $n \in \omega$, ${\nor}^{r}_{n}(s \cap {s}^{r}_{n}) >
0$. Hence $s \in A$, and so ${A}_{r} \subset A$. So for any $c \in C$, $\neg
\exists s \in {A}_{r}$ such that $s \subset \omega \setminus c$. So $c \in
{C}_{r}$. Thus $C \subset {C}_{r}$. It follows that $\F \subset {\F}_{r}$. Since
${s}^{r} = {s}^{\ast}$ and $\interior(r) \subset d$, it follows that $\phi(r)
\leq \langle \F, \langle {s}^{\ast}, d \rangle \rangle$.        
\end{proof}
We make some remarks on how to get an analogous characterization for ${\P}_{1}$.
Let $\A$ be as in Section \ref{sec:blessthanaclosed}.
Let ${\V}_{{\omega}_{1}}$ be the extension gotten by adding ${\omega}_{1}$ Cohen
reals.
Then in ${\V}_{{\omega}_{1}}$ it is possible to prove that ${\P}_{1}$ (defined
relative to $\A$) densely embeds into ${\FF}_{\A}\ast\MM(\mathring{\U})$, where
${\FF}_{\A} = \{\F: \F \ \text{is a proper} \ {F}_{\sigma} \ \text{filter on} \
\omega \ \text{and} \ \I(\A) \cap \F = 0\}$, ordered by $\supset$, and where
$\mathring{\U}$ is the canonical ${\FF}_{\A}$ name for the ultrafilter added by
it.
The proof of this is nearly identical to the proof of Theorem
\ref{thm:p0isfstarmu}, expect that in the construction of $r$, the Cohen reals
must be used like in the proof of Lemma \ref{lem:main}.
Now, it is easy to see that both in the case of ${\P}_{0}$ and in the case of
${\P}_{1}$, for the corresponding $\MM(\mathring{\U})$ to have the right
properties, it is not necessary for $\mathring{\U}$ to be fully generic for
$\FF$ or ${\FF}_{\A}$ respectively.
It is enough to have ultrafilters that are sufficiently generic for $\FF$ and
${\FF}_{\A}$.
We elaborate on this idea in the next section to give a c.c.c.\ proof of the
consistency of $\b < \ac$. 

\section{A ccc proof} \label{sec:cccproof}

In this section, we provide a ccc proof of the consistency of $\bb <
\aa_\closed$. Unlike the proof in
Section~\ref{sec:blessthanaclosed}, this proof generalizes to the situation
where $\cc$ is larger than
$\omega_2$.

Let $\kappa$ be a regular uncountable cardinal, assume $\cc = \kappa$, $\la
f_\alpha : \alpha < \kappa \ra$
is a well-ordered unbounded family in $\omom$, 
and $\la X_\alpha : \alpha < \lambda \ra$ is a sequence of non-empty closed
subsets of $\omoms$ such that $\A = \bigcup_{\alpha < \lambda} X_\alpha$ is a
MAD family. Here
$\omega_1 \leq \lambda \leq \kappa$. Let $\VVV_\kappa$ be the extension of
$\VVV$ by adding $\kappa$
Cohen reals. Assume that (the reinterpretation of) $\A$ is still MAD in
$\VVV_\kappa$.

\begin{Theorem}
There is an ultrafilter $\U$ extending $\F (\A)$ such that $\MM (\U)$ preserves
the unboundedness of
$\la f_\alpha : \alpha < \kappa \ra$ and forces that (the reinterpretation of)
$\A$ is not MAD anymore.
\end{Theorem}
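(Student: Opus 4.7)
The plan is to construct $\U$ in $\VVV_\kappa$ by a transfinite recursion of length $\kappa$, ensuring $\U$ is an ultrafilter extending $\F(\A)$ for which $\MM(\U)$ enjoys enough of the combinatorics from Section \ref{sec:blessthanaclosed} to preserve unboundedness of $\la f_\alpha : \alpha<\kappa\ra$. Destruction of $\A$ will be immediate from $\U \supset \F(\A)$: the Mathias-Prikry generic real $\dot r$ is a pseudo-intersection of $\U$, hence almost disjoint from every $a \in \A$, so $\A \cup \{\dot r\}$ remains almost disjoint, witnessing that $\A$ is no longer maximal.

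For the main clause, using $\cc = \kappa$, I will enumerate a sequence $\la (\F_\xi, p_\xi, \dot g_\xi, X_\xi) : \xi < \kappa\ra$ running through all quadruples with $\F_\xi \in \FF_\A$, $p_\xi \in \MM(\F_\xi)$, $\dot g_\xi$ an $\MM(\F_\xi)$-name for an element of $\BS$, and $X_\xi \subset \omega$. The construction will produce an increasing chain $\la \G_\xi : \xi<\kappa\ra$ of filters in $\FF_\A$. At stage $\xi+1$, I first decide $X_\xi$: a standard argument shows that at least one of $\G_\xi \cup \{X_\xi\}$ or $\G_\xi \cup \{\omega \setminus X_\xi\}$ extends to an $F_\sigma$ filter in $\FF_\A$, and I add the appropriate set. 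Then, if $\F_\xi \subset \G_\xi$, I extend further to handle $(p_\xi, \dot g_\xi)$.

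The heart of the construction is this last extension step, a single-stage ccc analog of Lemmas \ref{lem:niceform} and \ref{lem:almost}. Given $\F \in \FF_\A$, a condition $p = (s, A) \in \MM(\F)$, and an $\MM(\F)$-name $\dot g$ for an element of $\BS$, the goal will be to produce an $F_\sigma$ filter $\F' \supset \F$ in $\FF_\A$, a set $A' \in \F'$ with $A' \subset A$, and some $\alpha < \kappa$ such that $(s, A') \Vdash_{\MM(\F')} f_\alpha \not\leq^* \dot g$. The proof will mirror Lemmas \ref{lem:process3}--\ref{lem:niceform}: define the norm $\nor$ induced by the family of ``decision-witnessing'' finite subsets of $A$ for $\dot g$, then apply a compactness argument against the $\Pset(\omega)$-closures $Y_{\beta_0}\times\dotsb\times Y_{\beta_k}$ of finitely many $X_\beta$, combined with a Cohen-generic selection from $\VVV_\kappa$ (the $\kappa$ Cohen reals are essential here, as in Lemma \ref{lem:main}), to build a sequence $s_0 < s_1 < \dotsb$ of finite subsets of $A$ with rapidly increasing norm, each avoiding prescribed $\I(\A)$-elements and supplying witnesses against $\dot g$. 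Setting $A' = \bigcup_n s_n$ and $\F'$ the $F_\sigma$ filter generated by $\F$ together with $A'$, the bounding argument of Lemma \ref{lem:almost} will yield the required $\alpha$.

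The main obstacle is the limit-stage bookkeeping: at limits $\xi$ of uncountable cofinality, the union $\bigcup_{\eta<\xi}\G_\eta$ need not be $F_\sigma$. I will circumvent this by not requiring $\G_\xi$ itself to be $F_\sigma$ at such limits --- the successor extension step only needs the locally relevant filter $\F_\xi$ and the witness $\F'$ produced at each successor to be $F_\sigma$, and both are $F_\sigma$ by construction. By a closing-off argument using the ccc of $\MM(\U)$ and $\cc = \kappa$, every $\MM(\U)$-name for a function in $\BS$ below a given condition is captured as some $\MM(\F_\xi)$-name at a sufficiently late stage $\xi$, and thus handled by the successor-stage work. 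Setting $\U = \bigcup_{\xi<\kappa}\G_\xi$ then yields the desired ultrafilter.
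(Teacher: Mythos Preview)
Your overall architecture---building $\U$ as an increasing union of filters compatible with $\F(\A)$, deciding each $X_\xi$, and handling names stage by stage---matches the paper's. However, the paper's proof hinges on a case distinction that your proposal suppresses, and your successor step does not go through uniformly.

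The paper splits into two cases according to whether there exists an $F_{<\kappa}$ filter $\F$ with $\F(\A)\subset\F$. These correspond exactly to the dichotomy in Section~\ref{sec:blessthanaclosed}: Case~1 is the situation where $\P_0$ already destroys $\A$, Case~2 is where $\Vdash_0``\A$ is MAD'' and one must use $\P_1$. Your successor step explicitly invokes Lemmas~\ref{lem:process3}--\ref{lem:niceform}, but Lemma~\ref{lem:niceform} (and Lemma~\ref{lem:extensions}, on which it rests) carries the standing hypothesis $\Vdash_0``\A$ is MAD''. Translated to the filter picture: that argument requires that for the current filter one can find infinitely many distinct $a_n\in\A$ which are positive in a strong, norm-witnessing sense. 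In Case~1 this fails outright---once the chain has absorbed an $F_{<\kappa}$ filter containing $\F(\A)$, every $a\in\A$ is null, so there are no $a_n$ to feed into the norm construction, and the $\P_1$-style machinery collapses.

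For Case~1 the paper instead uses the \emph{preterm} technique from~\cite{Br98}: rather than producing a single set $A'$ and passing to the filter generated by $\F\cup\{A'\}$, one enlarges the filter by an entire $F_\sigma$ family, namely all complements $\omega\setminus b$ for which some stem-indexed function $g_{s,b}$ eventually dominates the chosen $f_\beta$. This is what guarantees the key robustness clause---that for \emph{every} filter $\HHH$ extending $\F_{\alpha+1}$, $\MM(\HHH)$ forces $f_\beta\not\leq^*\dot g$. Your stated goal, $(s,A')\Vdash_{\MM(\F')}f_\alpha\not\leq^*\dot g$, is too weak as written: since $\MM(\U)$ properly extends $\MM(\F')$ as a poset, forcing in $\MM(\F')$ does not transfer upward, and you never argue the persistence to arbitrary $\HHH\supset\F'$. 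Case~2 does follow~\cite{Br98} essentially verbatim (with Lemma~\ref{kappa2} interleaved), so your sketch is closer to the mark there, but the paper still works with preterms rather than adding a single generating set at each step.
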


\begin{proof}
The proof of the theorem follows closely the proof of the analogous result for
$\aa$ instead of $\aa_\closed$,
\cite[Theorem 3.1]{Br98}. 
However, some of the combinatorics developed for $\aa_\closed$ in
Section~\ref{sec:blessthanaclosed}
will be needed as well.

Say $\F$ is an $F_{<\kappa}$ filter if it is the union of $<\kappa$ many closed
subsets of $\omoms$.
It is easy to see that the appropriate generalizations of Lemmas~\ref{lem:p}
and~\ref{lem:ad}
hold. 

\begin{Lemma}   \label{kappa1}
In $\VVV_\kappa$, let $\F$ be any $F_{<\kappa}$ filter and suppose that $\GG$,
the filter generated by 
$\F \cup \F(\A)$, is a proper filter. Then $\GG$ is $P^+$.
\end{Lemma}

\begin{Lemma}  \label{kappa2}
In $\VVV_\kappa$, suppose that $\F$ is a $F_{<\kappa}$ filter such that $\GG$,
the filter generated by
$\F \cup \F(\A)$, is proper. Suppose $b \in \GG^+$. Then for each $\alpha_0, ...
, \alpha_k < \omega_1$,
there is a $c \in [b]^\omega$ such that $c \in \GG^+$ and $\forall (a_0, ... ,
a_k) \in X_{\alpha_0} \times ...
\times X_{\alpha_k} [|(a_0 \cup ... \cup a_k) \cap c | < \omega]$.
\end{Lemma}

We distinguish two cases. They correspond to the cases where we force with the
partial orders
$\P_0$ and $\P_1$, respectively, in Section~\ref{sec:blessthanaclosed}, and also
to the two cases
of the proof of~\cite[Theorem 3.1]{Br98}.

{\em Case 1.} In $\VVV_\kappa$, there is a $F_{<\kappa}$ filter $\F$ such that
$\F (\A) \sub \F$. This corresponds
to the situation where we force with $\P_0$ in
Section~\ref{sec:blessthanaclosed}. Since this case
is different from the corresponding case in~\cite{Br98}, we provide details.
Recall~\cite[p. 192]{Br98} that a partial map $\tau : \omloms \times \omega \to
\omega$ is a {\em preterm}.
If $\GG \supseteq \F$ is a filter and $\mathring g$ is an $\MM (\GG)$-name for a
function in $\omom$, then
$\tau = \tau_{\mathring g}$ given by $\tau (s,n) = k$ iff $(s,G)$ forces
``$\mathring g (n) = k$" for some $G \in \GG$
is a preterm, the {\em preterm associated with $\mathring g$}. Let $\{
\tau_\alpha : \alpha < \kappa \}$
enumerate the set of all preterms. Let $\F_0 = \F$. Recursively build an
increasing chain of $F_{<\kappa}$ filters 
$\F_\alpha$, $\alpha < \kappa$, such that
\begin{itemize}
\item for all $\alpha < \lambda$ there is $b \in \F_{\alpha +1}$ such that $| b
\cap a| < \omega$ for all $a \in X_\alpha$
\item if $\tau_\alpha$ looks like a name for $\F_\alpha$, then there is $\beta <
\kappa$ such that for all
  filters $\HHH$ extending $\F_{\alpha + 1}$, $\MM (\HHH)$ forces that $f_\beta
\not\leq^* \mathring g$ where
  $\tau_{\mathring g} = \tau_\alpha$
\item if $\tau_\alpha$ does not look like a name for $\F_\alpha$, then
$\tau_\alpha$ is not a preterm
  associated with any $\MM (\HHH)$-name $\mathring g$, for any filter $\HHH$
extending $\F_{\alpha + 1}$.
\end{itemize}
Here we say that $\tau_\alpha$ {\em looks like a name for} $\F_\alpha$ if for
all $n$, all
$s \in \omloms$, and all $c \in \F_\alpha^+$, there are $t \in \omloms$ and $u
\sub s \cup t$
such that $(u,n) \in \dom (\tau_\alpha)$ and $t \sub c$. 

For limit ordinals $\alpha$ we simply let $\F_\alpha = \bigcup_{\beta < \alpha}
\F_\beta$. So assume
$\alpha + 1$ is a successor ordinal. Suppose $\tau_\alpha$ does not look like a
name for $\F_\alpha$.
Then there are $n$, $s \in \omloms$, and $c \in \F_\alpha^+$ witnessing this.
That is, whenever
$t \sub c$ is finite, then for no $u \sub s \cup t$ does $(u,n)$ belong to
$\dom(\tau_\alpha)$. 
Let $\F_\alpha' $ be the filter generated by $\F_\alpha$ and $c$. Then
$\tau_\alpha$ is not a preterm
associated with any $\MM (\HHH)$-name $\mathring g$, for any filter $\HHH$
extending $\F_\alpha '$,
because no condition compatible with $(s,c)$ would decide $\mathring g (n)$.

So suppose $\tau_\alpha$ looks like a name for $\F_\alpha$. Assume $\F_\alpha =
\bigcup_{\gamma
< \mu} K_\gamma$ where $\mu < \kappa$ and all $K_\gamma$ are compact. Fix
$\gamma$
and fix $T = \{ s_j : j < \ell \} \sub \omloms$. Now define $f = f_{\gamma, T}$
by
$$\begin{array}{ll} f(n) = & \min \{ k : \mbox{ given } c \in K_\gamma \mbox{
and } b_j \mbox{ with }
  c \sub \bigcup_{j < \ell} b_j \\ & \mbox{there are } j <\ell , t \subset b_j,
\mbox{ and } u \sub s_j \cup t 
  \mbox{ with } \tau_\alpha (u,n) \leq k \} \\
\end{array}$$
Let us first check that $f$ is well-defined. Fix $c \in K_\gamma$ and $b_j$ with
$c \sub \bigcup_{j < \ell}
b_j$. Let $j$ be minimal such that $b_j \in \F_\alpha^+$. Since $\tau_\alpha$
looks like a name for
$\F_\alpha$, there are finite $t \sub b_j$ and $u \sub s_j \cup t$ such that
$(u,n) \in \dom (\tau_\alpha)$.
Choose such $t$ and $u$ so that the value $k ( \{ c, b_j : j < \ell \}) :=
\tau_\alpha (u,n)$ is minimal.
Since $(\twoom)^\ell$ and $K_\gamma$ are compact, it is easy to see that the
function sending
$\la c, b_j : j < \ell\ra$ to  $k ( \{ c, b_j : j < \ell \}) $ is bounded. Hence
$f$ is well-defined.

Now choose $\beta$ such that $f_\beta \not\leq^* f_{\gamma ,T}$ for all $\gamma
< \mu$ and
finite $T \sub \omloms$. For $s \in \omloms$ and $b \in\omoms$ define $g =
g_{s,b}$ by
$$ g(n) =  \min \{ k : \exists \mbox{ finite } t \sub b \; \exists u \sub s \cup
t \; ( \tau_\alpha (u,n) = k ) \} $$
in case the set on the right-hand side is non-empty; otherwise put $g(n)
=\omega$.
Let $\F_\alpha '$ be the filter generated by $\F_\alpha$ and all sets of the
form
$\{\omega \sem b : \exists s \; (g_{s,b} \geq^* f_\beta ) \}$. It is clear that
these
sets are a union of countably many compact sets. 

We first verify that $\F_\alpha '$ still is a proper filter. Suppose this were
not the case.
Then, for $c \in \F_\alpha$ and sets $b_j$, $j < \ell$, we would have $\omega
\sem b_j \in \F_\alpha '$
and $c \cap \bigcap_{j < \ell} \omega \sem b_j = \emptyset$, i.e., $c \sub
\bigcup_{j<\ell} b_j$.
Fix $\gamma$ such that $c \in K_\gamma$ and $s_j$ such that $g_{s_j, b_j}
\geq^*f_\beta$.
Set $T = \{ s_j : j < \ell \}$. Fix $m$ such that $g_{s_j,b_j} (n) \geq
f_\beta(n) $ for all $n \geq m$.
By construction there is $n \geq m$ such that $f_{\gamma, T} (n) < f_\beta (n)$.
By definition
of $f_{\gamma, T}$, there are $j < \ell$, $t \sub b_j$, and $u \sub s_j \cup t$
with 
$\tau_\alpha (u,n) \leq f_{\gamma,T} (n)$. But then $g_{s_j,b_j} (n) \leq
f_{\gamma ,T}
(n) < f_\beta (n) \leq g_{s_j,b_j} (n)$, a contradiction.

Next we check that $\F_\alpha '$ is as required. Let $\HHH$ be any filter
extending $\F_\alpha'$,
and let $(s,b) \in \MM (\HHH)$. Suppose $\mathring g$ is $\MM (\HHH)$-name such
that
$\tau_\alpha = \tau_{\mathring g}$. Assume there is $m$ such that $(s,b)$ forces
$\mathring g(n) \geq f_\beta (n)$ for all $n \geq m$. Then clearly $g_{s,b} (n)
\geq f_\beta (n)$ for
all $n \geq m$. So $\omega \sem b \in \F_\alpha ' \sub \HHH$, a contradiction.

Finally, by Lemma~\ref{kappa2}, we may find $b \in (\F_\alpha')^+$ such that
$|b \cap a | < \omega$ for all $a \in X_\alpha$. Let $\F_{\alpha+1}$ be the
filter generated by 
$\F_\alpha'$ and $b$. This completes the recursive construction and Case 1 of
the proof.

{\em Case 2.} In $\VVV_\kappa$, there is no $F_{<\kappa}$ filter $\F$ such that
$\F (\A) \sub \F$.
This corresponds to the situation when we force with $\P_1$ in Section 1.
This is the more difficult case. However, unlike for Case 1, the proof
of~\cite{Br98} can be taken
over almost verbatim in this case. Simply mix applications of Lemma~\ref{kappa2}
with the recursive construction expounded in \cite[pp. 192-195]{Br98}.

This completes the proof of the theorem.
\end{proof}

Using finite support iteration we now obtain
\begin{Theorem}
Let $\kappa$ be a regular uncountable cardinal. It is consistent that $\bb \leq
\kappa$
and $\aa_\closed = \cc = \kappa^+$.
\end{Theorem}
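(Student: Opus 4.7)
The plan is to build the model via a finite support iteration $\langle \P_{\alpha}, \mathring{\Q}_{\alpha} : \alpha \leq \kappa^+\rangle$ of length $\kappa^+$ over a ground model $\V \models \GCH$. Since each iterand will be ccc of size at most $\kappa$, intermediate extensions $\V[G_\alpha]$ for $\alpha < \kappa^+$ satisfy $\cc = \kappa$, which is precisely the setup required to invoke the previous theorem at each stage. The first iterand $\Q_0$ is Cohen forcing adding $\kappa$ Cohen reals; these supply a well-ordered unbounded family $\langle f_\beta : \beta < \kappa\rangle$ of strictly increasing functions in $\omom$ whose unboundedness in $\V[G_{\kappa^+}]$ will witness $\bb \leq \kappa$.

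Next I would fix a book-keeping enumerating, in order-type $\kappa^+$ with cofinal repetitions, all nice $\P_{\kappa^+}$-names for $\kappa$-sequences of non-empty closed subsets of $\omoms$, so that every such sequence occurring in $\V[G_{\kappa^+}]$ is considered at some stage $\alpha$. At stage $\alpha$, interpret the booked name in $\V[G_\alpha]$ to obtain $\langle X^\alpha_\xi : \xi < \kappa\rangle$ and the candidate $\A^\alpha = \bigcup_{\xi < \kappa} X^\alpha_\xi$. If $\A^\alpha$ fails to be MAD in $\V[G_\alpha]$, let $\mathring{\Q}_\alpha$ name the trivial forcing. Otherwise, apply the previous theorem inside $\V[G_\alpha]$, with $\A^\alpha$ and (the reinterpretation of) $\langle f_\beta : \beta < \kappa\rangle$ as input, to obtain an ultrafilter $\U_\alpha \supseteq \F(\A^\alpha)$ such that $\MM(\U_\alpha)$ preserves the unboundedness of $\langle f_\beta\rangle$ and forces $\A^\alpha$ to be no longer MAD; let $\mathring{\Q}_\alpha$ name $\MM(\U_\alpha)$.

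To verify the desired inequalities: each $\MM(\U_\alpha)$ preserves the unboundedness of $\langle f_\beta : \beta < \kappa\rangle$ by construction, and the standard preservation result that a finite support iteration of ccc forcings preserving the unboundedness of a fixed family also preserves it in the limit gives $\bb \leq \kappa$ in $\V[G_{\kappa^+}]$. The equality $\cc = \kappa^+$ is the usual cardinal arithmetic computation for an FSI of length $\kappa^+$ over $\GCH$, so $\aa_\closed \leq \cc = \kappa^+$. For the reverse $\aa_\closed \geq \kappa^+$, suppose $\mathcal B = \bigcup_{\xi < \kappa} Y_\xi$ is a MAD family of closed sets in $\V[G_{\kappa^+}]$; a ccc reflection argument places the sequence $\langle Y_\xi\rangle$ inside some $\V[G_\alpha]$ with $\alpha < \kappa^+$, and by book-keeping $\mathcal B$ is considered (and, if still MAD at that moment, destroyed) at some stage $\beta \geq \alpha$, contradicting its MAD-ness in the final model via a $\Pi^1_1$ absoluteness argument of the kind used in the proof of Theorem \ref{thm:main1}.

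The main technical obstacle is reconciling the hypothesis of the previous theorem with the intermediate models of the iteration: that theorem formally asks that the codes for $\A$ live in a model $\V$ with $\cc = \kappa$ and that $\A$ remain MAD after an explicit $\kappa$-Cohen extension, whereas at stage $\alpha$ of our iteration no such extension is manifestly visible. The fix is to observe that $\V[G_\alpha]$ factors as $\V[G_\alpha'] \ast (\text{Cohen-like remainder})$, where $\V[G_\alpha']$ has size $<\kappa$ and contains the codes of $\A^\alpha$, and where the remainder supplies the genericity needed for the $F_{<\kappa}$-filter constructions (Lemmas \ref{kappa1} and \ref{kappa2}) in the proof of the previous theorem; this rests on the cofinal presence of Cohen subforcings inside FSI of ccc posets. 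A secondary issue is arranging the book-keeping so that every potential $\kappa$-union of closed sets in the final extension is genuinely captured by some $\alpha < \kappa^+$, which is routine given $\kappa^{<\kappa} = \kappa$ and the ccc-ness of the iteration.
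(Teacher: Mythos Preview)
Your overall framework---a finite support iteration of length $\kappa^+$ over a model of $\GCH$, with book-keeping to kill every potential witness to $\aa_\closed \leq \kappa$ while preserving a fixed well-ordered unbounded family of length $\kappa$---is exactly what the paper intends; the paper in fact gives no details beyond the phrase ``using finite support iteration''.

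However, your resolution of the technical obstacle does not work as written. The factorization $\V[G_\alpha] = \V[G_\alpha'] \ast (\text{Cohen-like remainder})$ with $\V[G_\alpha']$ of size ${<}\kappa$ containing the codes of $\A^\alpha$ is impossible: the sequence $\langle X^\alpha_\xi : \xi < \kappa\rangle$ already has length $\kappa$, so no model of size ${<}\kappa$ can contain it. Even under a more charitable reading (each finite batch of parameters lives in a small intermediate model over which some Cohen real is generic), the construction of $\U$ in the previous theorem is a recursion of length $\kappa$ in which, at each step, a Cohen real generic over a model containing \emph{everything built so far} is required (see the proofs of Lemmas~\ref{lem:p}, \ref{lem:ad}, \ref{lem:main}). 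The Cohen reals latent in an FSI at an arbitrary stage $\alpha$---for instance a successor stage, or a limit of small cofinality---do not furnish such a cofinal-in-$\kappa$ supply.

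The fix is both simpler and exactly parallel to the proof of Theorem~\ref{thm:main1}: let each iterand be $\CC_\kappa \ast \mathring{\R}_\alpha$. First add $\kappa$ Cohen reals; if $\A^\alpha$ is no longer MAD, let $\mathring{\R}_\alpha$ be trivial; otherwise the hypotheses of the previous theorem are satisfied with $\V[G_\alpha]$ in the role of $\V$ and the Cohen extension in the role of $\V_\kappa$, so let $\mathring{\R}_\alpha$ name $\MM(\U_\alpha)$ for the ultrafilter built there. Since $\CC_\kappa$ does not add dominating reals it preserves the unboundedness of $\langle f_\beta : \beta < \kappa\rangle$, and the iterand $\CC_\kappa \ast \MM(\mathring{\U}_\alpha)$ still has size $\kappa$, so the rest of your argument (cardinal arithmetic, book-keeping, $\Pi^1_1$ absoluteness) goes through unchanged.
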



\section{Tail splitting, club splitting and closed almost disjointness}
\label{sec:club}

\begin{Def} \label{def:clubandtail}
Let $\kappa$ be a regular cardinal, and let $\bar A = \la a_{\alpha} : \alpha <
\kappa \ra \sub \omoms$.
$\bar A$ is {\em tail-splitting} if for every $b \in \omoms$ there is $\alpha <
\kappa$ such that
$a_\beta $ splits $b$ for all $\beta \geq \alpha$. $\bar A$ is {\em
club-splitting} if for every $b \in \omoms$,
$C_b = \{ \alpha < \kappa : a_\alpha$ splits $b\}$ contains a club. 
\end{Def}
Clearly, a tail-splitting sequence
is club-splitting, and the existence of a club-splitting sequence of length
$\kappa$ implies that
$\ss_\omega \leq \kappa$. Moreover, it is easy to see that $\kappa \leq \rr$,
where $\rr$ is the reaping number. In the next section we shall come back to the
question which
of these implications reverse.

\begin{Def}
$\bar A = \la a_{\alpha,n} : \alpha < \kappa, n < \omega \ra$ is a {\em
tail-splitting
sequence of partitions} if the $a_{\alpha,n}$, $n \in \omega$, are pairwise
disjoint and for
all $b \in \omoms$ there is $\alpha$ such that $a_{\beta,n}$ splits $b$ for all
$\beta \geq \alpha$
and all $n \in \omega$. Similarly, $\bar A$ is a {\em club-splitting sequence of
partitions} if 
for all $b \in \omoms$, $C_b = \{ \alpha < \kappa : $ all $a_{\alpha ,n}$ split
$b\}$ contains a
club. 
\end{Def}
Clearly a tail-splitting sequence of partitions yields a tail-splitting
sequence, but we don't know
whether the converse is true (see Question~\ref{splitting-partitions}).
Similarly for club-splitting.

We begin with two observations:

\begin{Obs}
In the Hechler model (the model obtained by adding at least $\omega_2$ Hechler
reals
over a model of $CH$), there is a tail-splitting sequence of partitions of
length $\omega_1$.
\end{Obs}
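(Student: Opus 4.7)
My plan is to build the required tail-splitting sequence of partitions in the $\CH$ ground model $V$ underlying the iteration, and then to show that it survives the Hechler iteration because each building block of the iteration is $\sigma$-centered.

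First, in $V$, I enumerate $\omoms \cap V = \{b_\alpha : \alpha < \omega_1\}$ and recursively construct partitions $A_\alpha = \{a_{\alpha,n} : n < \omega\}$ of $\omega$ into infinite pieces so that every $a_{\alpha,n}$ splits every $b_\beta$ with $\beta \leq \alpha$. At stage $\alpha$ this is a standard diagonal weaving: enumerate the countably many constraints ``$a_{\alpha,n}$ meets both $b_\beta$ and $\omega \sem b_\beta$ infinitely often'' for $\beta \leq \alpha$ and $n < \omega$, and distribute elements accordingly. By construction $\la A_\alpha : \alpha < \omega_1 \ra$ is tail-splitting in $V$: for $b = b_\gamma$, every $A_\beta$ with $\beta \geq \gamma$ splits $b$ through each of its parts.

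The main task is to show that this sequence remains tail-splitting in the Hechler extension $V[G]$. By ccc, every real in $V[G]$ lies in some $V[G_\gamma]$ with $\gamma < \omega_2$; the successor steps are Hechler (hence $\sigma$-centered), and the countable-cofinality limits add a Cohen-like real (also $\sigma$-centered), so it suffices to prove a single-step preservation lemma: any $\sigma$-centered forcing preserves ground-model tail-splitting partition sequences. Given a $\P$-name $\dot b$ forced to be infinite and co-infinite, with $\P = \bigcup_n \P_n$ centered, I define in $V$ the sets
\[
b^+_n = \{ k : \exists p \in \P_n,\ p \forces k \in \dot b \}, \qquad b^-_n = \{ k : \exists p \in \P_n,\ p \forces k \notin \dot b \},
\]
which are disjoint by centeredness. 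Tail-splitting in $V$ yields $\alpha^* < \omega_1$ such that for all $\beta \geq \alpha^*$ and all $m$, $a_{\beta,m}$ splits every $b^\pm_n$. If $a_{\beta,m} \cap \dot b^G$ were finite, some $p \in G \cap \P_n$ would force $a_{\beta,m} \cap \dot b \sub N$, placing $a_{\beta,m} \sem N$ inside $b^-_n$ and contradicting that $a_{\beta,m}$ splits $b^-_n$; the symmetric argument handles $a_{\beta,m} \sem \dot b^G$ being finite.

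The main obstacle I anticipate is handling the $n$ for which $b^+_n$ or $b^-_n$ is finite or cofinite, since ``$a_{\beta,m}$ splits $b^\pm_n$'' then fails vacuously and the above contradiction collapses. For Hechler forcing I would address this by refining the decomposition: stratify $\P$ by stems $s \in \omlom$, noting that the conditions $\{ (s,f) : f \in \omom \}$ with a common stem are $\sigma$-directed under pointwise max, not merely finitely centered. The stem-indexed sets $b^\pm_s$ can then be shown to be non-degenerate whenever $G$ meets the stem $s$, since otherwise any $(s,f) \in G$ could be extended within stem $s$ to a condition forcing $\dot b$ to be finite or cofinite, contrary to assumption. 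Once this upgraded preservation lemma is in hand, the induction through the finite support iteration and the limit-stage analysis are routine.
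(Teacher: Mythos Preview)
Your overall plan---build a tail-splitting sequence of partitions under $\CH$ in the ground model and argue it survives the Hechler iteration---is exactly what the paper intends; its proof is simply a reference to Baumgartner--Dordal for the preservation step.

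However, the preservation argument you propose contains a genuine error. The intermediate lemma ``every $\sigma$-centered forcing preserves ground-model tail-splitting partition sequences'' is false, not merely technically awkward. Mathias--Prikry forcing $\MM(\U)$ with an ultrafilter $\U$ is $\sigma$-centered (conditions with the same finite stem are compatible), yet its generic real is a pseudo-intersection of $\U$ and is therefore not split by any ground-model set; hence $\MM(\U)$ destroys every ground-model splitting family. The obstacle you flag---that the sets $b^\pm_n$ may be finite or cofinite---is thus not a removable wrinkle in the proof but a symptom of the lemma's failure.

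Falling back to Hechler-specific structure is the correct instinct, and is what Baumgartner--Dordal actually do, but the fix you sketch still has gaps. Conditions in Hechler forcing sharing a fixed stem are finitely directed under pointwise $\max$, but they are \emph{not} $\sigma$-directed: the pointwise supremum of countably many $f_n \in \BS$ need not lie in $\BS$. Consequently your argument that $b^\pm_s$ must be non-degenerate (``otherwise a single condition with stem $s$ would force $\dot b$ finite or cofinite'') does not go through; cofinitely many $k$ may each be forced out of $\dot b$ by different $(s,f_k)$ with no common refinement. The actual Baumgartner--Dordal argument proceeds via a rank analysis on stems rather than directedness, and carrying the preservation through the finite-support iteration also requires its own lemma; neither step is as routine as your last sentence suggests.
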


To see this notice that the classical proof, of the consistency of $\ss < \bb$,
due to Baumgartner and Dordal~\cite{BD85},
shows that tail-splitting sequences of partitions from the ground model are
preserved
in the iterated Hechler extension.

\begin{Obs}
$\dd = \aleph_1$ implies the existence of a tail-splitting sequence of
partitions
of length $\omega_1$.
\end{Obs}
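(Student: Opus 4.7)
The plan is to mimic the classical argument that $\ss_\omega \leq \dd$, except that at each stage we produce a whole partition of $\omega$ into splitters instead of a single splitter. Since $\bb \leq \dd$, the hypothesis $\dd = \aleph_1$ gives $\bb = \dd = \aleph_1$, so one may fix a $\leq^\ast$-increasing dominating family $\la f_\alpha : \alpha < \omega_1 \ra$ of strictly increasing functions. Fix once and for all a partition $\omega = \bigsqcup_{n<\omega} T_n$ of $\omega$ into infinite pieces.

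For each $\alpha < \omega_1$, stretch $f_\alpha$ into an interval partition of $\omega$ by iteration: let $h_\alpha(0) = 0$ and $h_\alpha(k+1) = f_\alpha(h_\alpha(k)) + 1$, and put $J^\alpha_k = [h_\alpha(k), h_\alpha(k+1))$. Then set $a_{\alpha,n} = \bigcup_{k \in T_n} J^\alpha_k$. Since the $T_n$ partition $\omega$ and the $J^\alpha_k$ partition $\omega$ into intervals, the sequence $\la a_{\alpha,n} : n < \omega \ra$ is automatically pairwise disjoint.

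For the tail-splitting verification, associate to $b \in \omoms$ its gap function $g_b(k) = \min\{m > k : m \in b\}$, which lies in $\omom$. Since $\la f_\alpha \ra$ is dominating and $\leq^\ast$-increasing, there is $\alpha_b < \omega_1$ such that $g_b \leq^\ast f_\alpha$ for every $\alpha \geq \alpha_b$. Fix such $\alpha$. For all sufficiently large $k$, $g_b(h_\alpha(k)) \leq f_\alpha(h_\alpha(k)) = h_\alpha(k+1) - 1$, so the first $b$-element strictly above $h_\alpha(k)$ lies inside $J^\alpha_k$. Hence $b \cap J^\alpha_k \neq 0$ for cofinitely many $k$, and since every $T_n$ is infinite this gives $\lc b \cap a_{\alpha,n} \rc = \omega$ for every $n$. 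Pairwise disjointness of the pieces then makes $\lc b \setminus a_{\alpha,n} \rc \geq \lc b \cap a_{\alpha, n+1} \rc = \omega$ too, so $a_{\alpha,n}$ splits $b$ for every $n$, which is exactly the tail-splitting property.

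The one step requiring care is the iterated definition of $h_\alpha$: using the naive intervals $[f_\alpha(k), f_\alpha(k+1))$ would not work, because $g_b \leq^\ast f_\alpha$ bounds the next $b$-gap past height $m$ only by $f_\alpha(m)$, which is not controlled by $f_\alpha(k+1)$ when $m$ is large. Iterating $f_\alpha$ through $h_\alpha$ ensures that the width of the $k$-th interval is precisely what is needed to capture the next $b$-element past its left endpoint; once that is in place the rest of the verification is routine bookkeeping.
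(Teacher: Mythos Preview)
The paper states this observation without proof, so there is nothing to compare against; your argument is correct and is essentially the standard one. The key points --- using $\bb = \dd = \aleph_1$ to obtain a $\leq^\ast$-increasing dominating scale, iterating each $f_\alpha$ to produce an interval partition whose blocks eventually trap a point of any given $b$, and then coloring the blocks by a fixed infinite partition of their indices --- are all sound, and your remark about why the naive intervals $[f_\alpha(k), f_\alpha(k+1))$ would not suffice is exactly right.
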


\begin{Def} \label{splitting-over-models-def}
Say there is a {\em splitting sequence of partitions over models} if there are 
$\bar M = \la M_\alpha : \alpha < \omega_1 \ra$ and $\bar A = \la a_{\alpha,n} :
\alpha < \omega_1, n < \omega \ra$ such that 
\begin{itemize}
\item $\bar M$ is a strictly increasing continuous sequence of countable models
of a 
  large enough fragment of ZFC
\item for each $\alpha$, $\la a_{\alpha,n}: n\in\omega\ra$
  is pairwise disjoint, belongs to $M_{\alpha+1}$, and all $a_{\alpha,n}$ split
all members
  of $M_\alpha$
\item whenever $b \in \omoms$, there are $\alpha$ and a model $N$ of a large
enough fragment
  of ZFC containing $b$ such that $M_\alpha \sub N$, $N \cap M = {M}_{\alpha}$,
and 
  all $a_{\alpha,n}$ split all members of $N$.
\end{itemize}
Here, $M = \bigcup_{\alpha < \omega_1}  M_\alpha$.
\end{Def} 

\begin{Lemma}\label{splitting-over-models}
The existence of a club-splitting sequence of partitions of length $\omega_1$
implies
the existence of a splitting sequence of partitions over models.
\end{Lemma}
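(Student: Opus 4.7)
The plan is to thin out the given club-splitting sequence of partitions along a continuous chain of countable elementary submodels. I will fix a sufficiently large regular cardinal $\theta$ and recursively build a strictly increasing continuous chain $\la M_\alpha : \alpha < \omega_1 \ra$ of countable elementary submodels of $H(\theta)$ with $\bar A \in M_0$ and $M_\alpha \in M_{\alpha + 1}$ for every $\alpha$. Setting $\delta_\alpha = M_\alpha \cap \omega_1$, I will declare the $\alpha$-th partition of the new sequence to be $\la a_{\delta_\alpha, n} : n < \omega \ra$, in the sense of Definition~\ref{splitting-over-models-def}.

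For the first two clauses, $\la a_{\delta_\alpha,n} : n<\omega \ra$ is pairwise disjoint by hypothesis and belongs to $M_{\alpha+1}$ because $\bar A \in M_{\alpha+1}$ and $\delta_\alpha$ is definable from the element $M_\alpha \in M_{\alpha+1}$. For the splitting clause I will use the standard fact that whenever $D$ is a club in $\omega_1$ lying in a countable elementary submodel $Q \prec H(\theta)$, the characteristic ordinal $Q \cap \omega_1$ belongs to $D$. Applied inside $M_\alpha$ this gives, for any $b \in M_\alpha \cap \omoms$, a club $D_b \in M_\alpha$ with $D_b \sub C_b$ (such a club exists by club-splitting together with elementarity), and therefore $\delta_\alpha \in D_b \sub C_b$, which is exactly the statement that every $a_{\delta_\alpha, n}$ splits $b$.

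The heart of the argument is the third clause. Given $b \in \omoms$, I would take any countable elementary submodel $N \prec H(\theta)$ with $\{b, \bar A, \bar M\} \sub N$ and set $\delta = N \cap \omega_1$. The map $\beta \mapsto \delta_\beta$ is a normal function on $\omega_1$ definable from $\bar M \in N$, so the club of its fixed points lies in $N$ and therefore contains $\delta$; hence $\delta_\delta = \delta$. Taking $\alpha = \delta$, continuity of the chain gives $M_\alpha = \bigcup_{\beta < \delta} M_\beta$. For each $\beta < \delta$ the model $M_\beta$ is a countable element of $N$, so by elementarity every element of $M_\beta$ is named by some $f(n)$ with $f \in N$ and $n \in \omega$, yielding $M_\beta \sub N$ and thus $M_\alpha \sub N$. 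Conversely, any $x \in N \cap M$ satisfies $\exists \beta \, (x \in M_\beta)$, so by elementarity there is some such $\beta \in N \cap \omega_1 = \delta$, placing $x$ in $M_\alpha$. Hence $N \cap M = M_\alpha$. Finally the club argument used above, applied now inside $N$, shows that every $a_{\delta_\alpha, n} = a_{\delta, n}$ splits every member of $N \cap \omoms$, and in particular $b$.

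The only real obstacle is ensuring that $N \cap M$ lands exactly on some $M_\alpha$ rather than strictly between two of them; this is taken care of automatically by the fixed-point identity $\delta_\delta = \delta$, which is forced the moment $\bar M \in N$. Beyond this, the argument reduces to standard elementary-submodel bookkeeping together with the defining property of club-splitting.
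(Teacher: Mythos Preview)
Your proposal is correct and follows essentially the same approach as the paper: build a continuous $\in$-chain of countable elementary submodels containing the given club-splitting sequence, reindex the partitions along the characteristic ordinals $\delta_\alpha = M_\alpha \cap \omega_1$, and for the third clause take any countable $N \prec H(\theta)$ with $\bar M, b \in N$ and use $\alpha = N \cap \omega_1$. Your write-up is in fact more detailed than the paper's in justifying $\delta_\delta = \delta$ and $N \cap M = M_\alpha$, but the underlying argument is identical.
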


\begin{proof}
Assume $\bar B = \la b_{\alpha,n} : \alpha < \omega_1, n < \omega \ra$ is a 
club-splitting sequence of partitions.
Let $\chi$ be a large enough regular cardinal.
Let $\bar{M} = \langle {M}_{\alpha}: \alpha < {\omega}_{1} \rangle$ be such that
for each $\alpha < {\omega}_{1}$
	\begin{enumerate}
		\item
			$\bar{B} \in {M}_{0}$, ${M}_{\alpha} \< H(\chi)$, $\lc
{M}_{\alpha} \rc = \omega$, and ${M}_{\alpha} \in {M}_{\alpha + 1}$.
		\item
			if $\alpha$ is a limit, then ${M}_{\alpha} =
{\bigcup}_{\xi < \alpha}{{M}_{\xi}}$. 
	\end{enumerate}
For each $\alpha < {\omega}_{1}$, let ${\delta}_{\alpha} = {M}_{\alpha} \cap
{\omega}_{1}$.
Define $\langle {a}_{\alpha, n}: n \in \omega \rangle = \langle
{b}_{{\delta}_{\alpha}, n}: n \in \omega \rangle$.
For any $\alpha < {\omega}_{1}$ and $x \in \cube \cap {M}_{\alpha}$, there is a
club $C \in {M}_{\alpha}$ such that for all $\delta \in C$ and $n \in \omega$,
${b}_{\delta, n}$ splits $x$.
As ${\delta}_{\alpha} \in C$, ${a}_{\alpha, n}$ splits $x$ for all $n \in
\omega$.
Next, if $b \in \cube$, then let $N \< H(\chi)$ be countable with $\bar{M} \in
N$ and $b \in N$.
Let $\gamma = N \cap {\omega}_{1}$.
It is clear that $N \cap \left( {\bigcup}_{\xi < {\omega}_{1}}{{M}_{\xi}}\right)
= {M}_{\gamma}$ and moreover, $\gamma = {\delta}_{\gamma}$.
Again, for any $x \in \cube \cap N$ there is a club $C \in N$ such that for all
$\delta \in C$ and $n \in \omega$, ${b}_{\delta, n}$ splits $x$.
As $\gamma = {\delta}_{\gamma} \in C$, we are done. 
\end{proof}

\begin{Theorem}\label{splitting-over-models-aclosed}
The existence of a splitting sequence of partitions over models implies
$\aa_\closed = \aleph_1$.
\end{Theorem}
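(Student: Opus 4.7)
The approach is to construct, by transfinite recursion on $\alpha<\omega_1$, a MAD family $\A=\bigcup_{\alpha<\omega_1}X_\alpha$ in which each $X_\alpha$ is a countable closed subset of $\omoms$, thereby witnessing $\aa_\closed\leq\aleph_1$. The recursion is patterned after the classical construction of a MAD family of size $\omega_1$ from $\dd=\aleph_1$, with the partitions $\la a_{\alpha,n}:n<\omega\ra$ and the models $M_\alpha$ playing the role of a dominating scale.

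At stage $\alpha$, after a standard bookkeeping ensuring the construction so far is sufficiently definable from parameters in $M_\alpha$, the set $y_\alpha:=\bigcup\{a:a\in\A\cap M_\alpha\}$ belongs to $M_\alpha$. The splitting hypothesis then gives that $a_{\alpha,n}$ splits $y_\alpha$, so $b^\alpha_n:=a_{\alpha,n}\setminus y_\alpha$ is an infinite subset of $a_{\alpha,n}$, and we set $X_\alpha=\{b^\alpha_n:n<\omega\}$. By construction the $b^\alpha_n$ are pairwise disjoint within stage $\alpha$ and disjoint from every previously added element of $\A$, so the total family is almost disjoint. Pairwise disjointness also forces any convergent subsequence of $X_\alpha$ in $2^\omega$ to limit to a finite set (hence outside $\omoms$), so $X_\alpha$ is closed in $\omoms$.

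For maximality, fix $b\in\omoms$ and apply the hypothesis of the theorem to obtain $\alpha$ and a countable model $N$ with $b\in N$, $M_\alpha\sub N$, $N\cap M=M_\alpha$, and every $a_{\alpha,n}$ splitting every member of $N$. In the main case $b\not\sub^*y_\alpha$: since $b\setminus y_\alpha\in N$ is infinite, the splitting hypothesis applied to it yields $|b\cap b^\alpha_n|=|b\cap a_{\alpha,n}\setminus y_\alpha|=\omega$, so $b^\alpha_n\in\A$ witnesses that $b$ is not almost disjoint from $\A$.

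The main obstacle is the remaining case $b\sub^*y_\alpha$, where $b$ is almost contained in the earlier almost disjoint union. If some $b^\gamma_k$ with $\gamma<\alpha$ already meets $b$ infinitely we are done, but the subcase in which $b$ has only finite, nonempty intersection with infinitely many pieces $b^\gamma_k$ is not addressed by the above construction. To handle this one would enrich each $X_\alpha$ with additional closed almost disjoint material --- for instance, an appropriate family of pairwise almost disjoint transversals through each earlier partition $\{b^\gamma_k:k<\omega\}$, chosen using the full strength of ``$a_{\alpha,n}$ splits \emph{every} $N$-member'' applied to $N$-definable sets derived from $b$ and $\bigcup_k b^\gamma_k$. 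Balancing the thinning needed to keep $\A$ almost disjoint against the thickness needed to catch these threaded reals is the delicate technical step of the proof.
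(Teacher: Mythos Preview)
Your proposal contains a genuine gap, which you yourself acknowledge: the case $b\sub^* y_\alpha$ with $b$ almost disjoint from each individual $b^\gamma_k$ is not handled, and your suggestion of adding ``transversals through each earlier partition'' is too vague to count as an argument. This case is not a technicality---it is the heart of the proof. Your closed sets $X_\alpha=\{a_{\alpha,n}\sem y_\alpha:n<\omega\}$ cannot catch such a $b$, since $b\sub^* y_\alpha$ makes $b$ almost disjoint from everything in $X_\alpha$; and a threading $b$ can equally well thread any countable family of transversals you add, so the problem recurs. There is also a secondary issue: the models $M_\alpha$ are fixed in advance by the hypothesis, so you cannot invoke ``standard bookkeeping'' to place $y_\alpha$ in $M_\alpha$; one only gets that each earlier closed set individually lies in $M_\alpha$, because it is definable from data in some $M_{\beta+1}$.

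The paper's approach (taken from \cite{BKta}) is structurally different from yours. The closed sets $A_\alpha$ are not countable discrete families but sets of \emph{branches through a tree} whose levels form a partition of a subset of $\omega$. From the partition $\la a_{\alpha,n}:n<\omega\ra$ one first builds a doubly-indexed system $C^\Theta_\sigma$ of pairwise disjoint sets (removing finitely many ``excluded points'' and a set $X_\sigma\in M_\alpha$), and $A_\alpha$ consists of the branches of the resulting tree. For maximality one takes $b$, finds $\alpha$ and $N$ as in the third clause of Definition~\ref{splitting-over-models-def}, and---assuming $b$ is a.d.\ from all earlier $A_\beta$---recursively builds inside $N$ auxiliary sets $Y_j\sub b$ and functions $g_j$, using at each step that the relevant $C^\Theta_\sigma$ splits the current $Y_j\in N$. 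The output is a branch $h$ whose associated $a\in A_\alpha$ is actually a \emph{subset} of $b$. The tree structure is precisely what allows this diagonalisation through $b$, and it is what your discrete $X_\alpha$'s lack.
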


\begin{proof}
This follows from a straightforward analysis of the proof of~\cite[Lemma
3.4]{BKta}.
Since the proof of the latter lemma is rather long and technical, we will not
repeat it here
and simply stress the main points. We assume the reader to have a copy
of~\cite{BKta}
at hand.

Assume we are at stage $\alpha$, and closed sets $A_\beta \in M_\alpha$ have
been 
constructed so that $\bigcup_{\beta<\alpha} A_\beta$ is an almost disjoint
family. (We do not assume
that the whole sequence of the $A_\beta$ belongs to $M_\alpha$; this does not
matter.)
The $A_\beta$ are obtained as sets of branches through a tree whose levels form
a partition of a subset of $\omega$. Now, from the $a_{\alpha,n}$, one obtains 
a sequence $C_\sigma^\Theta$ of pairwise disjoint subsets of $\omega$, where
$\sigma \in \omlom$ and $\Theta$ comes from a certain set of finite sequences of
finite
sequences, which is used to construct the next set $A_\alpha$. To obtain the
$C_\sigma^\Theta$
from the $a_{\alpha,n}$, one has to remove finitely many elements (the
``excluded points")
as well as a set from $M_\alpha$ (the set $X_\sigma$), see the end of part 1 in
the
proof of~\cite[Lemma 3.4]{BKta} for details. Obviously, the resulting
$C_\sigma^\Theta$ will still
split all $Y \in M_\alpha$ such that $Y \sem X_\sigma$ is infinite, and this is
all that's
needed for the rest of the proof to go through. This completes the construction
of
the $A_\alpha$. We need to check they are as required.

Part 2 of the proof of~\cite[Lemma 3.4]{BKta} does not apply, 
and steps 1 and 2 of part 3 carry over without any change.
The heart of the proof is step 3 of part 3 (the last part of the proof), namely,
the argument showing that $\bigcup_{\beta< \omega_1} A_\beta$ is indeed
maximal. Take any $Y \in \omom$. Find
$\alpha$ and $N$ such that they satisfy the last clause of
Definition~\ref{splitting-over-models-def}
for $b = Y$.
Now, as in the proof of~\cite[Lemma 3.4]{BKta}, build functions $g_j \in \omom
\cap N$ and
a decreasing sequence of subsets $Y_j \in N$ of $Y$. This is possible because
$M_\alpha \sub N$. (Again, we do not require that
the sequences of the $g_j$ or $Y_j$ belong to $N$, but this is not needed.)
Assume that $Y$ is almost disjoint from all elements of $A_\beta$, for $\beta <
\alpha$.
Using the $g_j$ and $Y_j$ a function $h$ is constructed such that the branch in
$A_\alpha$
associated with $h$ is a subset of $Y$, i.e. there is $a \in A_\alpha$ with $a
\sub Y$.
For the construction of $h$, the splitting properties of the $C^\Theta_\sigma$
together
with the fact that any initial segment of $h$ is constructed in $N$ are used.
\end{proof}

Using the theorem, we obtain two results from the literature as corollaries.

\begin{Cor}[Brendle and Khomskii, \cite{BKta}]
In the Hechler model, $\aa_\closed = \aleph_1$. In particular, $\bb >
\aa_\closed$ is consistent.
\end{Cor}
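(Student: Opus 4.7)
The plan is to chain the results already established in this section. First, by the first Observation (due to Baumgartner--Dordal), the Hechler model contains a tail-splitting sequence of partitions $\bar A = \la a_{\alpha,n} : \alpha < \omega_1, n < \omega \ra$ of length $\omega_1$. Second, I would observe that any tail-splitting sequence of partitions is automatically a club-splitting sequence of partitions: for $b \in \omoms$, if $\alpha_0$ witnesses tail-splitting, then $C_b = \{\alpha < \omega_1 : \text{all } a_{\alpha,n} \text{ split } b\}$ contains the tail $[\alpha_0, \omega_1)$, which is a club in $\omega_1$.

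Third, by Lemma \ref{splitting-over-models}, the club-splitting sequence of partitions yields a splitting sequence of partitions over models $(\bar M, \bar A')$. Fourth, by Theorem \ref{splitting-over-models-aclosed}, the existence of such a splitting sequence over models implies $\aa_\closed = \aleph_1$. This gives $\aa_\closed = \aleph_1$ in the Hechler model.

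For the ``in particular'' clause, I would recall that the Hechler model is standardly defined as the finite (or countable) support iteration of Hechler forcing of length $\omega_2$ over a model of CH, and in this model $\bb = \dd = \aleph_2$ because each Hechler real dominates the reals from previous stages. In particular $\bb = \aleph_2 > \aleph_1 = \aa_\closed$, so $\bb > \aa_\closed$ is consistent relative to ZFC.

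There is no real obstacle here: all the substantive combinatorics was already carried out in the first Observation (preservation of tail-splitting sequences of partitions under Hechler iteration), in Lemma \ref{splitting-over-models} (reflection to a splitting sequence over models via elementary substructures with $\delta_\alpha = M_\alpha \cap \omega_1$), and in Theorem \ref{splitting-over-models-aclosed} (the delicate modification of the Brendle--Khomskii construction of a closed MAD union). The corollary is pure bookkeeping combining those three ingredients with the classical computation of $\bb$ in the Hechler model.
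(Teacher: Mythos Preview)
Your argument is correct and is exactly the chain the paper intends: the first Observation gives a tail-splitting sequence of partitions in the Hechler model, hence a club-splitting one, and then Lemma~\ref{splitting-over-models} and Theorem~\ref{splitting-over-models-aclosed} combine to give $\aa_\closed = \aleph_1$. The paper leaves the corollary without an explicit proof, but your reconstruction matches the implicit reasoning.
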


\begin{Cor}[Raghavan and Shelah, \cite{RS12}]
$\dd = \aleph_1$ implies $\aa_\closed = \aleph_1$.
\end{Cor}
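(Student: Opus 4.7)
The plan is to derive this corollary by chaining together the preceding results in a direct way, so the proof should be quite short. I would organize it as three short links in a chain that together bridge $\dd = \aleph_1$ and $\aa_\closed = \aleph_1$.

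First, I would invoke the Observation (stated just after Definition of tail/club-splitting sequences of partitions) that $\dd = \aleph_1$ implies the existence of a tail-splitting sequence of partitions of length $\omega_1$. Although the proof was not given, one reasonable way to see it is: take an increasing dominating family $\{f_\alpha : \alpha < \omega_1\}$ with $f_\alpha$ strictly increasing, and for each $\alpha$ let the partition $\langle a_{\alpha,n} : n\in\omega\rangle$ be given by intervals (or unions of cofinitely many consecutive intervals along $f_\alpha$) chosen so that whenever $f_\alpha$ dominates the enumeration function of $b$, each $a_{\alpha,n}$ splits $b$; since $\dd = \aleph_1$, every $b$ is dominated by some $f_\alpha$ and hence by all $f_\beta$ for $\beta\geq\alpha$, producing the tail-splitting property. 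In any case, the Observation is already available to us as stated.

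Next, I would promote this to a club-splitting sequence of partitions. This is trivial: if $b$ is split by all $a_{\beta,n}$ for $\beta \geq \alpha_b$ and $n\in\omega$, then the set $C_b = \{\beta<\omega_1 : a_{\beta,n} \text{ splits } b \text{ for all } n\}$ contains the tail $[\alpha_b,\omega_1)$, and tails are clubs in $\omega_1$. Thus the same sequence $\langle a_{\alpha,n} : \alpha<\omega_1, n\in\omega\rangle$ witnesses club-splitting.

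Finally, I would apply Lemma \ref{splitting-over-models} to pass from a club-splitting sequence of partitions of length $\omega_1$ to a splitting sequence of partitions over models $\langle M_\alpha : \alpha<\omega_1\rangle$, $\langle a_{\alpha,n} : \alpha<\omega_1, n<\omega\rangle$; and then Theorem \ref{splitting-over-models-aclosed} to conclude $\aa_\closed = \aleph_1$ (the other inequality $\aa_\closed \geq \aleph_1$ is immediate since $\aa_\closed \leq \aa$ and $\aa$ is uncountable). There is no real obstacle: the entire technical work has already been done in the previous lemma, theorem, and observation. The only mildly delicate point is making sure the Observation on $\dd = \aleph_1$ genuinely yields a tail-splitting sequence of \emph{partitions}, not merely a tail-splitting sequence; but this is exactly what is asserted, and the interval construction above provides it.
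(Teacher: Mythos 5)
Your proposal is correct and follows exactly the route the paper intends: the Observation that $\dd = \aleph_1$ yields a tail-splitting sequence of partitions of length $\omega_1$, the trivial remark that tail-splitting implies club-splitting (a tail is a club), and then Lemma~\ref{splitting-over-models} followed by Theorem~\ref{splitting-over-models-aclosed}. This matches the paper, which derives the corollary as an immediate consequence of that theorem via the same chain.
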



\section{Tail splitting: a consistency result} \label{sec:clubandtail}

In this section, we show that tail-splitting and club-splitting are not the
same.

\begin{Theorem} \label{tail-club}
It is consistent that there is a club-splitting family of size $\aleph_1$
and there is no tail-splitting family of size $\aleph_1$.  In particular, $\sss
= \aleph_1$.
\end{Theorem}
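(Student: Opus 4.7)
The plan is to construct the model by a countable-support iteration $\la \P_\beta, \dot\Q_\beta : \beta < \omega_2 \ra$ of proper forcings over a ground model $V \models \CH$, preserving the club-splitting property of a specific family $\bar A = \la a_\alpha : \alpha < \omega_1 \ra \in V$ while diagonalizing against every potential $\omega_1$-indexed tail-splitting family. Since $\dd = \aleph_1$ holds under $\CH$, the second observation of Section~\ref{sec:club} furnishes a tail-splitting sequence of partitions of length $\omega_1$ in $V$, which I take as $\bar A$; a mild extra genericity condition --- for instance, each $a_\alpha$ being Cohen-generic over a countable transitive model containing $\la a_\beta : \beta < \alpha \ra$ --- is imposed so as to drive the later preservation argument. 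A bookkeeping device enumerates all $\P_{\omega_2}$-names for $\omega_1$-indexed sequences of infinite subsets of $\omega$, so that each is considered cofinally often along the iteration.

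At stage $\beta$ with candidate $\bar B_\beta = \la b^\beta_\xi : \xi < \omega_1 \ra$ delivered by the bookkeeping, I work in $V^{\P_\beta}$ and ask whether there is an $F_\sigma$ filter $\F$ (extending the Fr{\'e}chet filter) such that $\{\xi < \omega_1 : b^\beta_\xi \in \F \text{ or } \omega \sem b^\beta_\xi \in \F\}$ is cofinal in $\omega_1$. If so, I set $\dot\Q_\beta = \Mi(\F)$; the generic real $c_\beta$ is then almost contained in cofinally many $b^\beta_\xi$ or their complements, and hence cofinally many $b^\beta_\xi$ fail to split $c_\beta$, destroying the tail-splitting property of $\bar B_\beta$. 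If no such $\F$ exists, a short pigeonhole/Ramsey argument shows that $\bar B_\beta$ already fails to be tail-splitting, and I set $\dot\Q_\beta$ trivial. The structural similarity with the $F_\sigma$-filter plus Mathias-Prikry combinatorics developed in Sections~\ref{sec:blessthanaclosed}--\ref{sec:cccproof} is deliberate and is what makes the ensuing analysis feasible.

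The main technical obstacle, and where the bulk of the work sits, is preserving the club-splitting property of $\bar A$ throughout the iteration. For a single factor $\Mi(\F)$ with generic $c_\beta$, the strategy is to show that for every countable $M \prec H(\chi)$ containing $\bar A$, $\F$, and the $\Mi(\F)$-name for $c_\beta$, the ordinal $\delta_M := M \cap \omega_1$ lies in $\{\alpha < \omega_1 : a_\alpha \text{ splits } c_\beta\}$; the Cohen-genericity of $a_{\delta_M}$ over the relevant Mathias data inside $M$ is precisely what forces the split, so a club of such $\delta_M$ witnesses that $c_\beta$ is club-split by $\bar A$. Lifting this to the full iteration requires a Shelah-style preservation theorem of the form ``preservation of club-splitting of $\bar A$'' is preserved by countable-support iteration of proper forcings, proven by the standard two-step/tower-of-submodels induction. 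In $V[G_{\omega_2}]$ the family $\bar A$ remains club-splitting of size $\aleph_1$, so $\sss = \aleph_1$; conversely, by a standard reflection argument (using properness and $\CH$ to keep the iteration sufficiently bounded), every $\omega_1$-indexed family in $V[G_{\omega_2}]$ already lies in some $V[G_\beta]$ and was handled at a later stage, so no tail-splitting family of size $\aleph_1$ survives.
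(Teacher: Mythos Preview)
Your proposal has two genuine gaps, both at the heart of the construction.

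First, the existence of the $F_\sigma$ filter $\F$ is not established. You claim that either there is an $F_\sigma$ filter containing cofinally many $b^\beta_\xi$ (or their complements), or else a ``short pigeonhole/Ramsey argument'' shows $\bar B_\beta$ is already not tail-splitting; but you give no such argument, and there is no reason to expect one. A tail-splitting family need not have an uncountable subfamily (or subfamily of complements) that is centered, let alone one captured by an $F_\sigma$ filter. The paper handles this point by an explicit recursive construction: from a tail-splitting $\bar B$ (under $\CH$) it builds a $\subset^*$-decreasing sequence $\la c_\alpha : \alpha<\omega_1\ra$ with $c_\alpha \subset^* b_{\zeta_\alpha}$ for some $\zeta_\alpha\geq\alpha$, using tail-splitting at each step to pass to the next $c_{\alpha+1}$. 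The resulting filter $\HHH$ is a $P$-filter, not $F_\sigma$, and the paper forces with $\LL(\HHH)$ (Laver forcing with branching sets in $\HHH$), which is $\sigma$-centered, in a finite-support ccc iteration rather than a countable-support proper one.

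Second, and more seriously, your preservation argument does not work as stated. You want $a_{\delta_M}$ to split the Mathias--Prikry generic $c_\beta$ by virtue of being Cohen-generic over a countable ground-model structure, but the filter $\F$ lives in $V^{\P_\beta}$, not in the structure over which $a_{\delta_M}$ is generic; nothing prevents $a_{\delta_M}$ (or its complement) from lying in $\F$, in which case $c_\beta\subset^* a_{\delta_M}$ and $a_{\delta_M}$ fails to split $c_\beta$. Preservation of club-splitting therefore requires an explicit compatibility condition between $\bar A$ and the filter you force with. The paper isolates exactly such a condition, called $(\star)_{\bar A,\HHH}$: for every partial $f:\omega\to\omega$ with $\dom(f)\in\HHH^+$ and finite fibers modulo $\HHH$, the set of $\alpha$ with both $f^{-1}(a_\alpha)$ and $f^{-1}(\omega\setminus a_\alpha)$ in $\HHH^+$ contains a club. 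It then proves two lemmas: $(\star)_{\bar A,\HHH}$ implies $\LL(\HHH)$ preserves club-splitting of $\bar A$ (via a rank analysis of $\LL(\HHH)$), and under $\CH$ one can, while carrying out the recursive construction of $\HHH$ from $\bar B$, simultaneously arrange $(\star)_{\bar A,\HHH}$ by interleaving the club-splitting of $\bar A$ into the choice of the $c_\alpha$. This interleaving is the real content of the argument and has no counterpart in your proposal.
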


Assume $\bar A = \la a_\alpha : \alpha < \omega_1\ra$ is club-splitting. Let
$\P$ be a forcing notion.
Say that {$\P$ preserves $\bar A$} if $\bar A$ is still club-splitting in the
$\P$-generic extension.
It is easy to see that if $(\P_\alpha : \alpha < \delta)$ is an fsi of ccc
forcing and all
$\P_\alpha$ ($\alpha < \delta$) preserve $\bar A$, then so does $\P_\delta$.

Also let $\HHH$ be a filter on $\omega$.
We say that $(\star)_{\bar A,\HHH}$ holds if for every partial function $f :
\omega \to \omega$
with $\dom (f) \in \HHH^+$ and $f^{-1} ( \{ n \} ) \in \HHH^*$, the set $D_f =
\{ \alpha < \omega_1 : f^{-1} (a_\alpha)$ and
$f^{-1} (\omega \sem a_\alpha)$ both belong to $\HHH^+ \}$ contains a club.

\begin{Lemma}
Assume $(\star)_{\bar A,\HHH}$ holds. Then $\LL (\HHH)$ preserves $\bar A$.
\end{Lemma}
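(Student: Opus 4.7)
The plan is to reduce the preservation of club-splitting for $\bar A$ to the hypothesis $(\star)_{\bar A,\HHH}$ via a standard Laver fusion. Fix a condition $T \in \LL(\HHH)$ and a name $\dot{b}$ for an infinite subset of $\omega$; write $T|_t$ for the subtree of $T$ consisting of nodes comparable with $t$. First I would build, by fusion, a condition $T' \leq T$ with the same stem such that for every $t \in T'$ extending $\stem(T')$ and every $k \in \succc_{T'}(t)$ there is a pre-decided value $n_{t,k} \geq k$ with $T'|_{t^\frown k} \Vdash n_{t,k} \in \dot{b}$. At each fusion stage, for each relevant $(t,k)$ one picks an extension of $T|_{t^\frown k}$ that decides such an $n_{t,k}$ (possible because $\dot{b}$ is forced infinite), and then absorbs these extensions by thinning successor sets to $\HHH$-large subsets.

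Next, for each node $t$ of $T'$ above the stem, define a partial function $f_t : \omega \to \omega$ by $\dom(f_t) = \succc_{T'}(t)$ and $f_t(k) = n_{t,k}$. Since $T' \in \LL(\HHH)$ one has $\dom(f_t) \in \HHH \sub \HHH^+$, and since $n_{t,k} \geq k$ every fiber $f_t^{-1}(\{n\})$ is contained in $\{0,\dots,n\}$, hence finite, hence in $\HHH^*$. Applying $(\star)_{\bar A,\HHH}$ to each $f_t$ yields a club $C_t \sub D_{f_t}$ in the ground model; intersecting over the countably many such $t$ produces a single ground-model club $C \sub \omega_1$.

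The central claim is then that $T' \Vdash$ ``$a_\alpha$ splits $\dot{b}$ for every $\alpha \in C$''. To verify it, fix $\alpha \in C$, a condition $q \leq T'$ with stem $t^*$, and $N \in \omega$. Since $\alpha \in D_{f_{t^*}}$, both $f_{t^*}^{-1}(a_\alpha)$ and $f_{t^*}^{-1}(\omega \sem a_\alpha)$ lie in $\HHH^+$; intersecting each with $\succc_q(t^*) \in \HHH$ preserves $\HHH$-positivity, so each intersection is infinite. Because the values $n_{t^*,k}$ along any infinite subset of $\dom(f_{t^*})$ are unbounded (finite fibers), one can choose a $k$ in each intersection with $n_{t^*,k} > N$; then $q|_{(t^*)^\frown k}$ extends $q$ and forces $n_{t^*,k}$ into $\dot{b}$ on the prescribed side of $a_\alpha$. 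Since such $T'$ are dense below every $T \in \LL(\HHH)$, the generic extension contains a ground-model club of $\alpha$'s for which $a_\alpha$ splits $\dot{b}$, so $\bar A$ remains club-splitting.

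The main obstacle I anticipate is executing the opening fusion cleanly: pre-deciding $n_{t,k}$ simultaneously at every node $t$ and every successor $k$ of $T'$ while keeping every successor set in $\HHH$. This is standard for Laver-style trees once one notes that a strengthening which decides a single value of $\dot{b}$ can always be reabsorbed by a thinning of successor sets that remains in $\HHH$. Everything after the fusion is routine bookkeeping, combining $(\star)_{\bar A,\HHH}$ with the fact that the intersection of an $\HHH^+$ set with an $\HHH$ set is again $\HHH^+$.
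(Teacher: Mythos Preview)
Your overall plan---manufacture partial functions from the name, apply $(\star)_{\bar A,\HHH}$, intersect the resulting clubs---is the same as the paper's. The gap is in the opening fusion, and it is not a matter of bookkeeping.

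You want $T'\leq T$ with the same stem such that for every $t\in T'$ above the stem and every $k\in\succc_{T'}(t)$, the condition $T'|_{t^\frown k}$ already forces some $n_{t,k}\in\dot b$. But in $\LL(\HHH)$ every node at or above the stem must have $\HHH$-large successor set, so $T'|_{t^\frown k}$ has stem exactly $t^\frown k$. There is no reason that any condition with stem $t^\frown k$ decides an element of $\dot b$ above $k$: when you pick an extension $S\leq T|_{t^\frown k}$ deciding such a value, its stem is typically strictly longer than $t^\frown k$, and ``absorbing'' $S$ leaves $t^\frown k$ with a single successor. Thinning successor sets cannot repair this; the obstruction is that the node $t^\frown k$ itself becomes non-branching. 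A concrete instance: if $\dot b$ names the even elements of the range of the Laver generic and $\HHH$ contains the odd numbers, then for odd $k$ no condition with stem $t^\frown k$ forces any $n\geq k$ into $\dot b$, and you cannot thin the successor set of $t$ to evens while staying in $\HHH$.

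The paper's proof sidesteps this by a rank analysis rather than a fusion. First it thins $\dot b$ so that its increasing enumeration $\dot g$ dominates the generic real. Then, for each $n$, it locates nodes $s$ that are \emph{good for $n$}: $s$ itself favors no value of $\dot g(n)$, but the set of $m$ for which $s^\frown m$ favors some value is $\HHH^+$. Here ``$s$ favors $\varphi$'' means merely that some condition with stem $s$ forces $\varphi$. The resulting partial function $f_{s,n}(m)=$ (a value favored by $s^\frown m$) then has $\HHH^+$ domain and fibers in $\HHH^*$, exactly what $(\star)$ needs. The point is that one does not try to decide values along \emph{all} successors of a fixed node; one instead finds, for each $n$, nodes where $\HHH^+$-many successors \emph{favor} a value. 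This weaker requirement (favors rather than forces, $\HHH^+$ rather than $\HHH$) is what makes the argument go through, and it also yields a single club for the name $\dot b$ independent of the starting condition.
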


\begin{proof}
Let $\mathring a$ be an $\LL(\HHH)$-name for an infinite subset of $\omega$. We
need to find a
club set $C \subset \omega_1$ in the ground model such that the trivial
condition forces
that $a_\alpha$ splits $\mathring a$ for all $\alpha \in C$. We can assume that
$\mathring a$ is thin in the
sense that the increasing enumeration $\mathring g$ of $\mathring a$ is forced
to dominate the generic
Laver real $\mathring \ell$. 

We briefly recall the standard rank analysis of Laver forcing $\LL (\HHH)$. Let
$\varphi$ be
a formula. For any $s \in \omlom$, say that $s$ {\em forces} $\varphi$ if there
is a condition
with stem $s$ which forces $\varphi$. Say that $s$ {\em favors} $\varphi$ if $s$
does not
force $\neg \varphi$. Define the rank function $\rk_\varphi$ by induction:
\begin{itemize}
\item $\rk_\varphi (s) = 0$ iff $s$ forces $\varphi$
\item $\rk_\varphi (s) \leq \alpha$ iff there is $c \in \HHH^+$ such that
$\rk_\varphi (s\ha{}n) < \alpha$
  for all $n \in c$
\item $\rk_\varphi (s) = \alpha$ iff $\rk_\varphi (s) \leq \alpha$ but
$\rk_\varphi (s) \not\leq \beta$ for $\beta < \alpha$.
\end{itemize}
A standard argument shows that $s$ favors $\varphi$ iff $\rk_\varphi (s) <
\omega_1$.
(Suppose $\rk_\varphi (s)$ is undefined. Then one constructs a tree $T \in \LL
(\HHH)$ with stem $s$
such that for all nodes $t \in T$ extending $s$, $\rk_\varphi (t)$ is undefined.
In particular, no extension
of $s$ in $T$ has rank $0$, and therefore $T$ must force $\neg \varphi$. Thus
$s$ does not favor $\varphi$.
Suppose, on the other hand, that $s$ forces $\neg\varphi$. We prove by induction
on $\alpha$ that
$\rk_\varphi (s) > \alpha$. This is obvious for $\alpha = 0$. So assume $\alpha
> 0$. Let
$T \in \LL (\HHH)$ be a tree with stem $s$ witnessing that $s$ forces
$\neg\varphi$. Let $c \in \HHH$
be the successor level of $s$ in $T$. By induction hypothesis $\rk_\varphi
(s\ha{} n) \geq\alpha$
for all $n \in c$. By definition of the rank, we see that $\rk_\varphi (s) >
\alpha$.)

Say that $s \in \omlom$ is {\em good for} $n$ if $s$ does not favor $\mathring g
(n) = k$ for any $k$,
but $\{ m : s\ha{} m$ favors $\mathring g (n) =k$ for some $k\}$ is
$\HHH$-positive.
\begin{ssclaim} 
If $|s| \leq n$ and $\stem (T) = s$, then there is $t \in T$ extending $s$ which
is good for $n$.
\end{ssclaim}

\begin{proof}
Define a new rank function $\rho$ by stipulating 
\begin{itemize}
\item $\rho (t) = 0$ if $t$ favors $\mathring g(n) = k$ for some $k$
\item $\rho (t) \leq \alpha$ iff there is $c \in \HHH^+$ such that $\rho
(t\ha{}n) < \alpha$
  for all $n \in c$.
\end{itemize}
Notice that $\rho (s) < \omega_1$. (Otherwise there would be a tree $T' \in \LL
(\HHH)$ with stem $s$ such that
all nodes of $T'$ extending $s$ have undefined rank. Now find $t \in T'$
extending $s$ and forcing $\mathring g(n) = k$
for some $k$. Clearly $\rho (t) =0$, a contradiction.) On the other hand, $|s|
\leq n$ and $\mathring g \geq \mathring\ell$
imply that $\rho(s) \geq 1$ because for each $k$ there is a tree $T'$ with stem
$s$ forcing $\mathring \ell (n) > k$ and,
hence, $\mathring g (n) > k$. Thus we can find $t \in T$ extending $s$ such that
$\rho (t) = 1$. By definition,
this means that $t$ does not favor $\mathring g(n) = k$ for any $k$, and that
$\{ m : t\ha{} m$ favors $\mathring g (n) = k$
for some $k \}$ belongs to $\HHH^+$.
\end{proof}

For each node $s$ which is good for $n$, define a partial function $f_{s,n}$ by
letting $\dom (f_{s,n}) =
\{ m : s\ha{} m$ favors $\mathring g (n) =k$ for some $k\}$ and setting $f_{s,n}
(m) = k$ for some $k$ such that
$s\ha{} m$ favors $\mathring g (n) =k$, for $m \in \dom (f_{s,n})$. Note that
such $k$ is not necessarily unique,
but this does not matter. By definition of goodness, it is immediate that
$f_{s,n}$ satisfies the stipulations
in the definition of $(\star)_{\bar A,\HHH}$, i.e. $\dom (f_{s,n} ) \in \HHH^+$
and $f_{s,n}^{-1} (\{k\}) \in  \HHH^*$
for all $k$. Now let $C$ be the intersection of all $D_{f_{s,n}}$ where $s$ is
good for $n$. We show that
$C$ is as required. 

\begin{ssclaim}
The trivial condition forces that $a_\alpha$ splits $\mathring a$ for all
$\alpha \in C$.
\end{ssclaim}

\begin{proof}
Let $T$ be any condition and $n_0$ a natural number. We need to find $n, n' \geq
n_0$ and $T'\leq T$
such that $T' $ forces $\mathring g (n) \in a_\alpha$ and $\mathring g(n')
\notin a_\alpha$. Since the proofs are identical,
we only produce $n$. Let $s$ be the stem of $T$. Choose $n \geq n_0, |s|$. By
the previous claim,
there is $t\in T$ extending $s$ which is good for $n$. Hence $f_{t,n}$ is
defined. Since $\alpha \in 
D_{f_{t,n}}$, $f^{-1}_{t,n} (a_\alpha)$ belongs to $\HHH^+$. Hence we can find
$m \in \dom (f_{t,n})$ in the
successor level of $t$ in $T$ such that $k:=f_{t,n} (m) \in a_\alpha$. Since
$t\ha{} m$ favors $\mathring g (n) = k$,
there is a subtree $T'$ of $T$ with stem extending $t\ha{}m$ which forces
$\mathring g(n) = k$.
Therefore $T'$ forces $\mathring g (n) \in a_\alpha$, as required.
\end{proof}

This completes the proof of the lemma.
\end{proof}

\begin{Lemma}
Assume CH. Assume $\bar B =\la b_\alpha : \alpha < \omega_1 \ra$ is
tail-splitting. Then there is
$\{ c_\alpha : \alpha < \omega_1 \}$ such that $c_\alpha \sub b_{\zeta_\alpha}$
for some
$\zeta_\alpha \geq \alpha$ 
and the $c_\alpha$ generate a P-filter $\HHH$ such that $(\star)_{\bar A,\HHH}$
holds.
\end{Lemma}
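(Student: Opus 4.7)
The plan is to construct the tower $(c_\alpha)_{\alpha<\omega_1}$ recursively along a chain of countable elementary submodels. Using CH, fix a continuous increasing chain $\la M_\alpha : \alpha < \omega_1 \ra$ of countable elementary submodels of $H(\chi)$ for a sufficiently large regular $\chi$, with $\bar A, \bar B \in M_0$, arranged so that $M := \bigcup_{\alpha < \omega_1} M_\alpha$ contains every real (hence every partial function $\omega \to \omega$). Set $\delta_\alpha := M_\alpha \cap \omega_1$; then $\{\delta_\alpha : \alpha < \omega_1\}$ is a club, $\delta_\alpha \notin M_\alpha$ while $\delta_\alpha \in M_{\alpha+1}$. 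Let $\HHH_\alpha$ be the filter generated by $\{c_\xi : \xi < \alpha\}$ and set $\HHH := \HHH_{\omega_1}$.

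At stage $\alpha$ I choose $c_\alpha \in M_{\alpha+1}$ so as to meet four requirements: \emph{(i)} $c_\alpha \subseteq^* c_\xi$ for every $\xi < \alpha$, whence $\HHH$ is automatically a P-filter; \emph{(ii)} $c_\alpha \subseteq b_{\zeta_\alpha}$ for some $\zeta_\alpha \geq \alpha$, exploiting tail-splitting of $\bar B$; \emph{(iii)} $c_\alpha \cap X$ is infinite for every $X \in M_\alpha \cap \HHH_\alpha^+$; \emph{(iv)} for every partial $f \in M_\alpha$ satisfying the \emph{tentative hypothesis} $\dom(f) \in \HHH_\alpha^+$ and $f^{-1}(\{n\}) \in \HHH_\alpha^*$ for all $n$, both $c_\alpha \cap f^{-1}(a_{\delta_\alpha})$ and $c_\alpha \cap f^{-1}(\omega \setminus a_{\delta_\alpha})$ are infinite. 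These amount to countably many positivity demands at each stage. For \emph{(iv)} the tentative hypothesis forces $f[c_\xi]$ to be infinite for every $\xi < \alpha$, so club-splitting of $\bar A$ yields a club $D_{f,\xi} \in M_\alpha$ of ordinals $\eta$ for which $a_\eta$ splits $f[c_\xi]$, and elementarity of $M_\alpha$ gives $\delta_\alpha \in D_{f,\xi}$; hence $f^{-1}(a_{\delta_\alpha}) \cap c_\xi$ is infinite for every $\xi < \alpha$, i.e. $f^{-1}(a_{\delta_\alpha}) \in \HHH_\alpha^+$ (and similarly for the complement). A standard diagonalization now produces a pseudo-intersection $d$ of $\{c_\xi : \xi < \alpha\}$ meeting each of the countably many targets of \emph{(iii)} and \emph{(iv)} infinitely; tail-splitting of $\bar B$ supplies a $\zeta_\alpha \geq \alpha$ so large that $b_{\zeta_\alpha}$ splits $d \cap T$ for every such target $T$, and we set $c_\alpha := d \cap b_{\zeta_\alpha}$.

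To verify $(\star)_{\bar A, \HHH}$ fix $f$ with $\dom(f) \in \HHH^+$ and $f^{-1}(\{n\}) \in \HHH^*$ for all $n$. Let $\alpha_0$ be least with $f \in M_{\alpha_0}$ and pick $\alpha_* \geq \alpha_0$ so large that for each $n$ some $c_\xi$ with $\xi < \alpha_*$ satisfies $c_\xi \cap f^{-1}(\{n\})$ finite, which is possible as the required sup is countable. Then $f$ satisfies the tentative hypothesis at every stage $\alpha \geq \alpha_*$. I claim $\delta_\alpha \in D_f$ for all such $\alpha$, which suffices since $\{\delta_\alpha : \alpha \geq \alpha_*\}$ is a club. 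Fix $\alpha \geq \alpha_*$ and $\gamma < \omega_1$; I must show both $c_\gamma \cap f^{-1}(a_{\delta_\alpha})$ and $c_\gamma \cap f^{-1}(\omega \setminus a_{\delta_\alpha})$ are infinite. If $\gamma < \alpha$ then $f, c_\gamma \in M_\alpha$, $f[c_\gamma]$ is infinite, and the club-splitting club for $f[c_\gamma]$ lies in $M_\alpha$ and so contains $\delta_\alpha$ by elementarity. If $\gamma = \alpha$ this is exactly task \emph{(iv)} at stage $\alpha$. If $\gamma > \alpha$, then $f^{-1}(a_{\delta_\alpha}) \in M_\gamma$, and by induction on $\gamma$ (combining the previous two subcases applied at each $\xi \leq \alpha$ with task \emph{(iii)} at each intermediate stage $\xi$ with $\alpha < \xi < \gamma$) one has $f^{-1}(a_{\delta_\alpha}) \in \HHH_\gamma^+$, so task \emph{(iii)} at stage $\gamma$ supplies the required intersections.

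The main obstacle is the edge case $\gamma = \alpha$: because $\delta_\alpha \notin M_\alpha$, the uniform positivity task \emph{(iii)} at stage $\alpha$ does not cover the sets $f^{-1}(a_{\delta_\alpha})$, and this is what forces the separate task \emph{(iv)}. Checking that \emph{(iv)} is consistent with the other demands is the technical heart of the argument, and it rests crucially on using elementarity of $M_\alpha$ together with the club-splitting property of $\bar A$ to establish, before $c_\alpha$ is chosen, that each $f^{-1}(a_{\delta_\alpha})$ is already $\HHH_\alpha$-positive.
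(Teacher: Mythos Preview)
Your argument is correct and takes a genuinely different route from the paper's proof, so let me compare the two.

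The paper proceeds by a direct recursion: it enumerates all partial finite-to-one functions $\{f_\beta : \beta < \omega_1\}$ and simultaneously builds the tower $(c_\alpha)$, a continuous increasing sequence $(\gamma_\alpha)$, and a decreasing sequence of clubs $(C_\alpha)$, maintaining the explicit invariant that $a_{\gamma_\delta}$ splits $f_\beta(c_\alpha)$ for all $\beta < \delta \leq \alpha$ and that every $a_\gamma$ with $\gamma \in C_\alpha$ does so as well. At the end, an arbitrary $f$ is reduced to a finite-to-one one using the P-filter property, and the witnessing club is the tail $\{\gamma_\delta : \delta > \beta\}$. Your approach replaces this explicit bookkeeping by a continuous chain of elementary submodels: the club is simply $\{\delta_\alpha : \alpha \geq \alpha_*\}$ with $\delta_\alpha = M_\alpha \cap \omega_1$, and the decreasing clubs $C_\alpha$ are absorbed into the single elementarity step ``a club in $M_\alpha$ contains $\delta_\alpha$''. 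Your tasks (iii) and (iv) correspond respectively to the paper's preservation of positivity for previously handled sets and to the clause ``$a_{\gamma_\alpha}$ splits $f_\beta(c_\alpha)$''. The key observation that the tentative hypothesis forces $f[c_\xi]$ to be infinite for every $\xi < \alpha$ (which you use to see that the targets of (iv) are $\HHH_\alpha$-positive) is exactly the content of the paper's final reduction to finite-to-one functions, just carried out locally at each stage rather than once at the end.

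One point to make precise: you need $c_\alpha \in M_{\alpha+1}$ for the inductive verification (so that $c_\gamma \in M_\alpha$ whenever $\gamma < \alpha$), but the construction of $c_\alpha$ uses the entire sequence $\langle c_\xi : \xi < \alpha\rangle$ and the countable set of targets, not just individual elements of $M_\alpha$. The standard fix is to arrange $\bar M \upharpoonright (\alpha+1) \in M_{\alpha+1}$ when building the chain and to make the recipe for $c_\alpha$ uniform in this initial segment; you should say this explicitly. With that clarified, your argument goes through and is a clean reformulation of the paper's proof via elementary submodels.
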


\begin{proof}
Let $\{ f_\alpha : \alpha < \omega_1 \}$ list all partial finite-to-one
functions $\omega \to \omega$.
Recursively we find $\sub^*$-decreasing $c_\alpha \in \omoms$, $\zeta_\alpha
\geq \alpha$,
continuous increasing $\gamma_\alpha$, and decreasing club sets $C_\alpha$ such
that
\begin{itemize}
\item $c_\alpha \sub^* b_{\zeta_\alpha}$ 
\item $\gamma_\alpha \in C_\alpha$
\end{itemize}
and for all $\beta < \alpha$ such that $\dom (f_\beta) \cap c_\beta$ is
infinite,
\begin{itemize}
\item $a_{\gamma_\delta}$ splits $f_\beta (c_\alpha)$ (i.e.
  $f_\beta^{-1} (a_{\gamma_\delta}) \cap c_\alpha$ and $f_\beta^{-1} (\omega
\sem a_{\gamma_\delta}) \cap c_\alpha$
  are both infinite) for $\beta < \delta \leq \alpha$
\item for all $\gamma \in C_\alpha$, $a_\gamma$ splits the sets $f_\beta
(c_\alpha)$.
\end{itemize}

Basic step: $c_0 = b_0$, $\zeta_0 = 0$, $\gamma_0 = 0$.

Successor step: $\alpha \to \alpha + 1$. Since $\bar B$ is tail-splitting, we
can find $\zeta_{\alpha +1} \geq \alpha + 1$ 
such that $b_{\zeta_{\alpha +1}}$ splits all sets $f_\beta^{-1}
(a_{\gamma_\delta}) \cap c_\alpha$
and $f_\beta^{-1} (\omega \sem a_{\gamma_\delta}) \cap c_\alpha$ for $\beta <
\delta \leq \alpha$,
as well as $\dom (f_\alpha) \cap c_\alpha$ if the latter set is infinite.
In particular, the intersection of $b_{\zeta_{\alpha +1}}$ with these sets is
infinite. Let $c_{\alpha + 1}
= c_\alpha \cap b_{\zeta_{\alpha +1}}$. Then $a_{\gamma_\delta}$ splits $f_\beta
(c_{\alpha +1})$
for $\beta < \delta < \alpha +1$. Since $\bar A$ is club-splitting, there is a
club set $C_{\alpha +1} \sub C_\alpha$
such that for all $\gamma \in C_{\alpha +1}$, $a_\gamma$ splits all sets
$f_\beta (c_{\alpha+1})$ for
$\beta < \alpha$, as well as $f_\alpha (c_{\alpha +1})$ in case $\dom (f_\alpha)
\cap c_\alpha$ is infinite.
Now let $\gamma_{\alpha +1}$ be the least element of $C_{\alpha +1}$ greater
than $\gamma_\alpha$.

Limit step: $\alpha$ limit. Let $C' =\bigcap \{ C_\beta : \beta < \alpha \}$. 
Let $\gamma_\alpha = \bigcup \{ \gamma_\beta : \beta < \alpha \}$. Clearly
$\gamma_\alpha \in C '$.
So $a_{\gamma_\alpha}$ splits all $f_\beta (c_\delta)$ where $\beta < \delta <
\alpha$. 
Construct $c'$ as a pseudo-intersection of $c_\delta$, $\delta < \alpha$, such
that
all $a_{\gamma_\delta}$ still split all $f_\beta (c')$ for $\beta < \delta \leq
\alpha$.

Since $\bar B$ is tail-splitting, we can find $\zeta_\alpha \geq \alpha$ such
that $b_{\zeta_\alpha}$ splits
all sets $f_\beta^{-1} (a_{\gamma_\delta}) \cap c'$
and $f_\beta^{-1} (\omega \sem a_{\gamma_\delta}) \cap c'$ for $\beta < \delta
\leq \alpha$.
Let $c_\alpha = c' \cap b_{\zeta_\alpha}$. 
Since $\bar A$ is club-splitting, we can find $C_\alpha \sub C'$ club with
$\gamma_\alpha \in C_\alpha$ and such that
for all $\gamma \in C_\alpha$, $a_\gamma$ splits the sets $f_\beta (c_\alpha)$.

This completes the recursive construction. We need to show that the $c_\alpha$
are as required.
Clearly, they generate a P-filter $\HHH$. Let $f: \omega \to \omega$ be a
partial function with
$\dom (f) \in \HHH^+$ and $f^{-1} (\{ n\}) \in \HHH^*$ for all $n$. Since $\HHH$
is a P-filter,
the sets $f^{-1} (\omega \sem n)$ have a pseudo-intersection $A \in \HHH$. Notice
that the restriction
of $f$ to $A$ is finite-to-one. So we may assume without loss of generality that
$f$ is finite-to-one.
Hence there is $\beta$ such that $f = f_\beta$. Since $\dom (f_\beta) \in
\HHH^+$, 
$\dom (f_\beta) \cap c_\beta$ is clearly infinite. By construction, for all
$\alpha > \beta$ and all
$\delta > \beta$, $a_{\gamma_\delta}$ splits $f_\beta (c_\alpha)$. Hence both
$f_\beta^{-1} (a_{\gamma_\delta})$
and $f_\beta^{-1} (\omega \sem a_{\gamma_\delta})$ are $\HHH$-positive. 
Thus the club set $D_f = D_{f_\beta} = \{ \gamma_\delta : \delta > \beta \}$ is
as required.
\end{proof}

We finally discuss an application of tail-splitting.

\begin{Def}
The {\em strong polarized partition relation} $\left(\begin{array}{l}\lambda \\
\kappa \\ \end{array}\right)
\to \left(\begin{array}{l}\lambda \\ \kappa \\ \end{array}\right)^{1,1}_2$ means
that
for every function $c : \lambda \times \kappa \to 2$ there are $A \sub \lambda$
and $B \sub \kappa$
of size $\lambda$ and $\kappa$, respectively, such that $c \re (A \times B)$ is
constant. 
\end{Def}
The following
was essentially observed by Garti and Shelah~\cite[Claim 1.3]{GSta1}, though
they stated this
in somewhat different language.

\begin{Obs}
The following are equivalent.
\begin{enumerate}
\item $\left(\begin{array}{l}\lambda \\ \omega \\ \end{array}\right)
  \to \left(\begin{array}{l}\lambda \\ \omega \\ \end{array}\right)^{1,1}_2$
\item $cf (\lambda) \neq \omega$ and there does not exist a tail-splitting
sequence of length $\lambda$.
\end{enumerate}
\end{Obs}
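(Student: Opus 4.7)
I plan to prove the two directions separately, with $(2) \Rightarrow (1)$ being the substantive direction.

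For $(1) \Rightarrow (2)$, I establish both halves of (2) by contrapositive. If $\cf(\lambda) = \omega$, fix a cofinal $\omega$-sequence $\lambda_n \nearrow \lambda$ and define $c(\alpha, n) = 0$ iff $\alpha < \lambda_n$. Then no monochromatic $A \times B$ with $|A| = \lambda$ and $B$ infinite can exist: constant value $0$ would force $A \sub \lambda_n$ for any $n \in B$, contradicting $|A| = \lambda$, while constant value $1$ would require, for any fixed $\alpha \in A$, that $\lambda_n \leq \alpha$ for all (infinitely many) $n \in B$, impossible since $\lambda_n \to \lambda > \alpha$. If instead $\bar A = \langle a_\alpha : \alpha < \lambda\rangle$ is tail-splitting, the coloring $c(\alpha, n) = 1$ iff $n \in a_\alpha$ admits no monochromatic rectangle: a monochromatic $A \times B$ forces either $B \sub a_\alpha$ or $B \cap a_\alpha = \emptyset$ for every $\alpha \in A$, so $a_\alpha$ fails to split $B$ for every $\alpha \in A$; but $A$ of size $\lambda$ is unbounded in $\lambda$, contradicting the tail-splitting property.

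For $(2) \Rightarrow (1)$, given $c \colon \lambda \times \omega \to 2$, define $a_\alpha = \{n : c(\alpha, n) = 1\}$ and partition $\lambda$ into $A_0, A_1, A_2$ according to whether $a_\alpha$ is finite, cofinite, or both $a_\alpha$ and $\omega \sem a_\alpha$ are infinite. Since $\cf(\lambda) > \omega$, at least one of $A_0, A_1, A_2$ has size $\lambda$. In case $|A_0| = \lambda$, write $A_0 = \bigcup_n \{\alpha : a_\alpha \sub n\}$ and apply $\cf(\lambda) > \omega$ (which rules out writing $\lambda$ as a union of countably many sets of smaller cardinality) to find $n$ with $A := \{\alpha : a_\alpha \sub n\}$ of size $\lambda$; then $B := \omega \sem n$ gives a $0$-monochromatic rectangle. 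The case $|A_1| = \lambda$ is symmetric. If $|A_2| = \lambda$, use that any size-$\lambda$ subset of $\lambda$ has order type $\lambda$ to reindex $A_2$ in its natural order as a sequence $\langle a_{\alpha_\xi} : \xi < \lambda\rangle$ of length $\lambda$; apply the no-tail-splitting hypothesis to obtain $b \in \omoms$ and an unbounded $E \sub \lambda$ on which $a_{\alpha_\xi}$ does not split $b$; partition $E$ by whether $b \cap a_{\alpha_\xi}$ or $b \sem a_{\alpha_\xi}$ is finite and further by the witnessing integer $n$; invoke $\cf(\lambda) > \omega$ once more to extract a size-$\lambda$ subset $A$ with common $n$; finally $B := b \sem n$ produces the desired monochromatic rectangle.

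The delicate point in the last case is ensuring $E$ has size $\lambda$ rather than merely being unbounded in $\lambda$. When $\lambda$ is regular this is automatic, since every unbounded subset of a regular cardinal has size equal to that cardinal. For singular $\lambda$ of uncountable cofinality, one must either iterate the reindexing-and-partition argument or first thin $A_2$ to a size-$\lambda$ subset on which the relevant witnessing data already stabilizes before appealing to the hypothesis; in either approach, $\cf(\lambda) > \omega$ is exactly what is needed to push a size-$\lambda$ set through countably many partitioning steps. The remaining bookkeeping is routine.
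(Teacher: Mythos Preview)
The paper offers no proof of this observation, merely citing Garti--Shelah. Your argument for $(1)\Rightarrow(2)$ and for the regular case of $(2)\Rightarrow(1)$ is correct and is the standard one.

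Your worry about singular $\lambda$ of uncountable cofinality is legitimate, but your proposed repairs do not work. You cannot ``thin $A_2$ first'' because the witnessing $b$ is produced only \emph{after} applying the no-tail-splitting hypothesis, so there is no data to stabilize beforehand. And once the unbounded set $E$ has size $<\lambda$, there is nothing to iterate on: the hypothesis speaks only about sequences of length exactly $\lambda$, and reindexing a shorter sequence gains nothing. In fact, for singular $\lambda$ the condition naturally equivalent to the failure of (1) is not ``there is a tail-splitting sequence'' but rather ``there is a sequence $\langle a_\alpha:\alpha<\lambda\rangle$ such that for every $b$, the set $\{\alpha:a_\alpha$ does not split $b\}$ has size $<\lambda$''; for regular $\lambda$ this coincides with tail-splitting, for singular $\lambda$ it is a priori weaker.

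Note, however, that in Definition~\ref{def:clubandtail} the paper only defines tail-splitting sequences of \emph{regular} length $\kappa$. Read accordingly, the observation concerns regular $\lambda$ (so ``$\cf(\lambda)\neq\omega$'' simply means $\lambda>\omega$), and in that case your proof is complete as it stands.
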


In particular, Garti and Shelah~\cite[Claim 1.4]{GS12} observed that $\ss >
\aleph_1$ implies that
$\left(\begin{array}{l}\omega_1 \\ \omega \\ \end{array}\right) 
\to \left(\begin{array}{l}\omega_1 \\ \omega \\ \end{array}\right)^{1,1}_2$
holds.
As a consequence of Theorem~\ref{tail-club}, we obtain:

\begin{Cor}
It is consistent that $\ss = \aleph_1$ and 
$\left(\begin{array}{l}\omega_1 \\ \omega \\ \end{array}\right)
\to \left(\begin{array}{l}\omega_1 \\ \omega \\ \end{array}\right)^{1,1}_2$
holds.
\end{Cor}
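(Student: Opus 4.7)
The plan is to combine Theorem~\ref{tail-club} with the Garti--Shelah observation stated immediately before the corollary. Theorem~\ref{tail-club} produces a model in which $\ss = \aleph_1$ and simultaneously there is no tail-splitting family of size $\aleph_1$. Since $\cf(\omega_1) = \omega_1 \neq \omega$, the equivalence in the observation, instantiated at $\lambda = \omega_1$, reduces to the single clause ``there is no tail-splitting sequence of length $\omega_1$.'' Hence in the model provided by Theorem~\ref{tail-club} the partition relation $\left(\begin{array}{l}\omega_1 \\ \omega \\ \end{array}\right) \to \left(\begin{array}{l}\omega_1 \\ \omega \\ \end{array}\right)^{1,1}_2$ holds, giving the corollary.

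Concretely, first I would start in a ground model of $\CH$ and carry out the forcing construction underlying Theorem~\ref{tail-club}: fix a club-splitting sequence $\bar A = \la a_\alpha : \alpha < \omega_1\ra$ existing under $\CH$, and perform a finite support ccc iteration of length $\omega_2$ (or whatever length is used in that theorem) of Laver-type forcings $\LL(\HHH)$, where at each stage $\HHH$ is built from a given tail-splitting candidate $\bar B$ as in the second lemma of Section~\ref{sec:clubandtail}. By the first lemma of that section, each iterand preserves $\bar A$, so $\bar A$ remains club-splitting in the final extension, giving $\ss = \aleph_1$; meanwhile the book-keeping ensures that every potential $\omega_1$-sequence $\bar B$ is killed as a tail-splitting family. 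This is exactly the content of Theorem~\ref{tail-club}, so I would simply invoke it.

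Then I would verify the polarized partition relation in this extension. Given any coloring $c : \omega_1 \times \omega \to 2$, one considers, for each $\alpha < \omega_1$, the set $b_\alpha = \{ n < \omega : c(\alpha,n) = 0\}$. Any potential obstruction to finding large homogeneous $A \times B$ is easily reformulated as a tail-splitting sequence (this is precisely the content of the Garti--Shelah equivalence), and since there is no such sequence in the model, one obtains $A \in [\omega_1]^{\omega_1}$ and $B \in [\omega]^\omega$ on which $c$ is constant. I would not reprove the equivalence, only appeal to it.

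The only conceptual obstacle is the equivalence itself, but it is stated (with attribution) in the observation just before the corollary, and therefore treated as a black box. The whole proof of the corollary is thus a one-line reduction, which I would phrase as: the model of Theorem~\ref{tail-club} satisfies $\ss = \aleph_1$ and has no tail-splitting family of size $\aleph_1$; since $\cf(\omega_1) = \omega_1$, the observation yields the polarized partition relation $\left(\begin{array}{l}\omega_1 \\ \omega \\ \end{array}\right) \to \left(\begin{array}{l}\omega_1 \\ \omega \\ \end{array}\right)^{1,1}_2$, as required.
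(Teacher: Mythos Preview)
Your proposal is correct and matches the paper's approach exactly: the paper simply presents the corollary with the phrase ``As a consequence of Theorem~\ref{tail-club}, we obtain,'' leaving the reader to combine that theorem with the Garti--Shelah observation just as you do. Your write-up is in fact more detailed than the paper's, but the logical content is identical.
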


This answers~\cite[Question 1.7(a)]{GSta2}.

\section{Open problems}

We conclude with a number of open problems. Perhaps the most interesting is:
\begin{Question}\label{s-versus-aclosed}
Does $\ss = \aleph_1$ (or at least $\ss_\omega = \aleph_1$) imply $\aa_\closed =
\aleph_1$? 
\end{Question}
While the existence of a tail-splitting sequence of length $\omega_1$ is
strictly stronger
than the existence of a club-splitting sequence of length $\omega_1$
(Theorem~\ref{tail-club}), 
we in fact do not know whether the latter is stronger than $\ss_\omega =
\aleph_1$ or $\ss = \aleph_1$.
\begin{Question}\label{versions-of-s}
Is it consistent that $\ss = \aleph_1$ (or even $\ss_\omega = \aleph_1$) and
there is no
club-splitting sequence of length $\omega_1$?
\end{Question}
For the proof of $\aa_\closed = \aleph_1$ we needed a club-splitting sequence of
partitions
(Lemma~\ref{splitting-over-models} and
Theorem~\ref{splitting-over-models-aclosed}).
It is unclear whether a club-splitting sequence is enough. In fact, we do not
know whether
the two notions are equivalent.
\begin{Question}\label{splitting-partitions}
Does the existence of a tail-splitting sequence of length $\kappa$ imply the
existence of a
tail-splitting sequence of partitions of length $\kappa$? Similarly for
club-splitting instead of
tail-splitting.
\end{Question}
Let $\aa_\Borel$ denote the size of the smallest family $\A$ a.d. Borel sets
such that 
$\bigcup \A$ is mad. Clearly, $\aleph_1 \leq \aa_\Borel \leq \aa_\closed$.
We do not know, however, whether the cardinals are equal.
\begin{Question}[Brendle and Khomskii~{\cite[Question 4.7]{BKta}}]
\label{aBorel}
Is $\aa_\Borel = \aa_\closed$?
\end{Question}
If this is not the case one could ask
\begin{Question}[Brendle and Khomskii~{\cite[Question 4.4]{BKta}}]
\label{b-versus-aBorel}
Is $\bb < \aa_\Borel$ consistent?
\end{Question}
Finally we address
\begin{Question}[see also~{\cite[Conjecture 4.5]{BKta}}]
Is $\hh \leq \aa_\closed$? Or even $\hh \leq \aa_\Borel$?
\end{Question}




\begin{thebibliography}{ABCD}
\bibitem[Ab]{Ab} U.~Abraham, \newblock Proper forcing. \newblock In {\em Handbook of set theory. {V}ols. 1, 2, 3}, pages 333--394. Springer, Dordrecht, 2010.
  
\bibitem[BD]{BD85} J. Baumgartner and P. Dordal,
{\em Adjoining dominating functions}, J. Symbolic Logic {\bf 50} (1985), 94-101.

\bibitem[Bl]{Bl}
A.~Blass, \emph{Selective ultrafilters and homogeneity}, Ann. Pure Appl. Logic
  \textbf{38} (1988), no.~3, 215-255.

\bibitem[Br]{Br98} J. Brendle,
{\em Mob families and mad families}, Arch. Math. Logic {\bf 37} (1998), 183-197.

\bibitem[BK]{BKta} J. Brendle and Y. Khomskii,
{\em Mad families constructed from perfect a.d. families}, preprint.

\bibitem[GS1]{GS12} S. Garti and S. Shelah,
{\em Strong polarized relations for the continuum}, Ann. Comb. {\bf 16} (2012),
271-276.

\bibitem[GS2]{GSta1} \bysame,
{\em Combinatorial aspects of the splitting number}, preprint

\bibitem[GS3]{GSta2} \bysame,
{\em Partition calculus and cardinal invariants}, preprint.

\bibitem[RS]{RS12} D. Raghavan and S. Shelah,
{\em Comparing the closed almost disjointness and dominating numbers},
Fund. Math. {\bf 217} (2012), 73-81.

\bibitem[Sh1]{b<s}
S.~Shelah, \emph{On cardinal invariants of the continuum}, Axiomatic set theory
  ({B}oulder, {C}olo., 1983), Contemp. Math., vol.~31, Amer. Math. Soc.,
  Providence, RI, 1984, pp.~183-207.

\bibitem[Sh2]{PIF}
\bysame, \emph{Proper and improper forcing}, second ed., Perspectives in
  Mathematical Logic, Springer-Verlag, Berlin, 1998.


\end{thebibliography}
\end{document}